\def\rightangle{\vcenter{\hsize5.5pt
    \hbox to5.5pt{\vrule height7pt\hfill}
    \hrule}}
\numberwithin{equation}{section}
\newtheorem{thm}{Theorem}[section]
\newtheorem{defn}[thm]{Definition}
\newtheorem{cor}[thm]{Corollary}
\newtheorem{lemma}[thm]{Lemma}
\newtheorem{rmrk}[thm]{Remark}
\newtheorem{crl}[thm]{Corollary}
\newtheorem{exmp}[thm]{Example}
\newcommand{\R}{\mathbb{R}}
\newtheorem{prop}[thm]{Proposition}
\newcommand{\abs}[1]{\left\vert{#1}\right\vert}
\newcommand{\ba}{\begin{array}}
\newcommand{\ea}{\end{array}}
\newcommand{\bthm}{\begin{thm}}
\newcommand{\ethm}{\end{thm}}
\newcommand{\bstp}{\begin{stp}}
\newcommand{\estp}{\end{stp}}
\newcommand{\blemma}{\begin{lemma}}
\newcommand{\elemma}{\end{lemma}}
\newcommand{\bprop}{\begin{prop}}
\newcommand{\eprop}{\end{prop}}
\newcommand{\bpf}{\begin{pf}}
\newcommand{\epf}{\end{pf}}
\newcommand{\bdefn}{\begin{defn}}
\newcommand{\dH}{d\mathcal{H}}
\newcommand{\edefn}{\end{defn}}
\newcommand{\brk}{\begin{rmrk}}
\newcommand{\erk}{\end{rmrk}}
\newcommand{\bcrl}{\begin{crl}}
\newcommand{\ecrl}{\end{crl}}
\newcommand{\beg}{\begin{exmp}}
\newcommand{\eeg}{\end{exmp}}
\newcommand{\norm}[1]{\left\|#1\right\|}
\newcommand{\beqn}{\begin{equation}}
\newcommand{\eeqn}{\end{equation}}
\renewcommand{\leq}{\leqslant}
\renewcommand{\geq}{\geqslant}
\newcommand{\lm}{\lambda}
\newcommand{\mH}{\mathcal{H}}
\newcommand{\rec}{\mathcal{R}}
\newcommand{\beq}{\begin{equation}}
\newcommand{\eeq}{\end{equation}}
\newcommand{\bea}{\begin{eqnarray}}
\newcommand{\eea}{\end{eqnarray}}
\newcommand{\sh}{\mathcal{H}}
\newcommand{\sA}{\mathcal{A}}
\def\({\left(}
\def\){\right)}
\title{}
\author
\date
\begin{document}

\Large
 \begin{center}
\Large{Allen-Cahn Solutions with Triple Junction Structure at Infinity}\\ 

\hspace{10pt}

\large
\'Etienne Sandier$^1$, Peter Sternberg$^2$ \\

\hspace{10pt}

\small  
$^1$) Universit\'e Paris-Est Cr\'eteil \\
sandier@u-pec.fr
\\
$^2$) Department of Mathematics, Indiana University, Bloomington \\
sternber@iu.edu

\end{center}

\hspace{10pt}

\normalsize

\begin{abstract}
We construct an entire solution $U:\R^2\to\R^2$ to the elliptic system
\[
\Delta U=\nabla_uW(U),
\]
where $W:\R^2\to [0,\infty)$ is a 
`triple-well' potential. This solution is a local minimizer of the associated energy
\[
\int \frac{1}{2}\abs{\nabla U}^2+W(U)\,dx
\]
in the sense that $U$ minimizes the energy on any compact set among competitors agreeing with $U$ outside that set. Furthermore, we show that along subsequences, the `blowdowns' of $U$ given by $U_R(x):=U(Rx)$ approach a minimal triple junction as $R\to\infty$. Previous results had assumed various levels of symmetry for the potential and had not established local minimality, but here we make no such symmetry assumptions. 
\end{abstract}

\section{Introduction}
We will construct an entire solution $U:\R^2\to\R^2$ to the system
\begin{equation}
\Delta U=\nabla_uW(U),\label{PDE}
\end{equation}
which is minimizing on compact sets with respect to the associated energy
\[
E(u) =\int \frac{1}{2}\abs{\nabla u}^2+W(u)\,dx,
\]
where $W:\R^2\to [0,\infty)$ is a $C^2$
`triple-well' potential. That is, we assume that
\[\{p\in \R^2: W(p)=0\}=P:=\{p_1,p_2,p_3\},\]
and we assume non-degeneracy of the potential wells in the sense that
\begin{equation}
D^2 W(p_\ell)\geq b I\quad\mbox{for}\,\ell=1,2,3\;\mbox{for some}\;b>0,\;\mbox{where}\;I\;\mbox{is the}\;2\times 2\;\mbox{identity matrix.}\label{posdef}
\end{equation}

Additionally, we assume that for some $M>0$,
\begin{equation}
p\cdot \nabla W(p)\geq 0\quad\mbox{for}\;\abs{p}\geq M.
\label{Winfin}
\end{equation}

As in many studies of vector Allen-Cahn, we will make extensive use of the following degenerate Riemannian
metric on $\R^{2}$: 
\begin{equation}
d(p,q):=\inf\left\{\sqrt{2}\int_{0}^{1}W^{1/2}(\gamma(t))
\abs{\gamma'(t)}\;dt:\gamma\in C^{1}([0,1],\R^{2}),
\;\gamma(0)=p,\;\gamma(1)=q\right\},\label{Wdist}
\end{equation}
and we denote by $c_{ij}:=d(p_i,p_j)$ for $i\not=j$.
We will assume that the strict triangle inequality holds between the wells $p_1,p_2$ and $p_3$:
\begin{equation}
c_{12}<c_{13}+c_{23},\quad c_{13}<c_{12}+c_{23}\quad\mbox{and}\;c_{23}<c_{13}+c_{12}.\label{triangle}
\end{equation}
Under these assumptions, for $1\leq i<j\leq 3$ there exists at least one length-minimizing geodesic $\zeta_{ij}$ joining $p_i$ to $p_j$, see e.g. \cite{AF1,MS,SZun}. We will make the generic assumption that there is a {\it unique} such geodesic for each $i,j\in {1,2,3},\;i\not=j,$ though perhaps this can be relaxed. 

We also note that an equivalent variational description of the $c_{ij}$'s is given by
\beq
c_{ij}=\inf\left\{\int_{-\infty}^{\infty}W(f(t))+\frac{1}{2}
\abs{f'(t)}^2\;dt:\;f\in H^{1}_{loc}(\R,\R^{2}),
\;f(-\infty)=p_{i},\;f(\infty)=p_{j}\right\}.\label{other}
\eeq
Under an appropriate parametrization, we then find that each $\zeta_{ij}:\R\to\R^2$ satisfies the system
\beq
\zeta_{ij}''(t)=\nabla_uW(\zeta_{ij}(t))\quad\mbox{for}\;-\infty<t<\infty,\;\zeta_{ij}(-\infty)=p_i,\;\zeta_{ij}(\infty)=p_j.\label{hetero}
\eeq 
 From the perspective of ODE's, these geodesics $\zeta_{ij}$ represent heteroclinic connections between the potential wells.

We now denote by $\mathcal{A}$ the set of all functions $u^*:\R^2\to\R^2$ taking the form
 \beq
 u^*(x)=\left\{\begin{matrix} p_1&\;\mbox{on}\;S_1\\
 p_2&\mbox{on}\;S_2\\
 p_3&\mbox{on}\;S_3,\end{matrix}\right.\label{best}
 \eeq 
 where for $\ell=1,2$ and $3$, $S_\ell$ is a single (infinite) sector emanating from the origin with the three opening angles $\alpha_\ell$ given by
 \begin{equation}
\frac{\sin(\alpha_{1})}{c_{23}}=\frac{\sin(\alpha_{2})}
{c_{13}}=\frac{\sin(\alpha_{3})}{c_{12}}.\label{tj}
\end{equation}
See Figure \ref{triad}.

\begin{figure}[H]
	\centering
	\includegraphics[width = 0.6\textwidth]{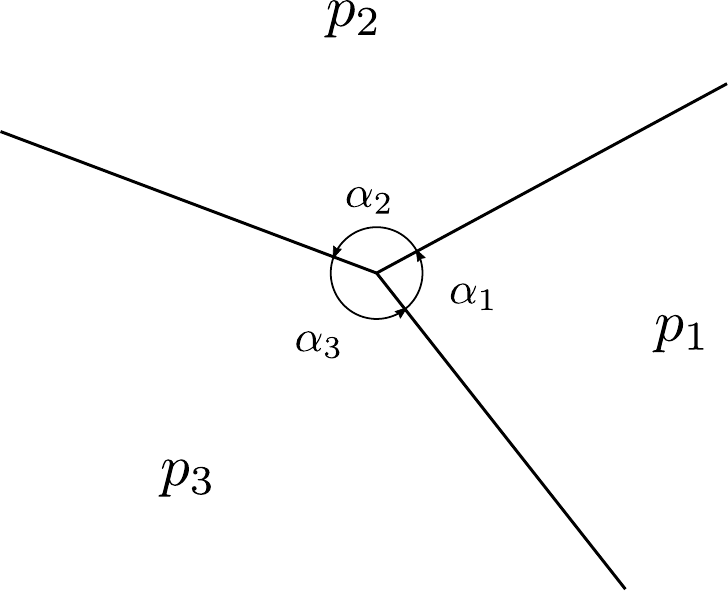}
	\caption{A locally minimizing partition of $\R^2$ with a triple junction.}
	\label{triad}
\end{figure}

The partition $\{S_1,S_2,S_3\}$ represents a locally minimizing partition of $\R^2$ with respect to the weighted perimeter functional 
 \beq
\{S_1,S_2,S_3\}\mapsto \sum_{1\leq i<j\leq 3}c_{ij}\mH^1\left(\partial S_i\cap \partial S_j\right),\label{fred}
 \eeq
 where $\mH^1$ refers to one-dimensional Hausdorff measure,
 and the condition \eqref{tj}  naturally arises as a criticality condition.
As we will recall in Section \ref{entire}, this partitioning problem represents the $\Gamma$-limit of a scaled version of the energy $E$, namely $E_R(u,\Omega)$, defined for any planar domain $\Omega$, any $R>0$ and any $u\in H^1(\Omega\;\R^2)$, via
\begin{equation}
E_R(u,\Omega)=\int_{\Omega}RW(u)
+\frac{1}{2R}\abs{\nabla u}^{2}\;dx.\label{ACenergy}
\end{equation}
We will write simply $E(u,\Omega)$ when referring to $E_1$ (i.e. $R=1$).
 
We will establish a connection between the structure at infinity of our entire solution $U$ to \eqref{PDE} and the triple junction partitions given by \eqref{best} by studying the asymptotic behavior of the blowdowns of $U$.

Our main result is the following:\\
\begin{thm} \label{main}There exists an entire solution $U:\R^2\to\R^2$ to 
\begin{equation}
\Delta U=\nabla_uW(U)\label{PDE1}
\end{equation}
which is a local minimizer of energy in the sense that 
for every compact set $K\subset\R^2$ and for every $v\in H^1_{loc}(\R^2;\R^2)$ satisfying $v=U$ on $\R^2\setminus K$ one has
\beq
E(U,K)\leq E(v,K).\label{italian}
\eeq
Furthermore, defining $U_R$ as the blowdown of $U$ via 
\beq
U_R(x):=U(Rx),\label{bd}
\eeq
we have that on any compact set $K\subset\R^2$:
\beq
{\rm{dist}}_{L^1(K;\R^2)}\left(U_R,\mathcal{A}\right)\to 0\;\mbox{as}\;R\to\infty.\label{wehope}
\eeq
That is,
\[
\lim_{R\to\infty}\left(\inf_{u^*\in\mathcal{A}}\norm{U_R-u^*}_{L^1(K;\R^2)}\right)= 0.
\]
\end{thm}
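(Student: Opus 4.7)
The plan is to realize $U$ as a $C^2_{\mathrm{loc}}$-limit of Dirichlet minimizers on expanding balls, imposing boundary data that already carry the triple-junction structure, and then to promote this into the local minimality \eqref{italian}. Fix three rays from the origin with opening angles $\alpha_\ell$ as in \eqref{tj}, and for each $R\ge 1$ build a competitor $v_R:\R^2\to\R^2$ that equals $p_\ell$ in most of the $\ell$-th sector $S_\ell$ and, across each separating ray, transitions by a tubular insertion of the heteroclinic $\zeta_{ij}$ from \eqref{hetero}, with a smooth cap near the origin. The variational characterization \eqref{other} delivers the upper bound $E(v_R,B_R)\le \sigma R+C_0$, where $\sigma:=c_{12}+c_{13}+c_{23}$ and $C_0$ absorbs the central cap.

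Let $u_R$ minimize $E(\cdot,B_R)$ over $\{u\in H^1(B_R;\R^2):u=v_R\text{ on }\partial B_R\}$; existence is by the direct method, the bound $|u_R|\le M$ follows from \eqref{Winfin} via sub-harmonicity of $|u_R|^2$ on $\{|u_R|>M\}$, and uniform $C^{2,\alpha}_{\mathrm{loc}}$ estimates from standard elliptic regularity for \eqref{PDE1}. A diagonal extraction produces a subsequence $u_{R_n}\to U$ in $C^2_{\mathrm{loc}}(\R^2)$, and $U$ solves \eqref{PDE1} on the whole plane.

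To verify \eqref{italian}, fix $K$ compact and $v\in H^1_{\mathrm{loc}}(\R^2;\R^2)$ with $v=U$ on $\R^2\setminus K$ and $E(v,K)<\infty$. Choose $r$ with $K\subset B_r$ and, for $n$ large enough that $B_{r+1}\subset B_{R_n}$, a smooth cutoff $\eta$ supported in the annulus $A=B_{r+1}\setminus B_r$ and equal to $1$ near $\partial B_{r+1}$. The test function $\tilde v_n:=v$ on $B_r$, $\tilde v_n:=\eta u_{R_n}+(1-\eta)v$ on $A$, $\tilde v_n:=u_{R_n}$ on $B_{R_n}\setminus B_{r+1}$ agrees with $v_{R_n}$ on $\partial B_{R_n}$. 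Minimality of $u_{R_n}$, after canceling equal contributions on $B_{R_n}\setminus B_{r+1}$ and on $B_r\setminus K$ (where $v=U$), reduces to
$$
E(u_{R_n},K)+E(u_{R_n},A)\le E(v,K)+E(\tilde v_n,A)+o(1).
$$
Since $v=U$ on $A$, one has $\tilde v_n-u_{R_n}=(1-\eta)(U-u_{R_n})$ there, so $C^2$-convergence on $A$ drives both annular terms to the common limit $E(U,A)$; similarly $E(u_{R_n},K)\to E(U,K)$. Passing $n\to\infty$ yields \eqref{italian}.

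For \eqref{wehope}, applying \eqref{italian} with the competitor built by gluing $v_R$ inside $B_R$ to $U$ outside $B_{R+1}$ (via a smooth cutoff in the thin annulus $B_{R+1}\setminus B_R$, whose uniform $C^1$-bounds on $U$ and $v_R$ keep the interpolation cost $O(R)$) gives $E(U,B_R)\le CR$ for some $C$ and all $R\ge 1$. The scaling identity $E(U,B_R)=R\,E_R(U_R,B_1)$, with $U_R$ the blowdown \eqref{bd}, then yields $E_R(U_R,B_1)\le C$ uniformly. Standard compactness for $E_R$ (in the spirit of Baldo and Fonseca--Tartar) extracts from any $R_k\to\infty$ an $L^1(B_1)$ subsequential limit $u^*$ with $u^*\in\{p_1,p_2,p_3\}$ a.e.\ whose partition has finite weighted perimeter \eqref{fred}. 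Since \eqref{italian} is preserved by the rescaling $U\mapsto U_R$, each $U_R$ is itself a local minimizer of $E_R$, so the usual coupling of $\Gamma$-convergence with minimality makes $u^*$ a local minimizer of \eqref{fred} on $\R^2$; combined with the classification of locally minimizing three-phase partitions under the strict triangle inequality \eqref{triangle} (to be carried out in Section \ref{entire}), this forces $u^*\in\mathcal A$. The main obstacle throughout is the passage from Dirichlet-minimality on $B_{R_n}$ to genuine local minimality on $\R^2$: the annular interpolation succeeds only because one has strong ($C^2$) convergence $u_{R_n}\to U$ on $A$ while $v=U$ there, and securing this convergence together with the linear-in-$R$ upper bound on $E(U,B_R)$ needed to rule out degenerate blowdowns is the technical heart of the argument.
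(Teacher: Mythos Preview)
Your construction of $U$ as a $C^2_{\mathrm{loc}}$-limit of Dirichlet minimizers on balls does produce an entire local minimizer in the sense of \eqref{italian}, but nothing in your argument prevents $U$ from being a constant $p_\ell$ or a one-dimensional heteroclinic $x\mapsto\zeta_{ij}(x\cdot\nu)$---both are entire local minimizers of $E$, and both can arise as legitimate $C^2_{\mathrm{loc}}$-limits of your sequence $u_{R_n}$ (the transition layers in $u_{R_n}$ could drift to infinity, or two of the three phases could be squeezed out near the origin). The paper's construction in Section~\ref{entire} avoids this by blowing up an $L^1$-local minimizer of $E_R$ on a fixed nonconvex domain about a carefully chosen point $x_R$ where the minimizer takes a value \emph{inside} the Jordan curve $\Lambda$ formed by the three heteroclinics (Lemma~\ref{centered}, a winding-number argument); this topological obstruction survives the limit and yields the crucial condition ${\rm dist}(U(0),\Lambda)>0$ of \eqref{origin}.

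Your blowdown argument has two further gaps. First, $\Gamma$-convergence gives that any subsequential limit $u^*$ is a local minimizer of the partitioning energy \eqref{fred}, but does \emph{not} show that $u^*$ is a cone; the paper proves this separately via a Pohozaev identity combined with a quantitative equipartition-of-energy estimate (Proposition~\ref{equipartition}), which is the technical heart of the argument and rests on the two partitioning results of Section~\ref{lastpf}. Second, and more fundamentally, even among locally minimizing cones the trichotomy of Proposition~\ref{trichotomy} allows $u^*$ to be a constant, a half-plane, or a triple junction---all three are locally minimizing for \eqref{fred}. Eliminating the first two possibilities is exactly where the condition ${\rm dist}(U(0),\Lambda)>0$ is spent (Section~\ref{halfplane}, Proposition~\ref{third}), and your construction provides no substitute for it. The ``classification of locally minimizing three-phase partitions'' you invoke cannot, by itself, force $u^*\in\mathcal A$.
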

\begin{rmrk}
We believe that a stronger conclusion holds, namely that there exists a $u^*\in\mathcal{A}$ such that    
\[
\lim_{R\to\infty}\norm{U_R-u^*}_{L^1(K;\R^2)}= 0.
\]
\end{rmrk}

A step in the proof of the above theorem is the following result, of independent interest.

\begin{thm} \label{halfplanes} Assume  $U:\R^2\to\R^2$ is an entire solution to 
\eqref{PDE1} which is a local minimizer of energy such that for some sequence $R_j\to +\infty$, the sequence $U_{R_j}$ converges locally in $L^1$ to the function 
\beq\label{thehalf} u_0(x_1,x_2)=\left\{\begin{matrix} p_i&\;\text{if $x_2<0$.}\\
 p_j&\text{if $x_2>0$.}\end{matrix}\right.,\eeq
 for some pair $p_i\not=p_j$.
Then $U(x_1,x_2) = \zeta_{ij}(x_2+\Delta)$, for some $\Delta\in\R$.
\end{thm}

To place these results in context, we note that there is a large, and growing, collection of work on the general topic of finding entire solutions $u:\R^n\to\R^m$ to the vector Allen-Cahn system under various assumptions on the potential $W:\R^m\to\R$, on $n$ and on $m$. See, for example \cite{ABG,AF2,BFS2,BFS1,BGS,GS}. A source for a number of these results is the book \cite{AFS}. Most of these results, however, include some form of symmetry assumption on $W$. We also mention the recent work \cite{Bethuel} addressing concentration of general vector-valued critical points of Allen-Cahn in the plane.

Regarding the case under consideration here, namely $n=m=2$ and $W$ a triple well potential, an important
 first result on entire solutions appears in \cite{BGS}, where the authors assume the potential is equivariant by the symmetry group of the equilateral triangle. The convergence to the minimal triple junction partition \eqref{best}-\eqref{tj} they achieve under these symmetry assumptions (with necessarily each $\alpha_\ell=2\pi/3$) is much stronger than \eqref{wehope}. In particular, they show that
\[
\lim_{t\to\infty} U\left(t\frac{x}{\abs{x}}\right)=p_\ell\;\mbox{for}\;x\in S_\ell\;\mbox{off of the three rays}\;\partial S_k\cap\partial S_\ell\;\mbox{for}\;1\leq k<\ell\leq 3.
\]
On the other hand, since they work within the class of equivariant competitors, there is no claim of stability with respect to general perturbations. In a more recent contribution to this problem \cite{Fusco}, the symmetry assumptions on $W$ are weakened to include only the rotation subgroup of the full symmetry group of the equilateral triangle, thus relaxing the assumption of reflectional symmetry.

Our proof of Theorem \ref{main} proceeds by first appealing to \cite{SZi} to construct a sequence of $L^1$-local minimizers of $E_R$ on a particular nonconvex bounded domain, cf. Theorem \ref{Ziemer}. The candidate for our entire solution arises through a blow-up of this sequence, but care is needed here to execute the blow-up about a point where the local minimizers take a value far from the three heteroclinics.  This analysis is carried out in Section \ref{entire}, culminating in Proposition \ref{bigU}, where the blow-up limit $U$ is shown to be an entire, locally minimizing solution to \eqref{PDE} that avoids the three heteroclinics at the origin.

The next step involves an analysis of the blowdowns of any local minimizer $U$ in the sense of \eqref{italian}. Here we invoke the machinery of $\Gamma$-convergence, including an identification of the $\Gamma$-limit for vector Allen-Cahn subject to a Dirichlet condition carried out recently in \cite{Gaz}. We argue in Proposition \ref{compactness} that, up to passing to subsequences, these blowdowns converge to an $L^1$-local minimizer $u_0$ of the $\Gamma$-limit given in \eqref{Ezero}, which takes the form of a partitioning problem . 

The crucial estimate in our blowdown analysis comes in the form of an asymptotic equipartition of energy of any local minimizer, namely
\beq
\int_{B_{R}}\left(W(U)+\frac{1}{2}\abs{\nabla U}^2-\sqrt{2}\sqrt{W(U)}\abs{\nabla U}\right)\,dx<C_2R^{1-\alpha}\quad\mbox{for}\;R\gg 1,\label{eqpart}
\eeq
where $B_R$ is the disc of radius $R$ centered at the origin, and $C_2>0$ and $\alpha\in (0,1)$ are constants independent of $R$. This is established in Proposition \ref{equipartition}. The proof utilizes the regularity theory for the partitioning of a ball into three sets subject to a Dirichlet condition to obtain an upper bound on the energy, as well as a comparison between the infimum of such a partitioning problem and a related, less standard, partitioning problem described below  \eqref{introDir} to obtain a matching lower bound. We appeal to the regularity theory for both problems as recently presented in \cite{MNov}. We note that in \cite{Bethuel}
there appear other results on the asymptotic behavior of the `discrepancy measure,' that is, the integrand of \eqref{eqpart}, but these have a different nature given that they are derived only for critical points, not local minimizers, of the energy $E$.

From \eqref{eqpart} and a Pohozaev identity, we are able to establish the convergence of the blowdowns to a minimal cone via Lemmas \ref{limexist} and \ref{radrad} and Theorem \ref{RadialBD}. It is then simple to conclude that one of three limits must arise: either (i) the minimal cone is $\R^2$, i.e. $u_0=p_\ell$ for some $p_\ell\in P$, (ii) the minimal cone is a half-space, i.e. $u_0=p_i$ and $p_j$ for $i\not=j$ on either side of a line, or (iii) the minimal cone is given by three sectors satisfying \eqref{tj} so that $u_0$ is given by \eqref{best}, cf. Proposition \ref{trichotomy}.

Eliminating possibility (i) is easy, but eliminating (ii)--which roughly corresponds to arguing that at infinity, the entire solution $U$ does not look like a heteroclinic--is much more delicate. This is the content of Section \ref{halfplane}. The proof is by contradiction. We first obtain an upper bound for the energy that corresponds to the cost of a heteroclinic. Then we obtain a contradictory lower bound using crucially that $U$ was constructed in such a way that $U(0)$ is far from the three heteroclinics.

In Section \ref{lastpf}, we compare the two partitioning problems that emerge in our proof of \eqref{eqpart}. As one is somewhat non-standard, we hope this section will be of independent interest. The first problem involves the minimization of the partitioning functional
\beq
(S_1,S_2,S_3)\mapsto t_1\mH^1(\partial S_1\cap B)+t_2\mH^1(\partial S_2\cap B)+t_3\mH^1(\partial S_3\cap B),\label{joe}
\eeq
where $B\subset\R^2$ is a ball, $t_1,t_2$ and $t_3$ are positive numbers, and the admissible competitors $(S_1,S_2,S_3)$ are all partitions of $B$ satisfying a Dirichlet condition 
\beq
\partial S_\ell\cap\partial B=f^{-1}(p_\ell)\quad\mbox{for}\;\ell=1,2,3\quad\mbox{and}\;f\in BV(\partial B;P).\label{introDir}
\eeq
 (In the present context of Allen-Cahn, the coefficients $t_\ell$ are related to the constants $c_{ij}$ via \eqref{t123}, making \eqref{joe} equivalent to \eqref{fred}.) 
For the second problem, one fixes any number $\delta>0$ and then minimizes the same 
partitioning functional \eqref{joe} among triples $(S_1,S_2,S_3)$ of disjoint subsets of $B$ again subject to \eqref{introDir}, but now under the more relaxed condition that
$\abs{B\setminus \cup_\ell S_\ell}\leq \delta$. In other words, the competitors only need to ``almost partition" the ball. In Theorem \ref{almostpartitions}, we prove that the infimum of the second, more relaxed problem cannot lie more than $O(\delta^{1/2})$ below the infimum of the first problem.

In a personal communication in October of 2023, Nick Alikakos brought to our attention that he and Zhiyuan Geng were working on the same type of result. Their efforts eventually led to \cite{AZ2} and \cite{AZ}. They obtain the same conclusion as that of our Theorem \ref{main}, along with information about the proximity of the entire solution to the three potential wells along sequences of points going to infinity. The methods are quite different, with their result on convergence of blowdowns relying on a characterization of minimizing planar partitions into three sets, see Remark \ref{alternateproof}. As described above, our approach involves a new result on asymptotic equipartition of energy for local minimizers, along with the analysis of the rather novel geometry problem of ``almost partitions."

\section{Construction of a candidate for the entire solution}\label{entire}

Throughout this article, we will denote by $B_r(x)$ the ball in $\R^2$ of radius $r$
 and center $x$, unless the center is the origin, in which case we will simply write $B_r$.

\subsection{$\Gamma$-convergence results}
Our approach in this article will at times invoke $\Gamma$-convergence results relating the energy $E_R(u,\Omega)$ from \eqref{ACenergy} for a bounded domain $\Omega\subset\R^2$ to the functional
\beq
E_0(u,\Omega) := \sum_{1\leq i<j\leq 3}c_{ij}\sh^{1}(\partial^*S_{i}\cap\partial^*S_{j} \cap \Omega),\label{ez}
\eeq
where $S_{j}:=u^{-1}(p_{j})$ for $j=1,2,3$, and $\partial^*S$ refers to the reduced boundary of a set
$S$ of finite perimeter, cf. \cite{giusti}.

Building on previous $\Gamma$-convergence results for vector Modica-Mortola in the double-well case, e.g. \cite{FT, PSARMA, PSRocky}, the $\Gamma$-convergence of $\{E_R(\cdot,\Omega)\}$ to $E_0(\cdot,\Omega)$ for bounded $\Omega\subset\R^n$ in the setting of a multi-well potential and in the topology $L^1(\Omega;\R^n)$ is established in \cite{baldo}. 

We will also require a generalization of this $\Gamma$-convergence result to the situation where a Dirichlet condition is specified on $\partial \Omega$. Modica-Mortola type results that accommodate a Dirichlet condition appear in \cite{ORS} in the scalar setting and in \cite{Novack} in the context of the closely related Landau-deGennes energy. For our setting, however, we point to the recent result in \cite{Gaz}. For our purposes, it will suffice to state it for any bounded planar domain $\Omega$ with smooth boundary and for Dirichlet data taking values in the potential wells, though it holds more generally. To this end, let $h\in BV(\partial \Omega;P)$ and consider any sequence $\{h^R\}\subset H^1(\partial \Omega;\R^2)$ such that
\beq
  \abs{\partial_s h^R}\leq CR\;\mbox{for some}\;C>0\quad\mbox{and}\quad  h^R\to h\;\mbox{in}\; L^1(\partial \Omega;\R^2)\;\mbox{as}\; R\to\infty,\label{gamcon}
\eeq and such that
\beq
\int_{\partial \Omega} RW(h^R)
+\frac{1}{2R}\abs{\partial_s h^R}^{2}\;\dH^1<C\quad\mbox{for some constant}\;C\;\mbox{independent of}\;R.\label{DirData}
\eeq
Next define
\begin{align}
\tilde{E}_R(u,\Omega) := \left\{ 
\begin{array}{cc}
\displaystyle E_R(u,\Omega)
 &\mbox{if}\quad u\in H^1(\Omega;\R^2),\; u=h^R\;\mbox{on}\;\partial \Omega,\\ \\
+\infty & \mbox{ otherwise},
\end{array}
\right.
\end{align}
and define
\begin{align}
E^h_0(u,\Omega) := \left\{ 
\begin{array}{cc}
\displaystyle  \sqrt{2}\,E_0(u,\Omega)\,+\sum_{j=1}^3 \int_{S_j\cap\partial \Omega}d\left(p_j,h(x)\right)\,\dH^1& \mbox{if}\quad u\in BV(\Omega;P),\\ \\
+\infty & \mbox{ otherwise,}\label{Ezero}
\end{array}
\right.
\end{align}
where $E_0$ is defined in \eqref{ez} and $d(\cdot,\cdot)$ is given by \eqref{Wdist}.
Then we have \\
\noindent
\bthm\label{Gazoulis} (\cite{Gaz})
Assume $\{h^R\}$ satisfies \eqref{gamcon} and \eqref{DirData}. Then, as $R\to\infty$,
 the sequence
$\{\tilde{E}_R(\cdot,\Omega)\}$ has the $L^1-\Gamma$-limit $E^h_{0}(\cdot,\Omega)$. That is, for every $u\in L^1(\Omega;\R^2)$ we have the following two conditions:\\
(i) (Lower-semi-continuity) If $\{v_R\}\subset L^1(\Omega;\R^2)$ is any sequence converging to $u$ in $L^1$ then
\beq
\liminf_{R\to\infty}\tilde{E}_R(v_R,\Omega)\geq E^h_0(u),\label{lsc1}
\eeq
and\\
(ii) (Recovery sequence) There exists a sequence  $\{V_R\}\subset L^1(\Omega;\R^2)$  converging to $u$ in $L^1$ such that
\beq
\lim_{R\to\infty}\tilde{E}_R(V_R,\Omega)=E^h_0(u).\label{reco}
\eeq

\ethm

 \subsection{Construction of the entire solution via blow-up}
 
Our candidate for an entire solution satisfying Theorem \ref{main} will be constructed through a blow-up process, starting from an $L^1$-local minimizer of $E_R(\cdot,\Omega)$ for a particular choice of $\Omega$.
This local minimizer is, in turn, constructed in \cite{SZi} using $\Gamma$-convergence techniques. 

To place ourselves in the setting of \cite{SZi}, we fix any $u^*\in\mathcal{A}$ given by \eqref{best} and let $x_1,x_2$ and $x_3$ be the three points on $\partial B_1$ where the three phase boundaries  hit the unit circle. Then let $\Omega\subset \R^2$ be any bounded, simply connected open set containing $B_1$ such that $\partial\Omega$ is smooth and $\partial \Omega\cap\partial B_1=\{x_1,x_2,x_3\}$. Finally, assume that $\partial \Omega$ is strictly concave at these three points. See Figure \ref{Ziemerlocalmin}

Under these assumptions on $\Omega$, the following theorem is proven in \cite{SZi}, utilizing the local minimizer property associated with $\Gamma$-convergence established in \cite{KS}.

\begin{thm}\label{Ziemer}
 For $\Omega\subset\R^2$ as described above there exists a number $\delta_0>0$ such that for all $R$ sufficiently large, there exists an $L^1$-local minimizer $u_R$ of $E_R(\cdot,\Omega)$ in the sense that   
 \begin{equation}
E_R(u_R,\Omega)\leq E_R(v,\Omega)\quad\quad\mbox{provided}\;\norm{v-u_R}_{L^1(\Omega)}\leq\delta_0.\label{Rlocalmin}
\end{equation}
 Furthermore,
 \beq
u_R\to u^*\;\mbox{in}\; L^1(\Omega)\label{L1wp}
\eeq
and
\beq
E_R(u_R,\Omega)\to E_0(u^*,\Omega).\label{zlimit}
\eeq
Necessarily, such a local minimizer satisfies the Euler-Lagrange equation associated with $E_R$, namely
\begin{equation}
\frac{1}{R^2}\Delta u_R=\nabla_uW(u_R)\quad\mbox{in}\;\Omega,\label{vRPDE}
\end{equation}
along with homogeneous Neumann boundary conditions on $\partial\Omega$.
\end{thm}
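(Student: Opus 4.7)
The plan is to apply the Kohn--Sternberg transfer principle of \cite{KS}, which lifts $L^1$-isolated local minimizers of a $\Gamma$-limit to $L^1$-local minimizers of the approximating sequence at a uniform scale. The required $\Gamma$-convergence $E_R(\cdot,\Omega)\to E_0(\cdot,\Omega)$ in $L^1(\Omega;\R^2)$, without any boundary constraint, is the multi-well Modica--Mortola result of Baldo \cite{baldo}. Accordingly, the task reduces to showing that $u^*$ is an $L^1$-isolated local minimizer of $E_0(\cdot,\Omega)$, i.e.\ that there exists $\delta_0>0$ such that $E_0(v,\Omega)>E_0(u^*,\Omega)$ for every $v\in BV(\Omega;P)\setminus\{u^*\}$ with $\norm{v-u^*}_{L^1(\Omega)}\leq \delta_0$.

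The isolation step carries the main geometric content. Any competitor $v\in BV(\Omega;P)$ close to $u^*$ in $L^1$ induces a partition $\{T_1,T_2,T_3\}$ of $\Omega$ whose pieces are $L^1$-close to the sectors $\{S_1,S_2,S_3\}$, with $E_0(v,\Omega)$ equal to the weighted length of its interfaces. Restricted to $\Omega$, $u^*$ consists of three straight segments from the origin to the boundary points $x_1,x_2,x_3$, meeting at the Young angles \eqref{tj} and thereby realizing a stationary weighted triple junction. A nearby perturbation either (a) displaces the interior junction point, which strictly raises the weighted length since three fixed-endpoint arcs connecting four prescribed points are uniquely length-minimized by straight segments meeting at the Fermat--Young point; or (b) shifts an interface endpoint along $\partial\Omega$ away from $x_i$, which is penalized by the strict concavity of $\partial\Omega$ at $x_i$: because $\Omega$ has a sharp concave feature there, any such displacement forces the boundary endpoint onto a portion of $\partial\Omega$ receding from the would-be straight segment, strictly lengthening the connecting interface. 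A compactness argument for partitions $L^1$-close to $\{S_1,S_2,S_3\}$ upgrades these strict inequalities to a uniform positive gap, yielding the isolation.

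With the isolation in hand, Kohn--Sternberg directly furnishes the $L^1$-local minimizers $u_R$ at the same scale $\delta_0$ and shows $u_R\to u^*$ in $L^1(\Omega)$, establishing \eqref{Rlocalmin} and \eqref{L1wp}. The energy convergence \eqref{zlimit} follows by bracketing: the $\Gamma$-liminf along $\{u_R\}$ gives $\liminf_R E_R(u_R,\Omega)\geq E_0(u^*,\Omega)$, while a recovery sequence for $u^*$, which lies inside the $L^1$-neighborhood of $u^*$ for $R$ large and is therefore admissible in \eqref{Rlocalmin}, combined with the local minimality of $u_R$, yields the matching upper bound. Finally, since no pointwise boundary constraint is imposed, smooth variations $u_R+\e\varphi$ with $\varphi\in C^\infty(\overline{\Omega};\R^2)$ are admissible for $\e$ small and preserve the $L^1$-local minimality; the standard first-variation computation then delivers the Euler--Lagrange equation \eqref{vRPDE} in $\Omega$ together with the homogeneous Neumann condition on $\partial\Omega$. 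The principal obstacle is the quantitative $L^1$-isolation of $u^*$: it demands simultaneously the interior stability of the Young triple junction and the boundary-pinning effect supplied by the concavity of $\partial\Omega$ at $x_1,x_2,x_3$, with either feature indispensable -- without concavity, length could be reduced by migrating boundary endpoints, and without the Young criticality the interior junction itself would drift.
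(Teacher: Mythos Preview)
Your proposal is correct and aligns with the approach the paper indicates: Theorem~\ref{Ziemer} is quoted from \cite{SZi} rather than proved in the paper itself, and the authors explicitly note that \cite{SZi} proceeds via the Kohn--Sternberg transfer principle \cite{KS} together with the $\Gamma$-convergence of \cite{baldo}. Your sketch of the isolation step---interior stability of the Young triple junction combined with the boundary-pinning effect of the strict concavity of $\partial\Omega$ at $x_1,x_2,x_3$---captures precisely the geometric mechanism underlying the result in \cite{SZi}.
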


\begin{figure}[H]
	\centering
	\includegraphics[width = 0.6\textwidth]{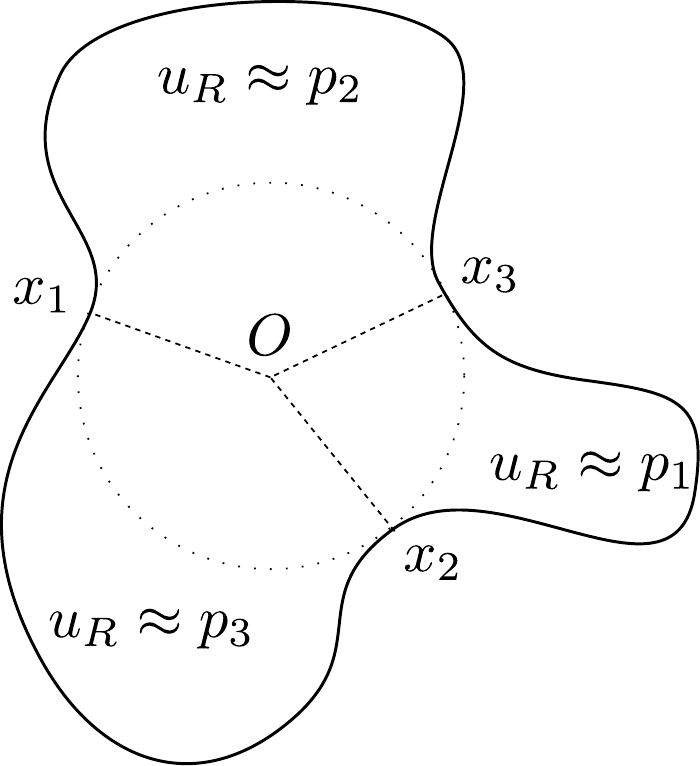}
	\caption{The $L^1$-local minimizer $u_R$.}
	\label{Ziemerlocalmin}
\end{figure}

Referring back to the three geodesics $\zeta_{ij}$ defined below \eqref{triangle}, we note that each is a simple curve (i.e. no self-intersections) and furthermore, any two of them, after including their endpoints, only intersect at one of their endpoints, e.g. $\zeta_{12}$ and $\zeta_{13}$ only intersect at $p_1$. This is because any transversal crossing would necessarily create a non-$C^1$ geodesic, violating regularity theory and any tangential intersection would violate the uniqueness of solutions to \eqref{hetero} subject to given initial conditions. As such, if we define $\Lambda$ as the union of the closure of the images of these three geodesics, that is, 
\beq
\Lambda:=P\cup \zeta_{12}(\R)\cup\zeta_{23}(\R)\cup\zeta_{13}(\R),\label{Lambda}
\eeq
then we can identify $\Lambda$ as a simple, closed curve in $\R^2$ passing through $p_1,p_2$ and $p_3$ which is smooth except at these 3 points. See Figure \ref{heteroclinics}.

\begin{figure}[H]
	\centering
	\includegraphics[width = 0.6\textwidth]{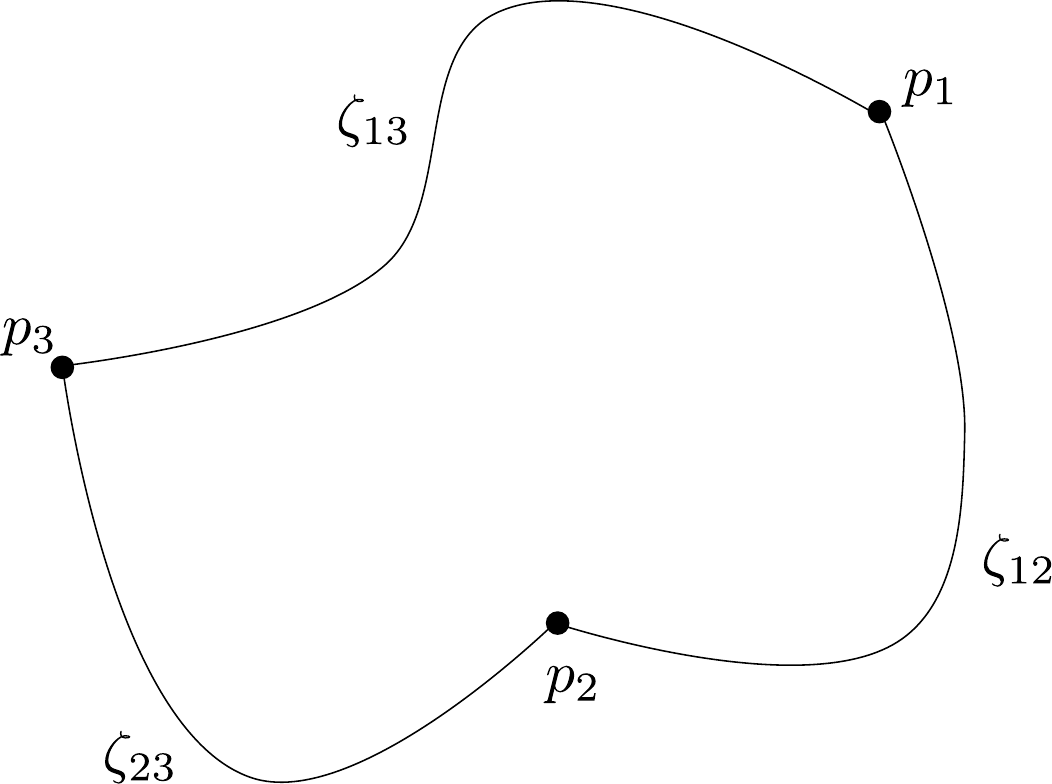}
	\caption{The closed curve $\Lambda$ consisting of the three heteroclinics.}
	\label{heteroclinics}
\end{figure}

An important property of the local minimizers constructed in Theorem \ref{Ziemer} is the following.

\begin{lemma}\label{centered} Let $\{u_R\}$ be the sequence of $L^1$ local minimizers established in Theorem \ref{Ziemer}. Then there exists a ball $B'$ compactly contained in $B_1$, a point $p$ inside $ \Lambda$ and a sequence of points $\{x_R\}\subset B'$ such that $u_R(x_R)=p$. In particular, there is a value $a_0>0$ such that
\[
{\rm {dist}}(u_R(x_R),\Lambda)>a_0.
\]
\end{lemma}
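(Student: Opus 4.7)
The plan is a degree-theoretic argument based on the observation that for large $R$, the image of a small circle around the origin (the triple-junction point of $u^*$) under $u_R$ traces a loop in $\R^2$ Hausdorff-close to the Jordan curve $\Lambda$, and therefore winds $\pm 1$ times around every interior point of $\Lambda$. Fix a ball $B'$ compactly contained in $B_1$ and centered at the origin, select any point $p$ in the bounded component of $\R^2\setminus\Lambda$, and set $a_0:=\dist(p,\Lambda)>0$. It suffices to show that for all $R$ large, $p\in u_R(B')$; any $x_R\in u_R^{-1}(p)\cap B'$ then satisfies $\dist(u_R(x_R),\Lambda)=a_0$.

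To produce a good circle, fix small $0<\rho_1<\rho_2$ with $\bar B_{\rho_2}\subset B'$ and let $A$ denote the annulus $B_{\rho_2}\setminus\bar B_{\rho_1}$. Theorem \ref{Ziemer} gives $E_R(u_R,\Omega)\to E_0(u^*,\Omega)$, which together with standard localization of $\Gamma$-limits forces $E_R(u_R,A)\to E_0(u^*,A)=(\rho_2-\rho_1)(c_{12}+c_{13}+c_{23})$, since the interface of $u^*$ inside $A$ is three radial segments of length $\rho_2-\rho_1$ with weights $c_{12},c_{13},c_{23}$. A Fubini/mean-value selection then yields a radius $r_R\in(\rho_1,\rho_2)$ for which both
\[
\int_{\partial B_{r_R}} R\,W(u_R)+\tfrac{1}{2R}|\partial_\tau u_R|^2\,\dH^1\ \leq\ c_{12}+c_{13}+c_{23}+o_R(1)
\]
and $\int_{\partial B_{r_R}}|u_R-u^*|\,\dH^1\to 0$. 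The elementary AM-GM inequality $\tfrac{1}{2R}|\partial_\tau u_R|^2+R\,W(u_R)\geq \sqrt{2W(u_R)}\,|\partial_\tau u_R|$ then bounds the $d$-length of the image loop $u_R(\partial B_{r_R})$ by the right-hand side, while the $L^1$-trace convergence forces this loop to pass near $p_1,p_2,p_3$ in the cyclic order dictated by $u^*$'s sectors.

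Because the three minimizing geodesics $\zeta_{ij}$ are assumed unique (see the paragraph following \eqref{triangle}), $\Lambda$ is the unique cyclic loop of minimal $d$-length through $p_1,p_2,p_3$. A compactness argument then shows $u_R(\partial B_{r_R})\to \Lambda$ in Hausdorff distance. For $R$ large the image loop is therefore at Euclidean distance at least $a_0/2$ from $p$ and has winding number $\pm 1$ about $p$; since $u_R$ is continuous on $\bar B_{r_R}$ by elliptic regularity for \eqref{vRPDE}, the Brouwer degree $\deg(u_R,B_{r_R},p)$ equals $\pm 1$, yielding $x_R\in B_{r_R}\subset B'$ with $u_R(x_R)=p$.

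The main technical obstacle is the Hausdorff convergence of $u_R(\partial B_{r_R})$ to $\Lambda$. Loops of bounded $d$-length in $\R^2$ are not Hausdorff-precompact a priori: near any well, $W$ vanishes quadratically, so a curve may make arbitrarily large Euclidean excursions while consuming negligible $d$-length. This must be handled by combining the forced cyclic visits to the wells (ensured by $L^1$ convergence) with the uniqueness of the $\zeta_{ij}$, which together rule out both long excursions away from the wells (where $W$ has a positive lower bound) and any multi-covering of the arcs of $\Lambda$.
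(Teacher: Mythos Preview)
Your approach is essentially the paper's: locate a good circle inside $B_1$ on which the one-dimensional energy is $c_{12}+c_{13}+c_{23}+o(1)$ and $u_R$ is $L^1$-close to $u^*$, argue that the image loop $u_R(\partial B_r)$ is Hausdorff-close to $\Lambda$, and finish with a winding-number/degree argument. The minor differences are that the paper fixes a single radius $r$ via Fatou and a subsequence (rather than an $R$-dependent $r_R$), and obtains the annular energy convergence through the Dirichlet $\Gamma$-limit Theorem~\ref{Gazoulis} together with local minimality of $u_R$, rather than by invoking a generic localization of $\Gamma$-limits; your route via global energy convergence \eqref{zlimit} and additivity also works.

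On the technical point you flag: the obstacle is slightly overstated. Near a well $p_\ell$ one has $d(p_\ell,q)\sim |q-p_\ell|^2$, so a $d$-length budget of size $\eta$ only permits Euclidean excursions of size $O(\sqrt{\eta})$ \emph{away from} $p_\ell$; what is uncontrolled is Euclidean \emph{arclength}, which is irrelevant for Hausdorff distance. The paper sidesteps this entirely by working arc-by-arc: after picking angles $\theta_i$ with $u_R(re^{i\theta_i})\to p_i$, the restriction of $u_R$ to each arc $A_{ij}$ is a minimizing sequence for the full one-dimensional functional \eqref{other} with endpoints approaching $p_i,p_j$, and uniqueness of $\zeta_{ij}$ then gives uniform closeness of $u_R(A_{ij})$ to $\zeta_{ij}(\R)$ directly. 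This is cleaner than a global $d$-length argument because it uses the full energy (potential plus kinetic) rather than only the cross term.
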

\begin{proof} We have that $\|u_R - u^*\|_{L^1(B_1)}$ tends to zero as $R\to +\infty$. Hence, 
by Fatou's Lemma,
\beq\label{l1fatou}0 \ge \int_0^1\liminf_{R\to +\infty}\|u_R - u^*\|_{L^1(\partial B_r)}\,dr\ge 0.\eeq
Similarly, for almost every $r\in(0,1)$, it holds that 
\beq\label{bdybound}\liminf_{R\to +\infty} E_R(u_R,\partial B_r) <+\infty.\eeq
It follows that  there exists $s\in (1/4,1/3)$ and $t\in(1/2, 2/3)$, and a subsequence still denoted $\{u_R\}$, such that $u_R\to u^*$ in $L^1(\partial \sA)$, where $\sA = B_t\setminus B_s$ and \eqref{bdybound} holds for $r = s,t$.


Since it also follows from standard elliptic estimates that $\abs{\nabla u_R}\leq CR$, we have all the hypotheses of Theorem \ref{Gazoulis} satisfied on $\Omega = \sA$ and so we can assert the existence of a recovery sequence, say $\{\tilde{u}_R\}$, associated with $u^*$ and the boundary values of $u_R$ on $\partial \sA$. It then follows from the $L^1$-local minimality of $u_R$ \eqref{Rlocalmin} that
\begin{eqnarray*}
    &&(t-s)(c_{12}+ c_{23}+ c_{13})  \ge \lim_{R\to +\infty}\int_{s}^{t} E_R(\tilde u_R,\partial B_r)\,dr\\&& \ge \lim_{R\to +\infty}\int_{s}^{t} E_R(u_R,\partial B_r)\,dr \ge (t-s)(c_{12}+ c_{23}+ c_{13}).
    \end{eqnarray*}
Using \eqref{l1fatou} again, we deduce the existence of some $r\in(1/4, 2/3)$ such that $u_R\to u^*$ in $L^1(\partial B_r)$ and 
\beq\label{bdyengy}E_R(u_R,\partial B_r) \to c_{12}+ c_{23}+ c_{13}.\eeq
From the convergence of $\{u_R\}$ in $L^1(\partial B_r)$, we deduce the existence of three angles $\theta_i$, $i=1,2,3$, such that going to a further subsequence $u_R(re^{i\theta_i})$ converges to the well $p_i$ for each $i$. It then follows from \eqref{bdyengy} that $u_R$ is a minimizing sequence for the one dimensional energy $E_R$ on each of the arcs $A_{ij}$, where $A_{ij}$ is the portion of $\partial B_r$ between the angles $\theta_i$ and $\theta_j$.

In light of our assumption of uniqueness for the three heteroclinic connections, we may 
assert that for $R$  sufficiently large one has
 \[
u_R(A_{ij})\;\mbox{is uniformly close to}\;\zeta_{ij}(\R)\;\mbox{for}\;1\leq i<j\leq 3.
\]
Consequently, the closed curve $u_R\left(\partial B_r\right)$ is uniformly close to the simple, closed curve $\Lambda$.

However, since $\Lambda$ is a Jordan curve, it partitions $\R^2$ into an inside, say $U$, and an outside unbounded set. For any $p\in U$, the index of $\Lambda$ with respect to $p$ is equal to $\pm 1$ and the same must be true for the curve $u_R\left(\partial B_r\right)$ since it is uniformly close to $\Lambda$ for $R$ large enough. Therefore, the latter curve cannot be homotopic to a constant in $\R^2\setminus \{p\}$, and so $p\in u_R\left(B(0,r)\right)$ for any $R$ large enough. Selecting  any $p\in U$ and any $x_R\in B(0,r)$ such that $u_R(x_R)=p$, the result follows.

\end{proof}

We now introduce our candidate for the entire solution of Theorem \ref{main} by taking a limit of blow-ups of $\{u_R\}$. 
\begin{prop}\label{bigU} Let $\{u_R\}$ be the sequence of local minimizers established in Theorem \ref{Ziemer}.
Let $\Omega^R:=\{x:\,\frac{x}{R}+x_R\in\Omega\}$ where $\{x_R\}$ is the sequence introduced in Lemma \ref{centered}. Also define $V^R(x):\Omega^R\to\R^2$  via $V^R(x):=u_R(\frac{x}{R}+x_R)$. Then there exists a subsequence $\{R_j\}\to \infty$ and a function $U:\R^2\to\R^2$ such that
\beq
V^{R_j}\to U \;\mbox{in}\;C^2\;\mbox{on compact subsets of}\;\R^2\label{blowuplimit}
\eeq
where $U$ solves \eqref{PDE}. Furthermore, $U$ is a local minimizer of $E$ in the sense of \eqref{italian}.
Finally, we have
\beq
{\rm {dist}}(U(0),\Lambda)>0.
\label{origin}
\eeq
\end{prop}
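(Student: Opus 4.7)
My plan is to prove the four assertions of Proposition \ref{bigU} in the order: compactness and convergence, the PDE, local minimality, and finally the distance-from-$\Lambda$ conclusion. Since $x_R \in B' \subset\subset B_1 \subset\subset \Omega$, we have $\dist(x_R, \partial\Omega) \geq d_0 > 0$ uniformly in $R$, so the rescaled domains $\Omega^R$ contain $B_{Rd_0}$ and hence exhaust $\R^2$. A direct change of variables in \eqref{vRPDE} shows that $V^R$ satisfies $\Delta V^R = \nabla_u W(V^R)$ on $\Omega^R$. By \eqref{Winfin} and the maximum principle, $\{u_R\}$ is uniformly bounded in $L^\infty$, hence so is $\{V^R\}$. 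Standard interior Schauder estimates applied to the PDE for $V^R$ then give uniform $C^{2,\alpha}$ bounds on every compact subset of $\R^2$ (once $R$ is large enough that the compact set lies inside $\Omega^R$). Arzel\`a-Ascoli and a diagonal extraction yield a subsequence $V^{R_j} \to U$ in $C^2_{\mathrm{loc}}(\R^2;\R^2)$, and passing to the limit in the equation gives \eqref{PDE} for $U$.

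The heart of the proof is the local minimality \eqref{italian}. Fix a compact set $K \subset \R^2$, pick a slightly larger open set $K'$ with $K \subset\subset K'$, and let $v \in H^1_{\mathrm{loc}}(\R^2;\R^2)$ agree with $U$ on $\R^2 \setminus K$. Choose a cutoff $\chi \in C_c^\infty(K')$ with $\chi \equiv 1$ on $K$, and define the interpolant
\[
w_R(x) := \chi(x)\,v(x) + (1-\chi(x))\,V^R(x),
\]
which equals $v$ on $K$ and equals $V^R$ outside $K'$. Pulling back to $\Omega$ by $\tilde w_R(y) := w_R(R(y-x_R))$, extended to equal $u_R$ on $\Omega \setminus (x_R + K'/R)$, produces a function that agrees with $u_R$ outside a set of measure $O(R^{-2})$, so $\|\tilde w_R - u_R\|_{L^1(\Omega)} \to 0$ and in particular is eventually $\leq \delta_0$. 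The local minimality \eqref{Rlocalmin} then gives $E_R(u_R, x_R + K'/R) \leq E_R(\tilde w_R, x_R + K'/R)$, and the change of variables yields
\[
E(V^R, K') \leq E(w_R, K').
\]
Since $V^R \to U$ in $C^2$ on $K'$, the left-hand side converges to $E(U, K')$. On $K' \setminus K$ the interpolant $w_R$ equals $\chi v + (1-\chi)V^R \to \chi U + (1-\chi)U = U$ in $C^2$ (using $v = U$ there), while on $K$ we have $w_R = v$. Passing to the limit gives $E(U, K') \leq E(v, K) + E(U, K' \setminus K)$, which upon canceling $E(U, K' \setminus K) = E(v, K' \setminus K)$ yields \eqref{italian}.

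For \eqref{origin}, observe that by construction $V^R(0) = u_R(x_R)$, and Lemma \ref{centered} provides $a_0 > 0$ with $\dist(u_R(x_R), \Lambda) > a_0$ for all large $R$; since $V^{R_j}(0) \to U(0)$, we obtain $\dist(U(0), \Lambda) \geq a_0 > 0$.

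The main obstacle is the local minimality step, specifically the construction of an admissible competitor for $u_R$ that both (i) matches $u_R$ on the boundary of its modified region, so that energies can be compared on a common set, and (ii) lies within the $L^1$ tolerance $\delta_0$. The cutoff-interpolation in the annulus $K' \setminus K$ resolves (i) at the cost of a small error, which is controlled because $V^R \to U = v$ in $C^2$ there, and the $R^{-2}$ scaling of the support handles (ii) automatically.
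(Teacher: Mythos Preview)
Your proof is correct and follows the same strategy as the paper: $L^\infty$ bound via \eqref{Winfin} and the maximum principle, Schauder estimates plus diagonal extraction for \eqref{blowuplimit}, rescaling the competitor and using the $O(R^{-2})$ support to land inside the $L^1$-tolerance $\delta_0$, and Lemma~\ref{centered} for \eqref{origin}.

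The only noteworthy difference is in the local-minimality step. The paper works with a \emph{smooth} compactly supported perturbation $\tilde v$, sets $\tilde v_R(x)=\tilde v(Rx)$ (suitably translated), and compares $E(V^R,K)$ with $E(V^R+\tilde v,K)$ directly: no cutoff is needed because $V^R+\tilde v$ already agrees with $V^R$ outside $K$, and passing to the limit is immediate since everything is $C^2$. This yields \eqref{italian} for smooth perturbations, with the extension to general $v\in H^1_{\mathrm{loc}}$ left implicit via density. Your cutoff-interpolation on the annulus $K'\setminus K$ instead handles an arbitrary $H^1_{\mathrm{loc}}$ competitor $v$ in one pass: since $v\ne V^R$ outside $K$ (only $v=U$ there), you need the blending to make the pulled-back competitor match $u_R$ exactly on $\partial(x_R+K'/R)$, and the $C^2$ convergence $V^R\to U=v$ on $K'\setminus K$ kills the interpolation error in the limit. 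Your route is slightly more technical but is self-contained and avoids the density argument.
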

\begin{rmrk}
For the remainder of the paper, when we say that a function $U$ is a local minimizer of $E$, we will always mean in the sense of \eqref{italian}.     
\end{rmrk}
\begin{proof}
Assumption \eqref{Winfin} implies through the maximum principle applied to $\abs{u_R}^2$ that $\norm{u_R}_{L^\infty(\Omega)}\leq M$ and so the same is true of $\{V_R\}$.  In light of \eqref{vRPDE} we observe that $V^R$ satisfies \eqref{PDE} on $\Omega^R$. Then standard elliptic estimates and bootstrapping leads, in particular, to uniform $C^{2,\alpha}$ bounds on compact sets for $\{V^R\}$. The conclusion \eqref{blowuplimit} follows as does the assertion that $U$ solves \eqref{PDE}.
 
 To establish the local minimality of $U$, fix any compact set $K$ and let $\tilde{v}:\R^2\to\R^2$ be any smooth function supported in $K$. Let $\tilde{v}_R(x):=\tilde{v}\big(R(x-x_R)\big)$ so that $\tilde{v}_R$ is supported in $x_R+\frac{1}{R}K$.  Then we have
 \[
 \int_K\abs{\tilde{v}_R}\,dx\leq \left(\max_K\abs{\tilde{v}}\right)\frac{1}{R^2}\abs{K}\quad\mbox{where}\;\abs{K}=\,\mbox{Lebesgue measure of}\;K.
 \]
 Now taking $R$ large enough so that $\left(\max_K\abs{\tilde{v}}\right)\frac{1}{R^2}\abs{K}<\frac{\delta_0}{2}$ we can invoke \eqref{Rlocalmin} to 
 conclude that
 \begin{align*}
 0&\leq  E_R(u_R+\tilde{v}_R,\Omega)-E_R(u_R,\Omega)\\ 
 &=E_R\big(u_R+\tilde{v}_R,x_R+\frac{1}{R}K\big)-E_R\big(u_R,x_R+\frac{1}{R}K\big) \\
 &=\frac{1}{R}\left(E(V^R+\tilde{v},K)-E(V^R,K)\right).
 \end{align*}
 Passing to the limit $R_j\to\infty$ in the inequality $ E(V^{R_j},K)\leq E(V^{R_j}+\tilde{v},K)$ we obtain \eqref{italian}.
 
 Property \eqref{origin} follows from Lemma \ref{centered} in light of the uniform convergence of $V^{R_j}\to U$.
\end{proof}

We conclude this section with a simple but crucial estimate on $U$ given in the following:
\blemma\label{ub} There exists a constant $C_1=C_1(W)$ such that for every $R>0$ one has
\beq
E(U,B_R)\leq C_1\,R.
\label{eub}
\eeq
\elemma

\begin{proof}
We may as well assume $R>1.$ Then we appeal to the local minimality of $U$, namely \eqref{italian}, with $v$ chosen to equal, say, $p_1$ on $B_{R-1}$ and then $v$ smoothly interpolating between $p_1$ and $U$ on the annulus $B_R\setminus B_{R-1}$. Since $E(v,B_{R-1})=0$ and $U$ and $\nabla U$ are uniformly bounded in terms of $W$ on the annulus, the result follows.
\end{proof}

\section{Blowdown Analysis}

In this section, we will characterize the limits of the blowdowns of any local minimizer of $E$. For this purpose we will need the following  compactness result associated with local minimizers of $E$. 
\bprop\label{compactness} Let $U:\R^2\to\R^2$ be a local minimizer of $E$. We have:\\
\noindent
(i) Let $\{R_j\}\to\infty$ be any sequence. Then there exists a subsequence $\{R_{j_k}\}$ and a function $u_0\in BV_{{\rm loc}}\left(\R^2;P\right)$ such that the blowdowns $\{U_{R_{j_k}} \}$ of $U$ satisfy
\beq
U_{R_{j_k}}\to u_0\quad\mbox{in}\; L^1_{{\rm loc}}(\R^2;\R^2).\label{Lone}
\eeq
(ii) After perhaps passing to a further subsequence (still denoted by $\{R_{j_k}\}$), one has for every $\ell\in\mathbb{Z}^+$ there exists a radius $\lm_\ell\in [\ell,\ell+1]$ such that 
\beq
\sup_k \left(\int_{\partial B_{\lm_\ell}} R_{j_k}W(U_{R_{j_k}})
+\frac{1}{2R_{j_k}}\abs{\nabla U_{R_{j_k}}}^{2}\;\dH^1\right)\leq 3C_1,\label{DirBd}
\eeq
where $C_1$ is the constant appearing in Lemma \ref{ub}.

Furthermore, 
\beq
U_{R_{j_k}}\to {\it trace\; of}\;u_0 \quad\mbox{in}\; L^1(\partial B_{\lm_\ell};\R^2).\label{limdir}
\eeq 
Lastly, $u_0$ is a local minimizer of $E_0(\cdot, \R^2)$ given in \eqref{ez} in the sense that
\beq
E_0(u_0,K)\leq E_0(v,K)\label{lmezero}
\eeq
for every compact $K\subset\R^2$ and every $v\in BV_{{\rm loc}}(\R^2;P)$ such that $v=u_0$ on $\R^2\setminus K.$
\eprop
\begin{proof}
Since $E(U,B_{\lm R})=R\,E_R(U_R,B_\lm)$, it then follows from Lemma \ref{ub} that
\beq
E_R(U_R,B_\lm)\leq \lm C_1\quad\mbox{for any}\;\lm>0.\label{uniebd}
\eeq 
Hence, the sequence $\{U_R\}$ has uniformly bounded energy on any ball $B_\lm$ and so the proof of (i) follows from \cite{baldo}, Prop. 4.1 using a diagonalization procedure.   To prove (ii), we note that from \eqref{uniebd}, in particular, it follows that
\beq
\int_1^2 \int_{\partial B_\lm} R_{j_k}W(U_{R_{j_k}})
+\frac{1}{2R_{j_k}}\abs{\nabla U_{R_{j_k}}}^{2}\;\dH^1\,d\lm<2C_1\quad\mbox{for all}\;k.\label{evid}
\eeq
Letting 
\[
f_k(\lm):=\int_{\partial B_\lm} R_{j_k}W(U_{R_{j_k}})
+\frac{1}{2R_{j_k}}\abs{\nabla U_{R_{j_k}}}^{2}\;\dH^1,
\]
let us suppose that \eqref{DirBd} is false for $\ell=1$. Then necessarily, for every $\lm \in [1,2]$ it would hold that
\beq\liminf_{k\to\infty} f_k(\lm)\geq 3C_1.\label{contdir}
\eeq
 Then,
by  Fatou's Lemma, 
\[ 2C_1\geq \liminf_{k\to\infty}\int_1^2f_k(\lm)\,d\lm\geq\int_1^2\liminf_{k\to\infty}f_k(\lm)\,d\lm\geq 3C_1,
\]
and a contradiction is reached. Passing to a further subsequence, the existence of a function $h\in BV(\partial B_{\lm_1};P)$ such that
$U_{R_{j_k}}\to h \quad\mbox{in}\; L^1(\partial B_{\lm_1};\R^2)$ follows from \eqref{DirBd} using the same compactness argument from \cite{baldo}, applied now to the energy restricted to the circle $\partial B_{\lm_1}$ with the full gradient replaced by the tangential gradient. 

 To establish \eqref{limdir} and the local minimality of $u_0$ we observe from the $\Gamma$-convergence result Theorem \ref{Gazoulis} that $u_0$ is the limit of minimizers of $\tilde{E}_{R_{j_k}}(\cdot,B_{\lm_1})$. Hence, $u_0$ must necessarily minimize $E_0^h(\cdot,B_{\lm_1})$. Indeed, for any $v\in BV(B_{\lm_1};P)$ one has
\[
E_0^h(v,B_\lm)=\lim_{k\to\infty} \tilde{E}_{R_{j_k}}(V_k,B_{\lm_1})\geq\liminf_{k\to\infty}\tilde{E}_{R_{j_k}}(U_{R_{j_k}},B_{\lm_1})\geq E_0^h(u_0,B_{\lm_1}),
\]
where $\{V_k\}$ is the recovery sequence associated with $v$ guaranteed to exist by Theorem \ref{Gazoulis}. We note that the first inequality above follows from the local minimality of $U_{R_{j_k}}$ in the sense of \eqref{italian}, since by construction, $V_k=U_{R_{j_k}}$ on $\partial B_{\lm_1}.$

 It follows that in fact, $u_0=h$ on $\partial B_{\lm_1}$, since, for example, if $h=p_1$ along some arc $\gamma\subset\partial B_{\lm_1}$, while the trace of $u_0=p_2$ on $\gamma$, then one could produce a lower energy competitor $v$ for the energy $E_0^h(\cdot, B_{\lm_1})$ by setting $v=p_1$ inside the slice of $B_{\lm_1}$ bounded by $\gamma$ and the secant line $L$ connecting the endpoints of $\gamma$. Then
 \[
E_0^h(u_0,B_{\lm_1})-E_0^h(v,B_{\lm_1})=d(p_1,p_2)\left(\mathcal{H}^1(\gamma)-\mathcal{H}^1(L)\right)>0,
 \]
 in light of the strict convexity of $B_{\lm_1}$, thus contradicting the minimality of $u_0$. Necessarily then, $u_0$ is also a minimizer of $E_0$ among competitors agreeing with $u_0$ on $\partial B_{\lm_1}$.
 
 We conclude the proof by noting that the same logic allows us to select a value $\lm_\ell\in [\ell,\ell+1]$ for every $\ell\in\mathbb{Z}^+$ and thus to conclude that $u_0$ minimizes $E_0$ in every ball $B_{\lm_{\ell}}$ among all competitors that agree with $u_0$ on $\partial B_{\lm_{\ell}}.$ Hence, \eqref{lmezero} holds. 
 \end{proof}
\subsection{Pohozaev and asymptotic equipartition of energy}

With an eye towards utilizing a Pohozaev identity, we next introduce the stress-energy tensor associated with a solution $U:\R^2\to\R^2$ to \eqref{PDE}:
\[
T_{ij}=U_{x_i}U_{x_j}-\delta_{ij}\left(\frac{1}{2}\abs{\nabla U}^2+W(U)\right).
\]
A standard calculation yields that $T$ is divergence-free. From this fact we get the Pohozaev identity on the ball $B_R$:
\[
\int_{B_R}\left(x_iT_{ij}\right)_{x_j}\,dx=\int_{B_R}\delta_{ij}T_{ij}+x_i(T_{ij})_{x_j}\,dx=\int_{B_R}{\rm tr}\,T\,dx.
\]
Applying the divergence theorem leads to
\[
R\int_{\partial B_R}\nu_iT_{ij}\nu_j\,\dH^1=-2\int_{B_R}W(U)\,dx,
\]
where $\nu=x/R$ is the outer unit normal to $B_R$. Using the definition of $T$ this can be written as
\beq
\frac{1}{2}\int_{\partial B_R} \frac{1}{2}\abs{U_\nu}^2-\frac{1}{2}\abs{U_s}^2-W(U)\,\dH^1=-\frac{1}{R}\int_{B_R}W(U)\,dx,\label{poho}
\eeq
where $U_s$ denotes the tangential derivative of $U$ along $\partial B_R$. Through \eqref{poho} we immediately obtain the following identity.
\bprop \label{Pohozaev}
Any entire solution $U$ to \eqref{PDE} satisfies
\beq\label{Poho1}
\frac{d}{dR}\left(\frac{1}{R}\int_{B_R}W(U)\,dx\right)=\frac{1}{2R}\int_{\partial B_R} \frac{1}{2}\abs{U_\nu}^2-\frac{1}{2}\abs{U_s}^2+W(U)\,\dH^1
\eeq
for all $R>0$.
\eprop

Our aim is to obtain a kind of asymptotic monotonicity result. To this end,
we define 
\beq\label{renorm}
\tilde{W}(R):=\frac{1}{R}\int_{B_R} W(U)\,dx.
\eeq
Then for any two values $0<R_1<R_2$ we integrate \eqref{Poho1} to find that
\begin{eqnarray}
&&\tilde{W}(R_2)-\tilde{W}(R_1)
=\int_{R_1}^{R_2}\frac{1}{2r}\int_{\partial B_r} \frac{1}{2}\abs{U_\nu}^2-\frac{1}{2}\abs{U_s}^2+W(U)\,\dH^1\,dr\nonumber\\
&&=\int_{B_{R_2}\setminus B_{R_1}} \frac{1}{2\abs{x}}\left( \frac{1}{2}\abs{U_\nu}^2-\frac{1}{2}\abs{U_s}^2+W(U)\right)\,dx\nonumber\\
&&=\int_{B_{R_2}\setminus B_{R_1}} \frac{1}{2\abs{x}}\left( W(U)-\frac{1}{2}\abs{\nabla U}^2+\abs{U_\nu}^2\right)\,dx.
\label{a}
\end{eqnarray}
Hence,
\begin{eqnarray}
&&     \tilde{W}(R_2)-\tilde{W}(R_1)
\geq -\int_{B_{R_2}\setminus B_{R_1}} \frac{1}{2\abs{x}}\abs{W(U)-\frac{1}{2}\abs{\nabla U}^2}\,dx\nonumber\\
&&\geq
-\frac{1}{2R_1}\int_{B_{R_2}\setminus B_{R_1}}\left(\sqrt{W(U)}-\frac{1}{\sqrt{2}}\abs{\nabla U}\right)\left(\sqrt{W(U)}+\frac{1}{\sqrt{2}}\abs{\nabla U}\right)\,dx\nonumber\\
&&\geq -\frac{1}{2R_1}\left\{\int_{B_{R_2}\setminus B_{R_1}}\left(\sqrt{W(U)}-\frac{1}{\sqrt{2}}\abs{\nabla U}\right)^2\,dx\right\}^{1/2}
\left\{\int_{B_{R_2}\setminus B_{R_1}}\left(\sqrt{W(U)}+\frac{1}{\sqrt{2}}\abs{\nabla U}\right)^2\,dx\right\}^{1/2}.\nonumber\\
&&\label{b}
\end{eqnarray}
Now in light of \eqref{eub}, we have that
\begin{eqnarray}
\left\{\int_{B_{R_2}\setminus B_{R_1}}\left(\sqrt{W(U)}+\frac{1}{\sqrt{2}}\abs{\nabla U}\right)^2\,dx\right\}^{1/2}&&\leq
\sqrt{2}\left\{\int_{B_{R_2}\setminus B_{R_1}}\left(W(U)+\frac{1}{2}\abs{\nabla U}^2\right)\,dx\right\}^{1/2}\nonumber\\
&&\leq \sqrt{2C_1}R_2^{1/2},\label{c}
\end{eqnarray}
so that \eqref{b} implies
\beq
  \tilde{W}(R_2)-\tilde{W}(R_1)
\geq-\sqrt{\frac{C_1}{2}}\frac{R_2^{1/2}}{R_1}\left\{\int_{B_{R_2}\setminus B_{R_1}}\left(\sqrt{W(U)}-\frac{1}{\sqrt{2}}\abs{\nabla U}\right)^2\,dx\right\}^{1/2}.\label{d}
\eeq
Inequality \eqref{d} shows that we can achieve an asymptotic monotonicity-type formula provided we can establish a sufficiently sharp measure of equipartition of energy.

The key estimate we will show is:
\bprop\label{equipartition} 
There exist constants $C_2>0$ and $\alpha\in (0,1)$,  such that for any local minimizer $U$ of $E$ and any $R$ sufficiently large one has the estimate
\beq
\int_{B_{R}}\left(\sqrt{W(U)}-\frac{1}{\sqrt{2}}\abs{\nabla U}\right)^2\,dx<C_2R^{1-\alpha}.
\label{keyest}
\eeq
\eprop

Our proof of Proposition \ref{equipartition} will involve the construction of a recovery sequence with a quantitative error bound, corresponding to a minimizer of $E_0(\cdot,B)$ in a ball $B$ subject to a general Dirichlet condition $h\in BV(\partial B;P)$, that is, among partitions $\{S_1,S_2,S_3\}$ of $B$ satisfying
\beq
\partial S_\ell\cap\partial B=h^{-1}(p_\ell)\quad\mbox{for}\;\ell=1,2,3.
\label{dircond}
\eeq
For this upper bound construction we will require a rather complete characterization of minimizers of this partitioning problem, to be found in Theorem \ref{Novack} and Corollary \ref{tripbound}. Our proof of Proposition \ref{equipartition} will also require a sharp lower bound for the energy of a related but somewhat non-standard partitioning problem. To state it, we first observe that given a partition, say $\{S_1,S_2,S_3\}$ of a ball $B$, its cost as given by $E_0$
can be equivalently expressed as
\beq
E_0(S_1,S_2,S_3)=t_1\mH^1(\partial S_1\cap B)+t_2\mH^1(\partial S_2\cap B)+t_3\mH^1(\partial S_3\cap B),\label{tform}
\eeq
where the numbers $t_1,\,t_2$ and $t_3$ are the solution to the system
\[
t_1+t_2=c_{12},\;t_1+t_3=c_{13},\;t_2+t_3=c_{23}.
\]
Solving, we find 
\beq
t_1=\frac{1}{2}\left(c_{12}+c_{13}-c_{23}\right),\;t_2=\frac{1}{2}\left(c_{12}+c_{23}-c_{13}\right),\;t_3=\frac{1}{2}\left(c_{13}+c_{23}-c_{12}\right),\label{t123}
\eeq
and so we note that each $t_j$ is positive in light of the assumption \eqref{triangle}. 

Then for any $\delta>0$ and any $h\in BV(\partial B;P) $ we consider the minimization of $E_0$ as given by \eqref{tform} among all disjoint subsets $\{S_1,S_2,S_3\}$ of $B$ satisfying the Dirichlet condition \eqref{dircond}, along with the constraint 
\beq
    \abs{B\setminus \left(\cup_{\ell=1}^3 S_\ell\right)}\leq \delta.\label{deltapart}
\eeq
We will require a good bound from below for the infimum of $E_0$ subject to \eqref{deltapart} and Dirichlet data $h\in BV(\partial B;P)$ in terms of the infimum of $E_0$ subject to the same Dirichlet condition but for  actual partitions of $B$, that is, with $\delta=0$ in \eqref{deltapart}. This is presented in Theorem \ref{almostpartitions}.
\vskip.1in
\noindent
{\it Proof of Prop. \ref{equipartition}}.\\
The proof of \eqref{keyest} will follow by first establishing an upper bound of the form
\beq
\int_{B_1}RW(U_R)+\frac{1}{2R}\abs{\nabla U_R}^2\,dx\leq m_R+\frac{C}{R^{\alpha}}\quad\mbox{for some}\;C>0\;\mbox{and}\;\alpha\in (0,1),
\label{aup1}\eeq
where $m_R$, defined below in \eqref{minEzero}, represents the minimal value of the partitioning problem $E_0$ subject to a certain Dirichlet condition related to $U_R$.
Then we will utilize Theorem \ref{almostpartitions} to establish a matching lower bound of the form 
\beq
\sqrt{2}\int_{B_1}\sqrt{W(U_R)}\abs{\nabla U_R}\,dx\geq m_R-\frac{C'}{R^{\alpha}}\quad\mbox{for some}\;C'>0.\label{biglb1}
\eeq
If we rephrase the desired upper and lower bounds \eqref{aup1} and \eqref{biglb1} in terms of $U$ instead of its blowdowns,
then the upper bound we seek takes the form
\beq
\int_{B_R}W(U)+\frac{1}{2}\abs{\nabla U}^2\,dx\leq Rm_R+CR^{1-\alpha},\label{aup}
\eeq
and the lower bound we want takes the form
\beq
\sqrt{2}\int_{B_R}\sqrt{W(U)}\abs{\nabla U}\,dx\geq Rm_R-C'R^{1-\alpha}.\label{biglb}
\eeq
The desired inequality \eqref{keyest}  then follows by combining \eqref{aup} and \eqref{biglb}.
\vskip.1in
\noindent{\bf Upper bound construction.}
\vskip.1in
Since by Proposition \ref{bigU}, $U_R$ minimizes $E_R(\cdot,B_1)$ among competitors sharing its boundary values on $\partial B_1$, we can obtain the upper bound through a construction of a low-energy competitor. In essence, this is akin to the recovery sequence construction for vector Allen-Cahn with a multi-well potential, adapted to handle a Dirichlet condition, as in the recent work \cite{Gaz}. The difference is that here this must be made {\it quantitative} with an error that is $O(R^{-\alpha})$. However, unlike the general recovery sequence construction, here we only need to build it for an $E_0$-{\it minimizing} partition that yields the value $m_R$ in the problem \eqref{minEzero} defined below.
 
To begin the pursuit of an upper bound, we first note that by \eqref{eub} we have for any $R>0$:
 \[
\int_{R}^{2R}E\left(U,\partial B_r\right)\,dr = E\left(U,B_{2R}\setminus B_R\right)\leq 2C_1R.
\]
Hence, by the Mean Value Theorem, there exists a value $R'\in (R,2R)$ such that
\beq
E\left(U,\partial B_{R'}\right)\leq 2C_1.\label{2c}
\eeq
If we can establish \eqref{keyest} for $R'$, then replacing $C_2$ by $2^{1-\alpha}C_2$, we will have established \eqref{keyest} for $R$ as well.
Thus, with no loss of generality, we may assume that $R$ satisfies \eqref{2c} as well. Phrasing this condition in terms of the blowdowns $\{U_{R}\}$, the assumed bound takes the form
\beq
E_R\left(U_R,\partial B_1\right)\leq 2C_1.\label{wlog}
\eeq
The upper bound estimate \eqref{aup1} will result from the construction of a low-energy competitor for the minimization of $E_R(\cdot,B_1)$ that agrees with the blown down minimizer $U_R$ on $\partial B_1$.

 From \eqref{wlog}, it follows that off of a small set on $\partial B_1$, the function $U_{R}$ must stay near one of the three wells $p_1,p_2$ or $ p_3.$ 
 We now use this fact to identify a partition of $\partial B_1$  into three sets.

We note that in light of the non-degeneracy assumption \eqref{posdef}, there exists a positive number $\beta$, depending only on $W$, such that
\begin{equation}
W\;\mbox{is strictly convex for}\;\abs{p-p_j}<\beta,\;j=1,2,3.\label{convexngd}
\end{equation}
and furthermore,
\begin{equation}
\frac{b}{2}\abs{p_\ell-q}^2\leq W(q)\leq 2b\abs{p_\ell-q}^2.\label{cbds}   
\end{equation}

 Then let us define the set 
 \beq
 A_R:=\left\{x\in \partial B_1:\,d(U_R(x),P)>\frac{\beta}{2}\right\}.\label{TR}
 \eeq
This set is necessarily a union of open arcs. If such an arc $I$ possesses a point $x$ such that $d(U_R(x),P)\geq \beta$, then since at the endpoints of $I$, necessarily $U_R$ is at metric distance $\frac{\beta}{2}$ from $P$, it must be the case that $E_R\left(U_R,I\right)\geq C,$
 for some positive constant $C$ depending only on $W$. We define $T_R$ to be the union of all such arcs, and so in light of \eqref{wlog}, we can assert that $T_R$ consists of a {\it finite} union of arcs whose total number is bounded by a constant depending only on $W$. It then follows from $\int_{T_R}RW(U_R)\,d\mH^1\leq 2C_1$ that 
 \beq
\mH^1\left(T_R\right)\leq CR^{-1}.\label{smalleye}
 \eeq
 On $\partial B_1\setminus T_R$ we note that the metric distance from $U_R$ to $P$ is less than $\beta$. 

 \begin{center}
     \underline{Boundary layer construction on the annulus $B_1\setminus B_{1-\rho}$}
 \end{center}

 We begin with the construction of a boundary layer on $B_1\setminus B_{1-\rho}$, where $\rho$ will be determined later. However, we will insist that
 \begin{equation}
  \rho\geq \frac{1}{R}.\label{bigrho}   
 \end{equation}
The number of disjoint arcs in $T_R$ is bounded by a constant depending on $W$ only, hence the same is true for the complement of $T_R$. We split this complement into two sets; $S_R$ and the remainder, the set $S_R$ being the union of arcs having length less than $\lm$, where $\lm\le 1$ is another parameter to be determined later. Let us denote the arcs in the remainder by say $\{I_k\}$ for $k=1,2,\ldots, N_R$, where $N_R$ is bounded by a constant $N_0=N_0(W)$. Then each $I_k$ will be of length at least $\lm$ and can be naturally associated with one of the wells in the sense that $U_R$ remains within a metric distance of $\beta$ from that well throughout $I_k$. 
 We now can expand each $I_k$ to a slightly larger arc $\tilde{I}_k$,  absorbing arcs of $T_R$ and of $S_R$ in the process, so as to form a partition of $\partial B_{1}$, where in light of \eqref{smalleye}, we know that
 \beq
 \mH^1\left(\partial B_1\setminus \cup I_k\right)=\mH^1\left(\cup \tilde{I}_k\setminus \cup I_k\right)\leq C(R^{-1}+\lm). \label{smalltil}
 \eeq
 We note that there is some ambiguity in terms of the assignment of an element of $P$ to arcs comprising  
$\cup\tilde{I}_k\setminus \cup I_k$. That is, if say $I_k$ is associated with $p_1$ and an adjacent arc $I_{k+1}$ is associated with $p_2$, then one can either expand $I_k$ into the gap between them and assign the value $p_1$ to the resulting $\tilde{I}_k$ or expand $I_{k+1}$ into the gap and assign the value $p_2$ to the resulting $\tilde{I}_{k+1}$. As well shall see, due to the smallness of these gaps guaranteed by \eqref{smalltil}, it will not matter which choice we make here.

On $\partial B_{1-\rho}$ we define a function $V_R(1-\rho,\theta)$ as follows.
 If $e^{i\theta}\in \tilde{I}_k$ and $e^{i\theta}$ is at least a distance $\frac{1}{R}$ from the endpoints of $\tilde{I}_k$, we take $V_R(1-\rho,\theta)$ to be equal to whichever well is associated with $I_k$. On the rest of $\partial B_{1-\rho}$, we define $V_R(1-\rho,\theta)$ through linear interpolation in $\theta$, so that
 \beq
\abs{\frac{\partial V}{\partial \theta}(1-\rho,\theta)}\leq CR.\label{joshua}
 \eeq

Now we define $V_R$ in the annulus $\mathcal{A}_{1,1-\rho}:=B_1\setminus B_{1-\rho}$ taking $V_R(r,\theta)$ to linearly interpolate in $r$ between $U_R(1,\theta)$ and $V_R(1-\rho,\theta)$ for each $\theta$. We estimate the energy in this annulus as follows: 

We begin with the cost of interpolation from $U_R(1,\theta)$ to $V_R(1-\rho,\theta)$ for $e^{i\theta}\in \cup_{k=1}^{N_R} I_k$. In view of \eqref{convexngd} and the fact that $V_R(1-\rho,\theta)$ is a constant equal to one of the wells on each $I_k$, we can invoke the convexity of all terms in the energy to assert that for any $t\in (0,1)$ one has
\beq
W(V_R(1-t\rho,\theta))\leq (1-t)W(U_R(1,\theta))\;\mbox{and}\; \abs{\frac{\partial V_R}{\partial\theta}(1-t\rho,\theta)}^2\leq (1-t) \abs{\frac{\partial 
 U_R}{\partial\theta}(1,\theta)}^2.\label{etienne}
\eeq

Estimating the radial derivative, we find
\beq
\abs{\frac{\partial V_R}{\partial r}(r,\theta)}^2\leq \frac{\abs{U_R(1,\theta)-p_\ell}^2}{\rho^2}\leq C\frac{W(U_R(1,\theta))}{\rho^2}\quad\mbox{for some}\;\ell\in\{1,2,3\},\label{Peter}
\eeq
for any $r\in (1-\rho,1).$
Combining \eqref{wlog}, \eqref{etienne} and \eqref{Peter}, we integrate over that part of the annulus $\mathcal{A}_{1,1-\rho}$ corresponding to the set of arcs $\cup_{k=1}^{N_R}I_k$, say $A'_{1,1-\rho}$, to obtain
\beq
E_R\left(V_R,A'_{1,1-\rho}\right)\leq C\left(\rho
+\frac{1}{R^2\rho}\right).\label{sandier}\eeq

Now  we turn to an estimate of the energetic cost in that portion of the annulus, say $A''_{1,1-\rho}$, corresponding to arcs in the complement of $\cup_{k=1}^{N_R}I_k$. Estimating the tangential derivative in $A''_{1,1-\rho}$, we find in view of \eqref{joshua} that
\[
\abs{\frac{\partial V_R}{\partial \theta}(r,\theta)}^2\leq CR^2,
\]
and for the normal derivative we have
\[
\abs{\frac{\partial V_R}{\partial r}(r,\theta)}^2\leq C\frac{1}{\rho^2}.
\]
Invoking \eqref{smalltil},  for the potential term we can then estimate that
\[
\int_{A''_{1,1-\rho}}RW(V_R)\,dx\leq CR\abs{A''_{1,1-\rho}}\leq CR\rho\left(R^{-1}+\lm\right),
\] 
and so
\begin{eqnarray}
E_R\left(V_R,A''_{1,1-\rho}\right)&\leq&
C\rho\left(\frac1R+\lm\right)\left(R+\frac{1}{R\rho^2}\right)\nonumber\\&=&C\left(\rho+\frac{1}{R^2\rho}+\rho\lm R+\frac{\lm}{R\rho}\right).\label{dmitry}
\end{eqnarray}
Therefore, summing \eqref{sandier} and \eqref{dmitry}, we see that, since $\lambda \le 1$ and $R\ge 1$,  
\beq
E_R\left(V_R,\mathcal{A}_{1,1-\rho}\right)\leq 
C\left(\rho+\frac{1}{R\rho}+\rho\lm R\right).\label{golovaty}
\eeq
\vskip.1in
\begin{center}
    \underline{Construction of the competitor in $B_{1-\rho}$}
\end{center}
\vskip.1in
Let us now define $V_R$ in the ball $B_{1-\rho}.$ 
For this, we will introduce the minimization of $E_0$ subject to the Dirichlet condition $h_R:\partial B_{1-\rho}\to P$ satisfying $h_R(x)=p_\ell$ if $x/\abs{x}\in \tilde{I}_k$, where $I_k$ is the arc associated with $p_\ell$ and $\tilde{I}_k$ is its expansion, as described above \eqref{smalltil}:
\beq
m_R:=\inf \left\{E_0(u,B_{1-\rho}):\;u\in BV(B_{1-\rho};P), \;u=h_R\;\mbox{on}\;\partial B_{1-\rho}\right\}.\label{minEzero}
\eeq
Let $u_0$ denote a minimizer for \eqref{minEzero}. By Theorem \ref{Novack} we know that $\{u_0 = p_\ell\}$ is a union of no more than $N_0$ convex open sets, which we refer to as chambers. We emphasize that the constant $N_0=N_0(W)$ is independent of $R$. Furthermore, each chamber is bounded by a finite number of line segments and at least one boundary arc from the collection $\{\tilde{I}_k\}$. Lastly, from Corollary \ref{tripbound}, the number of triple junctions in the configuration $u_0$ is bounded by a constant depending only on $N_0$, thus again a number independent of $R$. 

Because each chamber contains the convex hull of $(1-\rho)\tilde{I}_k$ for some $k$
and each $\tilde{I}_k$ has arclength at least $\lm$, the thickness of each chamber is bounded below by $C\lm^2$, where the thickness is defined as the minimal distance between any pair of parallel supporting planes for the chamber.  

The map $V_R$ is defined in each chamber, say $\Omega$, as follows: Consider any segment $I$ of $\partial\Omega\cap B_{1-\rho}$ having length at least $2\eta$, where $\eta$ is to be determined later, but where we require that
\begin{equation}
    \eta\geq\frac{1}{R}.\label{biggeta}
\end{equation}
We then consider a sub-segment $J\subset I$ of length smaller by $\eta>0$ on each side of $I$ and consider a rectangle in $\Omega$ with base $J$ and height $h>0$. It is clear that if $h>0$ is small enough, then these rectangles are disjoint and included in $\Omega$. We now quantify how large $h$ is allowed to be for this property to still hold. We will always assume the bound
  \beq\label{deltaeta} 2h \le \eta.\eeq
  
To this aim, assume $h\in(0,\eta/2]$ is the largest height for which it holds that the rectangles are mutually disjoint and included in $\Omega$. For this value of $h$, if it is different from $\eta/2$, either two rectangles make contact with each other, or one rectangle makes contact with $\partial B_{1-\rho}$. In any case, let $p$ be the projection of the contact point onto $J$. 
We denote by $a$ and $b$ the endpoints of $I$.  Necessarily, there exists a point $q\in\partial \Omega\setminus I$ such that $\abs{p-q}\leq 2h$. Moreover, choosing the horizontal axis to be the line $L$ through $a$ and $b$, since the point $p$ is at distance at least $\eta$ from both $a$ and $b$, and since $2h\le \eta$, we have that the first coordinate of $q$ is between those of $a$ and $b$.  

Bringing in from infinity a line parallel to $L$ from the half-plane containing $q$ until it first touches $\Omega$, we denote this first contact point by $q'$,  and we denote by $p'$ the orthogonal projection of $q'$ onto $L$. 
Since $q\in\partial \Omega$, it cannot lie inside the triangle formed by $a,b$ and $q'$. This implies that the segment $[pq]$ intersects either $[aq']$ or $[bq']$. We assume the former, which implies that $q$ is further away from $p$ than $\alpha$, the orthogonal projection of $p$ onto the line containing $a$ and $q'$. So we have
\beq
\abs{p-\alpha}\leq \abs{p-q}\leq 2h.\label{ee}
\eeq
See Figure \ref{abqp}.
\begin{figure}[H]
	\centering
	\includegraphics[width = 0.6\textwidth]{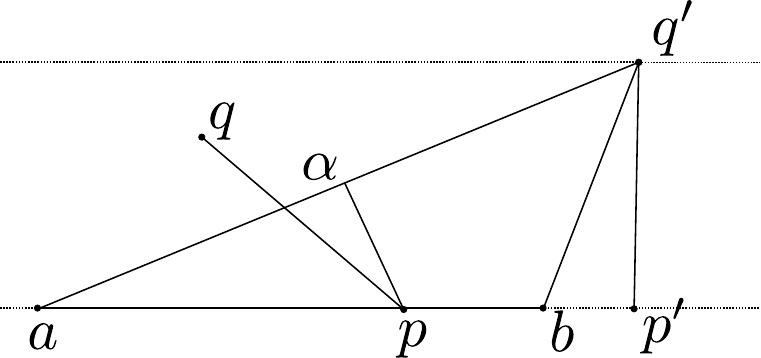}
	\caption{The configuration described above when two rectangles first touch.}
	\label{abqp}
\end{figure}

The triangles $a\alpha p$ and $aq'p'$ are similar. Therefore
  \beq
\frac{\abs{p-\alpha}}{\abs{a-p}}=\frac{\abs{p'-q'}}{\abs{a-q'}}.\label{ff}
  \eeq
Now $\abs{p'-q'}$ is at least the thickness of $\Omega$
so
\[
\abs{p'-q'}\geq C\lm^2.
\]
Furthermore, since $a$ and $q'$ lie in $B_{1-\rho}$ we know $\abs{a-q'}\leq 2$, and since $p\in J$, necessarily $\abs{p-a}\geq \eta$. Combining \eqref{ee}, \eqref{ff} with these inequalities, it follows that $h\geq C\eta\lm^2.$ Therefore, the rectangles will be disjoint and included in $\Omega$ as long as 
\beq
h < C\eta\lm^2,\label{deltacond}
\eeq
where this $C$ is $1/4$ of the constant $C$ appearing in the previous display.
 The case where the segment $[pq]$ intersects $[bq']$ also leads to \eqref{deltacond} in a similar manner.
  
Assuming \eqref{deltaeta}, \eqref{deltacond} are satisfied, consider a rectangle $\rec$ belonging to a chamber where $u_0 = p_i$ and sharing a boundary segment $J$ with a chamber where $u_0 = p_j$. Then, denoting by $s$ a coordinate orthogonal to $J$, we take $V_R=V_R(s)$ in $\rec$ given by $V_R(s) = \zeta_R(s):=\zeta_{ij}(Rs)$ for $0\leq s\leq h/2$, where  $\zeta_{ij}(0)$ is the midpoint of the heteroclinic $\zeta_{ij}$ with respect to the metric $d$. We take $\zeta_R$ to linearly interpolate between $\zeta_{ij}(Rh/2)$ and $p_i$ for $h/2\leq s\leq h$.

At this point we remark that from the assumption \eqref{posdef}, it follows that each $\zeta_{ij}$ approaches its end-states $p_i$ and $p_j$ at an exponential rate, i.e.
\beq
\abs{\zeta_{ij}(t)-p_j}\leq Ce^{-c(b)t}\quad\mbox{as} \;t\to\infty\label{expdecay}
\eeq
  for some constant $c(b)>0$, with a similar estimate holding as $t\to -\infty$.  Indeed, writing \eqref{hetero} as a first order autonomous system, say $z'=G(z)$, where
  \[z=(z_1,z_2,z_3,z_4)=\big(\zeta_{ij}^{(1)},\zeta_{ij}^{(2)},\zeta_{ij}^{(1)}\,',\zeta_{ij}^{(2)}\,'\big),
  \quad G(z)=\big(z_3,z_4,W_{z_1}(z_1,z_2),W_{z_1}(z_1,z_2)\big),\]
  one checks that at any $p_\ell\in P$, the $4\times 4$ matrix $DG(p_\ell)$ has eigenvalues $\pm\sqrt{\mu_1},\,\pm\sqrt{\mu_2}$ where $\mu_1,\mu_2\geq b>0$ are the eigenvalues of $D^2W(p_\ell)$. Thus, each $p_\ell$ represents a hyperbolic equilibrium point from the perspective of first order ODE theory and from local stable manifold theory the approach of $\zeta_{ij}$ to $p_i$ or $p_j$ as $t\to\pm\infty$ must be at an exponential rate as claimed in \eqref{expdecay}.

In light of \eqref{expdecay}, the modification can be made in such a way that 
\beq\label{enrec} E_R(V_R,\rec)\le \mH^1\left(\partial\rec\cap\partial\Omega\right) \(\frac12 c_{ij} + Ce^{-c(b) Rh/2}\).\eeq
In addition to \eqref{deltacond}, we will insist on a selection of $h$ such that
\beq
Rh\gg 1,\label{delbig}
\eeq
so that the exponential term in \eqref{enrec} will be negligible.

At this point, we consider an extension of $V_R(1,\theta)=U_R(1,\theta)$ to the annulus $\mathcal{A}_{1+\eta,1}:=B_{1+\eta}\setminus B_1$ that is constant along rays emanating from the origin. In light of \eqref{wlog}, we have
\beq\label{enann} E_R(V_R,\mathcal{A}_{1+\eta,1})\le C\eta.\eeq
Having defined $V_R$ in the rectangles of each chamber, we consider the finite collection of balls of radius $2\eta$
centered at  every one of the vertices of the polygonal curves $\partial\Omega\cap \overline{B}_{1-\rho}$ for every chamber $\Omega$. Any such vertex either coincides with the location of an endpoint of the arc $(1-\rho)\tilde{I}_k$ on $\partial B_{1-\rho}$, namely a point of discontinuity of $h_R$, or the location of an interior triple junction. Hence, the number of vertices, say $\tilde{N}_R$, is bounded by a  number $\bar{N}$ that is independent of $R$.  Referring to the collection of these balls as $\{B^j_{2\eta}\}_{j=1}^{\tilde{N}_R} $, we next define $V_R$ in the part of $B_{1-\rho}$ not belonging to any of the rectangles $\rec$ or $\cup_{j=1}^{\tilde{N}_R}B^j_{2\eta}$ by setting $V_R(x) = p_i$, when $x$ lies in a chamber associated with $p_i$. This is consistent with the boundary values on $\partial B_{1-\rho}$ since by construction $V_R(x)$ is equal to $p_i$ if $x$ is on the boundary arc $\partial B_{1-\rho}\cap \partial\Omega$ and at distance larger than $\eta$ from the ends of the arc, a condition satisfied when $x$ is not in any $B^j_{2\eta}$.

It remains to define $V_R$ inside the balls $B_{2\eta}^j$. To this end, we first note that for any ball $B^j_{2\eta}\subset B_{1-\rho}$, the boundary values of
 $V_R$ on $\partial B^j_{2\eta}$ vary between being constant or being given by a scaled heteroclinic, hence the tangential derivative of $V_R$ on this circle is bounded by $CR$. For any ball $B^j_{2\eta}$ not lying entirely in $B_{1-\rho}$, $V_R$ on $\partial B^j_{2\eta}$ maybe also be given partially by the linear interpolation construction carried out in the annulus $\mathcal{A}_{1,1-\rho}$ or, if the ball reaches $\mathcal{A}_{1+\eta,1}$, then it could partially coincide with the extension described above \eqref{enann}. However, in light of the assumption \eqref{bigrho}, in all cases the tangential derivative of $V_R$ along $\partial B^j_{2\eta}$ is bounded by $CR$ for some $C$ independent of $R$.

With this estimate in hand, we proceed to fill in the definition of $V_R$ in $\cup_{j=1}^{\tilde{N}_R}B^j_{2\eta}$ sequentially as follows. Starting with $B^1_{2\eta}$, we take $V_R$ to linearly interpolate between $V_R$, as previously defined, on $\partial B^1_{2\eta}$ and say, $p_1$ on $\partial B^j_{\eta}$. We then take $V_R\equiv p_1$ in the ball $B^j_{\eta}$. For such an interpolation, in light of assumption \eqref{biggeta}, we have
\begin{equation}
  \abs{\nabla V_R}\leq CR\;\mbox{in}\;B^1_{2\eta}\quad\mbox{and so}\quad
 E_R(V_R,B^1_{2\eta})\le CR\eta^2.  \label{enball}
\end{equation}
We then proceed to define $V_R$ in $B^2_{2\eta},\,B^3_{2\eta},\ldots $ in the same manner. However, it could happen that for some $i<j\in \{2,\ldots,N_R\}$ one has $B^j_{2\eta}\cap B^{i}_{2\eta}\not=\emptyset$. For example, this would occur if the centers of the two balls are vertices constituting endpoints of one side of a polygonal chamber of length less than $4\eta$. When such an intersection occurs, we simply define $V_R$ in the intersection of these two balls using the recipe for $V_R$ in $B^j_{2\eta}$. In light of \eqref{enball}, the $O(R)$ gradient bound is preserved through this process so that the $O(R\eta^2)$ energy bound is as well. Given that $N_R$ is uniformly bounded by $\bar{N}$, a constant independent of $R$, we can total the energy of $V_R$ inside $\cup_{j=1}^{\tilde{N}_R}B^j_{2\eta}$ to find an energetic contribution bounded by $CR\eta^2.$

\begin{center}
    \underline{Totaling the energetic cost of the construction}
\end{center}
Summing the bounds \eqref{enrec} and \eqref{enball} over rectangles and balls, we find that for any $h>0$ satisfying \eqref{deltacond} we have
\begin{eqnarray}  E_R(V_R,B_{1+\eta}) &\le& m_R \(1 + Ce^{-Rh/C}\)+CR\eta^2+ E_R(V_R,B_{1+\eta}\setminus B_{1-\rho})\nonumber\\
& \le& m_R + C\left(R\eta^2 + e^{-Rh/C} +\rho+\frac{1}{R\rho}+\rho\lm R+\eta\),\label{uplem}
\end{eqnarray}
where we have used \eqref{golovaty}, \eqref{enann}.

We may choose $\eta$, $\lm$, $\rho$, $h$ --- for instance setting $\eta = R^{-2/3}$, $\lm = R^{-1/8}$, $\rho = R^{-8/9}$ and $h = CR^{-11/12}$
--- so that for $R$ large enough the conditions\eqref{bigrho}, \eqref{biggeta}, \eqref{deltaeta}, \eqref{deltacond} and \eqref{delbig} are satisfied. For this choice, \eqref{uplem} implies that 
$E_R(V_R,B_{1+\eta}) \le  m_R  + CR^{-\alpha}$ with $\alpha = 1/8 - 1/9$. 

Finally, recalling that $V_R(1+\eta,\theta)=U_R(1,\theta)$ we must scale down this construction so that it agrees with $U_R$ on $\partial B_1$. Thus, with for example the choice $\eta=R^{-2/3}$ as above, we replace the sequence $V_R:B_{1+R^{-2/3}}\to \R^2$  with, say, $\bar{V}_R:B_1\to \R^2$ given by \[\bar{V}_R(r,\theta):=V_R\left((1+R^{-2/3})r,\theta\right).\]
Clearly such a scaling will only affect the energy bound by lower order terms, and since $E_R(U_R,B_1)\leq E_R(\bar{V}_R,B_1)$ we have established \eqref{aup1} for some $\alpha\in (0,1)$.  
\vskip.1in
\noindent
{\bf Matching lower bound}
\vskip.1in
We turn now to the task of obtaining a matching lower bound, namely  \eqref{biglb1}. 

Much in the same spirit as was done for the upper bound proof, we will replace the boundary values $U_R$ on $\partial B_1$ by much simpler boundary values through interpolation. This time, however, rather than interpolating from the boundary values $U_R$ to the much simpler boundary values $V(1-\rho,\theta)$ on $\partial B_{1-\rho}$ as we did in the argument leading up to \eqref{sandier}, we now define an {\it extension}, say $\tilde{U}_R$, of $U_R$ to a larger ball $B_{1+\rho}$ such that $\tilde{U}_R(1+\rho,\theta)=V_R(1-\rho,\theta).$
This amounts to reflecting across $\partial B_1$ the construction in the annulus $\mathcal{A}_{1,1-\rho}$ used in the upper bound argument to instead obtain an interpolation in the annulus
 $\mathcal{A}_{1+\rho,1}:=B_{1+\rho}\setminus B_1$.

We observe that precisely the same estimate \eqref{sandier} for the energetic cost of $V_R$ in the annulus $\mathcal{A}_{1,1-\rho}$  will now hold in the annulus $\mathcal{A}_{1+\rho,1}$ for the extension $\tilde{U}_R$. Hence, again making the choices taking $\rho=R^{-8/9}$ and $\lm=R^{-1/8}$ indicated below \eqref{uplem}, we conclude from \eqref{golovaty} that
\begin{align}
&\sqrt{2}\int_{B_1}\sqrt{W(U_R)}\abs{\nabla U_R}\,dx=\nonumber\\&\sqrt{2}\int_{B_{1+\rho}}\sqrt{W(\tilde{U}_R)}\abs{\nabla \tilde{U}_R}\,dx-
\sqrt{2}\int_{\mathcal{A}_{1+\rho,1}}\sqrt{W(\tilde{U}_R)}\abs{\nabla \tilde{U}_R}\,dx\nonumber\\
&
\geq \sqrt{2}\int_{B_{1+\rho}}\sqrt{W(\tilde{U}_R)}\abs{\nabla \tilde{U}_R}\,dx-E_R(\tilde{U}_R,\mathcal{A}_{1+\rho,1})\nonumber\\
&\geq
\sqrt{2}\int_{B_{1+\rho}}\sqrt{W(\tilde{U}_R)}\abs{\nabla \tilde{U}_R}\,dx-O(R^{-\alpha}),
\label{ext}   
\end{align}
where, as we did earlier, we have set $\alpha=1/8-1/9$.

We now define  three open subsets of $B_{1+\rho}$ via
 \[
\Omega_{\ell}^R:=\left\{x\in B_{1+\rho}:\,d(\tilde{U}_R(x),p_\ell)<t_\ell\right\}\;\mbox{for}\; \ell=1,2\;\mbox{and}\;3,
\]
where $t_1,$ $t_2$ and $t_3$ are defined in \eqref{t123}. 
These sets are disjoint since, for instance, $d(p,p_1)<t_1$ implies that 
$$d(p,p_2) > d(p_1,p_2) - t_1 = c_{12} - t_1 = t_2,$$
in light of \eqref{t123}.
Then we invoke the property of the metric $d$ that for any fixed
 `base point,' $p\in\R^2$, one has
\beq
\abs{\nabla_q d(p,q)}=\sqrt{W(q)}\;\mbox{for all}\;q\in\R^2,\label{nablad}
\eeq
cf. e.g. \cite{PSARMA}, along with 
 the co-area formula to estimate that
\begin{eqnarray}
&&\sqrt{2}\int_{B_{1+\rho}}\sqrt{W(\tilde{U}_R)}\abs{\nabla \tilde{U}_R}\,dx\geq \sum_{\ell=1}^3\sqrt{2}\int_{\Omega_{\ell}^R}\sqrt{W(\tilde{U}_R)}\abs{\nabla \tilde{U}_R}\,dx\nonumber\\
&&=\sum_{\ell=1}^3\int_{\Omega_{\ell}^R}\abs{\nabla d(\tilde{U}_R(x),p_\ell)}\,dx\geq\sum_{\ell=1}^3\int_{\frac{1}{R^{1/2}}}^{t_\ell}\mH^1\left(\big\{x:\,d(\tilde{U}_R(x),p_\ell)=s\big\}\right)\,ds
\nonumber\\
&&\geq \sum_{\ell=1}^3 \inf_{s\in [\frac{1}{R^{1/2}},t_\ell]}\mH^1\left(\big\{x:\,d(\tilde{U}_R(x),p_\ell)=s\big\}\right)\left(t_\ell-\frac{1}{R^{1/2}}\right)\nonumber\\
&&\geq  \sum_{\ell=1}^3 t_\ell \mH^1\left(\big\{x:\,d(\tilde{U}_R(x),p_\ell)=s_\ell^*\big\}\right)-\frac{C}{R^{1/2}},\label{lbj}
\end{eqnarray}
for some numbers $s_\ell^*\in [\frac{1}{R^{1/2}},t_\ell]$ for $\ell=1,2,3$ and some $C$ independent of $R$. (The lack of $R$ dependence for $C$ is clear since $C$ represents the minimal length of a level set  $\{d(\tilde{U}_R(x),p_\ell)=s\}$ among $s\in  [\frac{1}{R^{1/2}},t_\ell]$ within $\Omega_\ell^R.$)

With the goal of applying Theorem \ref{almostpartitions} to the triple 
$ \{x:\,d(\tilde{U}_R(x),p_\ell)<s_\ell^*\},\;\ell=1,2,3$,
 we now wish to estimate the measure of the set $B_1\setminus \cup_{\ell=1}^3 \{x:\,d(\tilde{U}_R(x),p_\ell)<s_\ell^*\}.$ Since $s_\ell^*\geq  \frac{1}{R^{1/2}}$ we have that
\beq
B_1\setminus \{x:\,d(\tilde{U}_R(x),P)<s_\ell^*\}\subset B_1 \setminus \{x:\,d(\tilde{U}_R(x),P)<\frac{1}{R^{1/2}}\}.\label{cl}
\eeq
But with an appeal to the nondegeneracy assumption \eqref{posdef} we can assert that if
\beq
d(\tilde{U}_R(x),p_\ell)\geq\frac{1}{R^{1/2}}\;\mbox{for}\;\ell=1,2\;\mbox{and}\;3,\;\mbox{then}\;W(\tilde{U}_R(x))\geq \frac{C}{R^{1/2}},\label{taylorb}
\eeq
for a constant $C$ depending on $W$.  Indeed, through an appeal to \eqref{cbds}, the definition of the metric $d$ and the convexity of $W$ near $P$, we see that for $\tilde{U}_R(x)$ in a $\beta$-neighborhood of $p_\ell\in P$, one has
\begin{align*}
 &   \frac{1}{R^{1/2}}\leq d(\tilde{U}_R(x),p_\ell) \leq 
\int_0^1\sqrt{W\left((1-t)p_\ell+t\tilde{U}_R(x)\right)}\abs{\tilde{U}_R(x)-p_\ell}\,dt\\
&\leq \int_0^1 \sqrt{tW(\tilde{U}_R(x))}\abs{\tilde{U}_R(x)-p_\ell}\,dt\leq
\frac{2}{3}\sqrt{2b}\abs{\tilde{U}_R(x)-p_\ell}^2.
\end{align*}
Then another appeal to \eqref{cbds} yields \eqref{taylorb}.

Then from the bound \eqref{eub}, it follows that
\[
C_1\geq\int_{B_1\setminus  \{x:\,d(\tilde{U}_R(x),P)<\frac{1}{R^{1/2}}\}}RW(\tilde{U}_R(x))\,dx\geq 
CR^{1/2}\abs{ B_1 \setminus \bigg\{x:\,d(\tilde{U}_R(x),P)<\frac{1}{R^{1/2}}\bigg\} }.
\]
Hence, in view of \eqref{cl}, we find that
\beq
\abs{B_1\setminus  \bigg\{x:\,d(\tilde{U}_R(x),P)<s_\ell^*\bigg\} }\leq \frac{C_0}{R^{1/2}}\label{admis}
\eeq
for some $C_0=C_0(W).$

With estimate \eqref{admis} in hand, we would like to now apply Theorem \ref{almostpartitions} to the triple $ \{x:\,d(\tilde{U}_R(x),p_\ell)<s_\ell^*\},\;\ell=1,2,3$ with the Dirichlet condition on $\partial B_{1+\rho}$ given by $h=h_R$, for $h_R$ as defined above \eqref{minEzero}.  However, to do so, we must make minor adjustments to these three sets near $\partial B_{1+\rho}$. These adjustments entail adding to or subtracting from these sets small slices of $B_{1+\rho}$ bounded by arcs of $\partial B_{1+\rho}$ and secant lines, so as to `fix' their traces to match those dictated by $h_R$. By our construction of $V(1-\rho,\theta)$, hence of $\tilde{U}_R(1+\rho,\theta)$, these adjustments occur along $N_R$ arcs that are contained in the set $\cup_{k=1}^{N_R} (1+\rho)\tilde{I}_k\setminus \cup (1+\rho)I_k$, where we recall that $N_R$ is bounded by a constant depending only on $W$. Therefore, in view of \eqref{smalltil}, one can alter the three sets so as to obtain a triple whose trace on $\partial B_{1+\rho}$ matches $h_R$ exactly, and the extra cost in perimeter will be $O(\lm)=O(R^{-1/8}).$
Furthermore, the estimate \eqref{admis} will still hold since the adjustments are lower order.

With this adjustment in hand, we now return to \eqref{ext} and \eqref{lbj}, and apply Theorem \ref{almostpartitions} with 
$\delta=\frac{C_0}{R^{1/2}}$, to obtain 
\begin{align}
&    \sqrt{2}\int_{B_1}\sqrt{W(U_R)}\abs{\nabla U_R}\,dx\nonumber \\&\geq 
    \sum_{\ell=1}^3 t_\ell \mH^1(\{x:\,d(\tilde{U}_R(x),p_\ell)=s_\ell^*\}
    -\frac{C}{R^{\alpha}}\nonumber\\
    &\geq\min\left\{E_0(u,B_{1+\rho}):\;u=h_R\;\mbox{on}\;\partial B_{1+\rho}\right\}-
\frac{\sqrt{C_0}\gamma(k)}{R^{1/4}}-\frac{C}{R^{1/2}}-\frac{C}{R^{1/8}}-\frac{C}{R^{\alpha}}\nonumber\\
&\geq
\min\left\{E_0(u,B_{1-\rho}):\;u=h_R\;\mbox{on}\;\partial B_{1+\rho}\right\}
   -
\frac{C}{R^{\alpha}}
=m_R-\frac{C}{R^{\alpha}}.
\end{align}
This is the lower bound \eqref{biglb1} we were seeking, and so the proof of Proposition \ref{equipartition} is complete.
\qed

\subsection{Convergence of the blowdowns to a minimal cone}
 With the crucial Proposition \ref{equipartition} now in hand, we apply \eqref{keyest} to \eqref{d} with $R_1=R$ and $R_2\in (R_1,2R_1]$ to obtain
 \beq
\tilde{W}(R_2)-\tilde{W}(R_1)
\geq- C_3R_1^{-\alpha/2}\label{e}
 \eeq
for $C_3$ depending only on $W$.

One consequence of \eqref{e} is:
\blemma\label{limexist} Assume $U:\R^2\to\R^2$ is a local minimizer of $E$ and define $\tilde W_R$ by \eqref{renorm}. Then the limit $L_0:=\lim_{R\to\infty}\tilde{W}(R)$ exists.
\elemma
\begin{proof}
Using \eqref{e} we will first argue that for any $\eta>0$ there exists a value $R_0>0$ such that 
\beq
\tilde{W}(R')-\tilde{W}(R)>-\eta\quad\mbox{whenever}\;R_0\leq R<R'.\label{claima}
\eeq
To see this, let $k$ be the largest integer such that $2^kR<R'.$ Then we see that
\begin{eqnarray*}
\tilde{W}(R')-\tilde{W}(R)&&=\tilde{W}(R')-\tilde{W}(2^kR)+\sum_{j=0}^{k-1}\left( \tilde{W}(2^{j+1}R)- \tilde{W}(2^{j}R)\right)\\
&&\geq - C_3\,\sum_{j=0}^{k}(2^{j}R)^{-\alpha/2 } \geq -C_3\frac{1}{R^{\alpha/2}}\left(\frac{1}{1-2^{\alpha/2}}\right).
\end{eqnarray*} 
Thus, taking $R$ large enough, we obtain \eqref{claima}.

Let us now suppose $\lim_{R\to\infty}\tilde{W}(R)$ does not exist and seek a contradiction. Since by \eqref{eub} we know that $0<\tilde{W}(R)\leq C_1$ for all $R>0$, this would imply that there exist sequences $\{R_j\}\to\infty$ and $\{R_k\}\to \infty$ such that $\lim_{R_j\to\infty}\tilde{W}(R_j)<\lim_{R_k\to\infty}\tilde{W}(R_k)$; say
\beq
\lim_{R_k\to\infty}\tilde{W}(R_k)-\lim_{R_j\to\infty}\tilde{W}(R_j)=L\quad\mbox{for some}\;L>0.\label{limcon}
\eeq
Hence, there exist $J$ and $K$ such that for $j>J$ and $k>K$ we would have
\[
\tilde{W}(R_k)-\tilde{W}(R_j)> \frac{L}{2}.
\]
However, by perhaps taking $j$ even larger we may find $R_j$ such that $R_j>R_k$ and then an application of \eqref{claima} with $\eta=\frac{L}{3}$ leads to the condition 
\[
\tilde{W}(R_j)-\tilde{W}(R_k)\geq -\frac{L}{3},
\]
and the contradiction is complete.
\end{proof}
Another consequence of \eqref{keyest} is the following:
\blemma\label{radrad} Assume $U:\R^2\to\R^2$ is a local minimizer of $E$. Then 
for every positive $\lm_1<\lm_2$ we have 
\beq
\lim_{R\to\infty}\int_{B_{ \lm_2 R}\setminus B_{\lm_1 R}}\frac{1}{\abs{x}}\abs{U_\nu}^2\,dx=0,\label{radiallim}
\eeq 
where $U_{\nu}=\nabla U\cdot \frac{x}{\abs{x}}.$

We also have
\beq
\lim_{R\to\infty}\int_{B_{\lm_2 R}\setminus B_{\lm_1R}} \frac{1}{2\abs{x}}\abs{W(U)-\frac{1}{2}\abs{\nabla U}^2}\,dx=0,\label{ep1}
\eeq
or equivalently,
\beq
\lim_{R\to\infty} \int_{B_{\lm_2 }\setminus B_{\lm_1}} \frac{1}{2\abs{x}}\abs{RW(U_R)-\frac{1}{2R}\abs{\nabla U_R}^2}\,dx=0.\label{ep2}
\eeq
\elemma
\begin{proof}
 To establish the limit \eqref{ep1}, we note that with the choices $R_1=\lm_1 R$ and $R_2=\lm_2 R$, the inequalities \eqref{b} and \eqref{c}, followed by application of Proposition \ref{equipartition} imply that
\begin{align*}
    &\int_{B_{\lm_2 R}\setminus B_{\lm_1R}} \frac{1}{2\abs{x}}\abs{W(U)-\frac{1}{2}\abs{\nabla U}^2}\,dx\\
    &\leq \sqrt{\frac{C_1\lm_2}{2\lm_1}}R^{-1/2}\left\{\int_{B_{\lm_2 R}\setminus B_{\lm_1 R}}\left(\sqrt{W(U)}-\frac{1}{\sqrt{2}}\abs{\nabla U}\right)^2\,dx\right\}^{1/2}\\
    &
    \leq \sqrt{\frac{C_1C_0\lm_2}{2\lm_1}}R^{-1/2}R^{1/2(1-\alpha)}=O(R^{-\alpha/2}).
\end{align*}

 Then \eqref{radiallim} follows from \eqref{a} and \eqref{ep1}, in light of Lemma \ref{limexist}.
\end{proof}
Now for any $\lm>0$ it follows from Lemma \ref{limexist} that
\[
\lim_{R\to\infty}\int_{B_{\lm}}RW(U_R)\,dx=\lm L_0.
\]
Combining this with \eqref{ep2} yields that for any $0<\lm_1<\lm_2$ one has 
\beq
\lim_{R\to\infty}E_R\left(U_R,B_{\lm_2}\setminus B_{\lm_1}\right)=2(\lm_2-\lm_1)L_0.
\eeq

We can rephrase \eqref{radiallim} in terms of the blowdowns as
\beq
\lim_{R\to\infty}\frac{1}{R}\int_{B_{ \lm_2 }\setminus B_{\lm_1 }}\frac{1}{\abs{x}}\abs{\nabla U_R\cdot \frac{x}{\abs{x}}}^2\,dx=0\quad\mbox{for any}\;0<\lm_1<\lm_2.\label{radiallim2}
\eeq
Now we will use this to argue that the limit of blowdowns is necessarily a cone:

\bthm\label{RadialBD} Assume $U:\R^2\to\R^2$ is a local minimizer of $E$. Let $\{R_j\}\to\infty$ be any sequence and let $\{R_{j_k}\}$ and a function $u_0\in BV(B_1;P)$ be any subsequence and subsequential $L^1$ limit guaranteed by Proposition \ref{compactness}. If we denote by $\Gamma_{i\ell}$ the phase boundary $\partial\{u_0=p_i\}\cap\partial\{u_0=p_{\ell}\}$, one has 
\beq
\nu_{i\ell}(x)\cdot x=0\quad\mbox{for every nonzero}\;x\in B_1\cap\Gamma^*_{i\ell}\;\mbox{and every}\;1\leq i<\ell\leq 3,\label{radbd} 
\eeq
where $\Gamma^*_{i\ell}$ denotes the reduced  boundary of $\Gamma_{i\ell}$ and $\nu_{i\ell}$ denotes a corresponding normal vector.
\ethm
\begin{proof}
We fix any positive number $\delta$ and note that for any $\mu\in (0,1)$ we have
\begin{eqnarray*}
\int_{B_1\setminus B_\mu}\sqrt{W(U_R)}\abs{\nabla U_R\cdot \frac{x}{\abs{x}}}\,dx&\leq&
\delta\,R\int_{B_1}W(U_R)\,dx+\frac{1}{\delta}\,\frac{1}{R}\int_{B_1\setminus B_{\mu }}\abs{\nabla U_R\cdot \frac{x}{\abs{x}}}^2\,dx\\
&\leq &\delta \tilde{W}(R)+\frac{1}{\delta}\,\frac{1}{R}\int_{B_1\setminus B_{\mu}}\frac{1}{\abs{x}}\abs{\nabla U_R\cdot \frac{x}{\abs{x}}}^2\,dx.
\end{eqnarray*}
Then sending $R\to\infty$ and invoking Lemma \ref{limexist} and \eqref{radiallim2} we conclude that
\[
\limsup_{R\to\infty}\int_{B_1\setminus B_\mu}\sqrt{W(U_R)}\abs{\nabla U_R\cdot \frac{x}{\abs{x}}}\,dx\leq \delta\,L_0,
\]
and since $\delta$ was arbitrary it follows that
\beq
\lim_{R\to \infty}\int_{B_1\setminus B_\mu}\sqrt{W(U_R)}\abs{\nabla U_R\cdot \frac{x}{\abs{x}}}\,dx=0.\label{ab}
\eeq
Next, we consider the function $x\mapsto d\left(p_1,U_R(x)\right)$. Suppressing subsequential notation, the fact that $U_{R_j}\to u_0$ in $L^1$ implies through definition \eqref{Wdist} that 
\[
d(p_1,U_{R_j})\stackrel{L^1(B_1)}{\longrightarrow} d(p_1,u_0)=\left\{\begin{matrix}0&\mbox{on}\;\{u_0=p_1\}\\
d(p_1,p_2)&\mbox{on}\;\{u_0=p_2\}\\
d(p_1,p_3)&\mbox{on}\;\{u_0=p_3\}\end{matrix}\right.,
\]
and we note that since $u_0\in BV(B_1;\R^2)$ one has $x\mapsto d(p_1,u_0(x))\in BV(B_1).$
Hence, in the sense of distributions, we have
\beq
\nabla_x d(p_1,U_{R_j}(x))\to \nabla_x d(p_1,u_0(x))=\sum_{1\leq i<\ell\leq 3}\left(d(p_1,p_\ell)-d(p_1,p_i)\right)\nu_{i\ell}\,\sh^1\rightangle \Gamma^*_{i\ell},\label{distrib}
\eeq
where this form of the distributional gradient follows, for example, from \cite{giusti}, Prop. 2.8.
Then recalling \eqref{nablad}, an elementary  chain rule calculation shows that
\[
\abs{\nabla_x d\left(p_1,U_R(x)\right))\cdot \frac{x}{\abs{x}}}\leq \sqrt{W\left(U_R(x)\right)}\abs{\nabla U_R\cdot \frac{x}{\abs{x}}},
\]
and so from \eqref{ab} we may conclude that
\beq
\lim_{R\to \infty}\int_{B_1\setminus B_\mu}\abs{\nabla_x d\left(p_1,U_R(x)\right))\cdot \frac{x}{\abs{x}}}\,dx=0.\label{ac}
\eeq
Fix now any non-zero point $x_0\in \Gamma^*_{i\ell}$ for some $1\leq i<\ell\leq 3$, and fix $\mu>0$ less than $\abs{x_0}$. Then we take an arbitary $\phi\in C_0^1\left(B_r(x_0)\right)$,
with $r$ chosen small enough so that $B_r(x_0)\subset B_1\setminus B_\mu.$ It follows from \eqref{distrib} and \eqref{ac} that
\[
\int_{\Gamma^*_{i\ell}\cap B_r(x_0)}\phi\,\nu_{i\ell}\cdot\frac{x}{\abs{x}}\,d\sh^1(x)=0.
\]
Since $\phi$ is arbitrary, we obtain the desired property \eqref{radbd}.
\end{proof}

In light of Proposition \ref{compactness}, we know that any limit of blowdowns, $u_0$, minimizes $E_0$ subject to its own boundary values. Now that we also know any limit of blowdowns is a cone, it follows immediately from Theorem \ref{Novack} that there are only three possibilities:
\bprop\label{trichotomy} Under the hypothesis and with the notations of Theorem~\ref{RadialBD},  either\\
\beq (i) \quad u_0\equiv p_i\;\mbox{for some}\; i\in\{1,2,3\},\label{con}
\eeq
or there exists a half-plane $H$ with $\partial H$ passing through the origin such that
\begin{equation}
(ii)\qquad u_0(x)=\left\{\begin{matrix}p_i&\;\mbox{in}\;H\\ p_\ell&\;\mbox{in}\;\R^2\setminus H,\end{matrix}\right.\label{half}
\end{equation}
for some $i,\ell\in\{1,2,3\}$ with $i\not=\ell$,
or
\beq
(iii)\qquad u_0\;\mbox{takes the form}\;\eqref{best}\label{tripop}
\eeq
with the three sectors $S_1,\,S_2$ and $S_3$ having opening angles $\alpha_1,\,\alpha_2$ and $\alpha_3$ satisfying the condition \eqref{tj}. 
\eprop
\begin{rmrk}\label{alternateproof}
 It was recently pointed out to us by Michael Novack that there is a different avenue available to reach the conclusion that any subsequential limit $u_0$ of blowdowns of a local minimizer of $E$ must satisfy either \eqref{con}, \eqref{half} and \eqref{tripop}. This alternative argument utilizes, among other tools, a monotonicity formula for minimizing partitions adapted to this setting, along with available regularity theory such as that described in Theorem \ref{Novack}, to show that the only locally minimizing partitions of the plane with respect to the energy $E_0$ are partitions fitting one of these three descriptions. Then coupled with Proposition \ref{compactness} we would reach the same conclusion as that of Proposition \ref{trichotomy}. See also \cite{Alikexp} for a presentation of this property of minimizing partitions.  
\end{rmrk}

\section{Eliminating the half-plane}\label{halfplane}
We will now argue that in our setting, neither \eqref{con} or \eqref{half} are possible, leaving \eqref{tripop} as the only option, thus leading to a proof of our main result, Theorem \ref{main}. 

We will first prove a `clearing-out' type of result, saying that sufficiently low energy in a ball implies uniform nearness to a potential well on a smaller ball. We remark that a result of this type for mere solutions to \eqref{vRPDE} is established in \cite{Bethuel}, Prop. 6.4, but since the proof is considerably simpler in the setting of local minimizers, we present a proof in this setting below.
\bprop \label{newclearing} For any $R>0$, let $z_R$ be a local minimizer of $E_R$ satifying a gradient bound $\abs{\nabla z_R}\leq C_1R$ for some $C_1>0$. Then there exists a number $\eta$ depending only on $W$ such that if
\[
E_R(z_R,B_{r_0})< \eta\quad\mbox{on some ball}\;B_{r_0},
\]
then there exists a point $p_\ell\in P$ and a value $\Bar{R}>0$ such that for all $x\in B_{r_0/2}$, one has the uniform estimate
\[
\abs{z_R(x)-p_\ell}<\sqrt{3}\bigg(\frac{2}{b}\bigg)^{1/4}\bigg(E_R(z_R,B_{r_0})\bigg)^{1/2}
\]
for all $R\geq \Bar{R},$ where $b$ is the constant appearing in \eqref{cbds} and $\Bar{R}=\Bar{R}(b,r_0)$.
\eprop

\begin{proof}
With no loss of generality, we take $B_{r_0}$ to be centered at the origin.
For ease of notation, we will write
$e_R:=E_R(z_R,B_{r_0})$, so that our hypothesis is that $e_R<\eta$ with $\eta$ to be specified shortly.

We begin with the observation that if $q\in\R^2$ is any point such that $\abs{q-p_\ell}\leq \beta$ for some $p_\ell\in P$, then invoking \eqref{Wdist} and \eqref{cbds}, we have
\begin{align*}
 \sqrt{\frac{b}{2}}&\min_{\gamma(0)=p_\ell,\gamma(1)=q}\int_0^1\abs{\gamma(t)-p_\ell}\abs{\gamma'(t)}\,dt\leq  d(q,p_\ell)\leq\\
&\sqrt{b}\min_{\gamma(0)=p_\ell,\gamma(1)=q}\int_0^1\abs{\gamma(t)-p_\ell}\abs{\gamma'(t)}\,dt .
 \end{align*}
 Hence, 
 \beq
 \sqrt{\frac{b}{2}}\abs{q-p_\ell}^2\leq d(q,p_\ell)\leq \sqrt{b}\abs{q-p_\ell}^2.\label{compare}
\eeq

Applying the Mean Value Theorem, the assumption $e_R< \eta$ on $B_{r_0}$ allows us to find a radius, say $r^*\in (r_0/2,r_0)$, such that
\beq
E_R(z_R,\partial B_{r^*})<2e_R<2\eta.\label{twoeta}
\eeq
Consequently, for any two points $x_1,x_2\in\partial B_{r^*}$ one has
\[
d\left(z_R(x_1),z_R(x_2)\right)\leq \sqrt{2}\int_{\partial B_{r^*}}\sqrt{W(z_R)}\abs{\nabla z_R}\,ds<2e_R.
\]
Then the fact that $\int_{\partial B_{r^*}}RW(z_R)\,ds<2e_R$ implies that for $R$ large enough there exists a point, say $x_R\in
\partial B_{r^*},$ such that $d(z_R(x_R),p_\ell)<e_R$ for some $p_\ell\in P.$ Consequently, it follows from the triangle inequality that 
\begin{equation}
 d(z_R(x),p_\ell)<3e_R\quad\mbox{for all}\;x\in \partial B_{r^*}. \label{threeeta}
\end{equation}
We now impose the condition
\beq
\eta\leq\sqrt{\frac{b}{2}}\,\frac{\beta^2}{3}.\label{firsteta}
\eeq
It follows that 
\begin{equation}
\abs{z_R(x)-p_\ell}\leq\beta\quad\mbox{for all}\; x\in\partial B_{r^*}.\label{unizr}
\end{equation}
Otherwise, for some $x\in\partial B_{r^*}$, we would have
\[
d(z_R(x),p_\ell)\geq\min_{\{q:\,\abs{q-p_\ell}=\beta\}}d(q,p_\ell)\geq\sqrt{\frac{b}{2}}\beta^2, 
\]
contradicting \eqref{threeeta}, given that $e_R<\eta$.

Having established \eqref{unizr}, we now appeal to the local minimality of $z_R$ by constructing a competitor, say $v_R$, in $B_{r^*}$ that linearly interpolates on the annulus $\mathcal{A}_{r^*,r^*-\frac{1}{R}}$ between $z_R(x)$ on $\partial B_{r^*}$ and $p_\ell$ on $\partial B_{r^*-\frac{1}{R}}$
via the formula
\beq
v_R(x):=\lm_R(\abs{x})z_R\left(r^*\frac{x}{\abs{x}}\right)+\big(1-\lm_R(\abs{x})\big)p_\ell,
\label{vsubRdefn}
\eeq
for $r^*-\frac{1}{R}\leq \abs{x}\leq r^*$, where $\lm_R(r):=R(r-r^*)+1$.
We compute that
\beq
\nabla v_R(x)=\lm_R(\abs{x})\nabla  z_R\left(r^*\frac{x}{\abs{x}}\right)+R\frac{x}{\abs{x}}\otimes 
\left(z_R\left(r^*\frac{x}{\abs{x}}\right) -p_\ell\right),
\eeq
so that 
\beq
\abs{\nabla v_R(x)}^2\leq 2\abs{\nabla  z_R\left(r^*\frac{x}{\abs{x}}\right)}^2+2R^2\abs{z_R\left(r^*\frac{x}{\abs{x}}\right) -p_\ell}^2.\label{vrgrad}
\eeq

Property \eqref{unizr} guarantees that this interpolation always yields values inside the ball of radius $\beta$ about $p_\ell$ so that $v_R$ takes its values in the region where $W$ is convex.
This convexity allows us to invoke \eqref{cbds}. Then through the local minimality of $z_R$, along with \eqref{twoeta} and \eqref{vrgrad}, we find that
\begin{align}
 &E_R(z_R,B_{r^*})\leq E_R(v_R,B_{r^*})=\int_{\mathcal{A}_{r^*,r^*-\frac{1}{R}}} 
 RW(v_R)+\frac{1}{2R}\abs{\nabla v_R}^2\,dx\nonumber\\
 &\leq \int_{r^*-\frac{1}{R}}^{r^*}\int_{\partial B_r}\left\{R\lm(r)W\left(z_R\big(r^*\frac{x}{\abs{x}}\big)\right)+\right.\nonumber\\
 &\left.\frac{1}{2R}\left(2\abs{\nabla  z_R\left(r^*\frac{x}{\abs{x}}\right)}^2+\frac{2R^2}{b}
 W(z_R\left(r^*\frac{x}{\abs{x}}\right)\right)\right\}
 \,ds\,dr\leq 2\eta\,(\max\{2,1+\frac{1}{b}\})\frac{1}{R}.\nonumber\\
 \label{smaller}
\end{align}
Next we wish to argue that the maximum of the quantity $\abs{z_R(x)-p_\ell}$ over the set $\overline{B_{r^*}}$ must occur for $x\in \partial B_{r^*}$. We will argue by contradiction. There are two cases to consider:\\
\noindent Case 1: The maximum occurs at a point $x^*\in B_{r^*}$ such that
$\abs{z_R(x^*)-p_\ell}< \beta$. We see this is impossible through an appeal to the maximum principle applied to the function $f(x):=\frac{1}{2}\abs{z_R(x)-p_\ell}^2$. Indeed, a simple calculation yields that
\[
\Delta f(x)=R^2\,\nabla_u W(z_R)\cdot (z_R-p_\ell)+\abs{\nabla z_R}^2>0,
\]
in light of the strict convexity of $W(q)$ when $\abs{q-p_\ell}\leq \beta$. \\
\noindent Case 2: The maximum occurs at a point $x^*\in B_{r^*}$ such that
$\abs{z_R(x^*)-p_\ell}\geq \beta$. Since necessarily any local minimizer satisfies a gradient estimate $\abs{\nabla z_R}\leq C_1R$ for some constant $C_1>0$, it follows that
\[
\abs{z_R(x)-p_\ell}> \frac{\beta}{2}\quad \mbox{for all}\;x\;\mbox{such that}\;\abs{x-x^*}<\frac{\beta}{2C_1R}.
\]
Thus, denoting 
\[
C_\beta:=\min_{\{q\in\R^2:\,{\rm dist}\,(q,P)\geq\frac{\beta}{2}\}}W(q)>0,
\]
we obtain
\[
E_R(z_R,B_{r^*})\geq \int_{\{x:\,\abs{x-x^*}<\frac{\beta}{2C_1R}\}}RW(z_R)\,dx\geq
C_\beta R\pi\left(\frac{\beta}{2C_1R}\right)^2=\frac{\pi \beta^2 C_\beta}{4C_1^2 }\,\frac{1}{R},
\]
which will contradict \eqref{smaller}, if in addition to \eqref{firsteta}, we insist that $\eta$ satisfies, say
\beq
2\eta\max\{2,1+\frac{1}{b}\}\leq \frac{\pi \beta^2 C_\beta}{5C_1^2 }.\label{finalone}
\eeq
Hence, assuming $\eta$ satisfies \eqref{firsteta} and \eqref{finalone}, we have argued that the maximum of $\abs{z_R-p_\ell}$ on $B_{r^*}$ must occur on $\partial B_{r^*}$, and so from   \eqref{compare} and \eqref{threeeta}, the conclusion of the Proposition follows.
\end{proof}

An easy consequence of this result is 

\bprop\label{uniform} Assume $U:\R^2\to\R^2$ is a local minimizer of $E$ and $\{R_j\}\to\infty$ is such that  $\{U_{R_j}\}$ converges in $L^1_{\rm loc}$ to $u_0\in BV_{{\rm loc}}(\R^2;P)$. Then 
$\{U_{R_j}\}$ converges to $u_0$  locally uniformly outside the support of $\nabla u_0$.
\eprop
\begin{proof}
Assume $x_0$ does not belong to the support of $\nabla u_0$. We need to prove that $U_{R_j}$ converges uniformly to $u_0$ in a neighbourhood of $x_0$.

The limit $u_0$ is identically equal to one of the wells, say $p_1$, in a ball $B_r(x_0)$ for some $r>0$. Using Fatou's Lemma as in the proof of Lemma~\ref{centered} there exists a radius $t\in(0,r)$ and a subsequence still denoted $\{R_j\}$ such that 
$$\lim_{R_j\to \infty}\|U_{R_j} - p_1\|_{L^1(\partial B_t)} =0,\quad \limsup_{R_j\to \infty} E_{R_j}(U_{R_j},\partial B_t) < +\infty.$$
Since $U_{R_j}$ minimizes $E_{R_j}$ and satisfies \eqref{gamcon} and \eqref{DirData} on $\partial B_t$, we can apply condition \eqref{reco} of Theorem \ref{Gazoulis} to assert that
\[
E_{R_j}(U_{R_j},B_t)\to E^h_0(u_0,B_t)=0\;\mbox{as}\;R_j\to\infty,
\]
where $h$ is the trace of $u_0$ on $\partial B_t$, that is, $h=p_1$.
This allows us to apply Proposition \ref{newclearing} to conclude that $U_{R_j}$ is converging uniformly to $p_1$ on $B_{t/2}$. Since the subsequential limit is unique, the whole sequence converges uniformly to $p_1$ on $B_{t/2}$, proving the proposition.\end{proof}

Now we prove 
\bprop\label{third} Assume $U:\R^2\to\R^2$ is a local minimizer of $E$ in the sense of \eqref{italian} such that ${\rm {dist}}(U(0),\Lambda)>0$ for $\Lambda$ given by \eqref{Lambda}.
For any sequence $\{R_j\}\to\infty$, let $\{R_{j_k}\}$ and $u_0\in BV_{{\rm loc}}(\R^2;P)$ be  any subsequence and subsequential limit of $\{U_{R_{j_k}}\}$, guaranteed to exist by Proposition \ref{compactness}. Then $u_0$ takes the form \eqref{best}.
\eprop
\begin{proof} We need to rule out \eqref{con} and \eqref{half}.

Suppose first, by way of contradiction, that $u_0\equiv p_\ell$ for some $\ell\in\{1,2,3\}$. Then Proposition~\ref{uniform} implies that $U_{R_{j_k}}(0)\to U(0)=p_\ell$, contradicting ${\rm {dist}}(U(0),\Lambda)>0$. Thus, possibility \eqref{con} is eliminated.

Next, we suppose by way of contradiction that $U_{R_{j_k}}\to u_0$ in $L^1_{\rm{loc}}$ for $u_0$ satisfying \eqref{half} for some $i,\ell\in\{1,2,3\}$ with $i\not=\ell.$ With no loss of generality we will take $i=1,\,\ell=2$ and the halfplane $H$ to be $\{(x_1,x_2)\in\R^2:\,x_2<0\}$ so that our contradiction hypothesis takes the form
\begin{equation}
U_{R_{j_k}}\stackrel{L^1_{\rm{loc}}}{\longrightarrow} u_0
=\left\{\begin{matrix}p_1&\;\mbox{in}\;\{(x_1,x_2):\;x_2<0\}\\ p_2&\;\mbox{in}\;\{(x_1,x_2):\;x_2>0\}.\end{matrix}\right.\label{concon}
\end{equation}

This possibility is ruled out if we prove Theorem~\ref{halfplanes}, since the latter  implies that $U(x_1,x_2) = \zeta_{12}(x_2+\Delta)$ for some $\Delta\in \R$ and therefore $U(0)\in\zeta_{12}(\R)$, which contradicts the hypothesis that ${\rm {dist}}(U(0),\Lambda)>0$ for $\Lambda$ given by \eqref{Lambda}.
\end{proof}

\begin{proof}[Proof of Theorem~\ref{halfplanes}]
We assume that $U$ is a locally minimizing entire solution and that $U_{R_j}$ converges as $j\to+\infty$ for some subsequence for $R_j\to+\infty$ to the function $u_0$ defined in \eqref{thehalf}. We break the proof that $U(x_1,x_2) = \zeta_{12}(x_2+\Delta)$ for some $\Delta\in\R$ into several steps.
\vskip.1in
\noindent
{\bf 1.} We begin by identifying a circle on which $U$ has ``well-controlled" boundary values. To this end, we note that the argument in the proof of Proposition \ref{compactness} leading to \eqref{DirBd} and \eqref{limdir} applies just as well to assert the existence of {\it two} values, $1\leq \lm_1<\lm_2\leq 2$, for which these two properties hold on both $\partial B_{\lm_1}$ and $\partial B_{\lm_2}$. We then let $\mathcal{A}_{\lm_2,\lm_1}$ denote the annulus $B_{\lm_2}\setminus B_{\lm_1}$, and invoke the assumption \eqref{concon}. It follows from the $\Gamma$-convergence of $\tilde{E}_{R_j}(\cdot,\mathcal{A}_{\lm_2,\lm_1})$ to $E^h_0(\cdot,\mathcal{A}_{\lm_2,\lm_1})$ with $h=$ trace of $u_0$ on $\partial \mathcal{A}_{\lm_2,\lm_1}$, along with the minimality of $U_{R_j}$ in the annulus,  that
\beq
E_{R_j}(U_{R_j},\mathcal{A}_{\lm_2,\lm_1})\to E^h(u_0,\mathcal{A}_{\lm_2,\lm_1})=2(\lm_2-\lm_1)c_{12}\;\mbox{as}\;R_j\to\infty.
\label{annul}
\eeq
 Rewriting this in terms of $U$ we have that
\beq
E(U, \mathcal{A}_{\lm_2 R_j,\lm_1 R_j})=2(\lm_2-\lm_1)  c_{12}R_j+o(R_j)\;\mbox{as}\;R_j\to \infty,\label{bcon}
\eeq
or equivalently,
\[
\frac{1}{(\lm_2-\lm_1) R_j}\int_{\lm_1 R_j}^{\lm_2 R_j}E\left(U,\partial B_r\right)\,dr= 2 c_{12}+o(1).
\]
Thus, by the Mean Value Theorem, there must exist a sequence of radii $\{\rho_j\}\to\infty$ with $\rho_j\in (\lm_1 R_j,\lm_2 R_j)$ such that
\beq
E\left(U,\partial B_{\rho_j}\right) = 2 c_{12}+o(1).\label{gbc}
\eeq

From Proposition~\ref{uniform}, we have that for any $\tau>0$
\beq
\max\left\{\abs{U(x)-p_1}:\,x\in  \overline{ B}_{\rho_j}\cap \{(x_1,x_2):\,x_2\geq \rho_j\tau\}\right\}\to 0
\label{unif1}
\eeq
and
\beq
\max\left\{\abs{U(x)-p_2}:\,x\in  \overline{ B}_{\rho_j}\cap \{(x_1,x_2):\,x_2\leq -\rho_j\tau\}\right\}\to 0\quad\mbox{as}\;\rho_j\to\infty.
\label{unif2}
\eeq
\vskip.1in
\noindent
{\bf 2.} We will next argue that the restriction of $U$ to the circle $\partial B_{\rho_j}$ is approaching two copies of the geodesic $\zeta_{12}$ as $\rho_j\to\infty$.  
We denote by $\partial B_{\rho_j}^+$ the right half-circle of $\partial B_{\rho_j}$, and by $\partial B_{\rho_j}^-$ the left half-circle.
From \eqref{unif1} and \eqref{unif2} and the continuity of the metric distance $(p,q)\mapsto d(p,q)$ (cf. \eqref{Wdist}), it follows that 
\beq
E\left(U,\partial B_{\rho_j}^+\right)
\geq \sqrt{2}\int_{\partial B_{\rho_j}^+} \sqrt{W(U)}\abs{\frac{\partial U}{\partial s}}\,ds\geq  d(p_1,p_2)-o(1)=c_{12}-o(1)\;\mbox{as}\;\rho_j\to\infty\label{j1}
\eeq
and similarly, 
\beq
E\left(U,\partial B_{\rho_j}^-\right)
\geq \sqrt{2}\int_{\partial B_{\rho_j}^-} \sqrt{W(U)}\abs{\frac{\partial U}{\partial s}}\,ds\geq  d(p_1,p_2)-o(1)=c_{12}-o(1)\;\mbox{as}\;\rho_j\to\infty\label{j2}
\eeq
Combining these last two inequalities with \eqref{gbc}, we observe that, in fact, we have
\beq
E\left(U,\partial B_{\rho_j}^+\right)
\to c_{12}, \quad
\mbox{as} \;\rho_j\to\infty\label{minseq1}
\eeq
and 
\beq
E\left(U,\partial B_{\rho_j}^-\right)
\to c_{12} \quad
\mbox{as} \;\rho_j\to\infty.\label{minseq2}
\eeq
Now, following the general scheme in \cite{AFS}, sect. 2.3 we fix a positive number $d_0$ less than say, half the minimal distance between any two of the three wells, and define the energy level $W_0$ via
\beq
W_0:=\min\bigg\{W(p):\,\abs{p-p_\ell}=d_0,\; \ell=1,2,3\bigg\}>0.\label{Wzero}
\eeq 
 It follows from \eqref{unif1} and \eqref{unif2} that there must exist a point $x_j^+\in\partial B_{\rho_j}$ such that $W\left(U(x_j^+)\right)=W_0.$
 Then for $\theta$  denoting the polar angle made with the positive $x_1$-axis, we introduce the angle $\theta_j^+$ via $x_j^+=\rho_je^{i\theta_j^+}$.
We point out that necessarily 
\beq
\theta_j^+\to 0\;\mbox{as}\;j\to\infty,\label{antipod}
\eeq
since $W\left(U(\rho_je^{i\theta})\right)\to 0$ at angles $\theta$ bounded away from zero in light of \eqref{unif1} and \eqref{unif2}.

Then we introduce an arclength coordinate $s$ along $\partial B_{\rho_j}^+$  with $s=0$ corresponding to this $x_j$ via 
\beq
s:=\rho_j(\theta-\theta_j^+)\label{stheta},
\eeq and define the continuous extension, say $\tilde{U}_j:(-\infty,\infty)\to\R^2$  of $U$ along $\partial B_{\rho_j}^+$, expressed as a function of arclength variable \eqref{stheta}, through the formula
\[
\tilde{U}_j(s)=\left\{\begin{matrix} p_1&\mbox{for}\;s\geq \rho_j(\pi/2-\theta_j^+)+1\\
\mbox{linear}&\;\mbox{for}\;\rho_j(\pi/2-\theta_j^+)<s<\rho_j(\pi/2-\theta_j^+)+1\\
U\left( \rho_je^{i(\frac{s}{\rho_j}+\theta_j^+)}\right)&\;\mbox{for}\;\rho_j(-\pi/2-\theta_j^+)\leq s\leq \rho_j(\pi/2-\theta_j^+)\\
\mbox{linear}&\;\mbox{for}\;\rho_j(-\pi/2-\theta_j^+)-1<s<\rho_j(-\pi/2-\theta_j^+)\\
p_2&\mbox{for}\;s\leq \rho_j(-\pi/2-\theta_j^+)-1,
\end{matrix}\right.
\]
so that, in particular, we have 
\beq
W(\tilde{U}_j(0))=W_0.\label{centered1}
\eeq 
Now again appealing to  \eqref{unif1} and \eqref{unif2}, we observe that the energy over the intervals of linear interpolation vanish in the limit; that is,
\[
\lim_{j\to\infty}
E\left(\tilde{U}_j,[\rho_j(\pi/2-\theta_j^+),\rho_j(\pi/2-\theta_j^+)+1]\right)
=0,
\]
with a similar result for the integral over the interval $[\rho_j(-\pi/2-\theta_j^+)-1,\rho_j(-\pi/2-\theta_j^+)]$.
Hence, through \eqref{minseq1}, we see that $\{\tilde{U}_j\}$ constitutes a minimizing sequence for \eqref{other}. Invoking \eqref{centered1},
it is then straightforward to establish that 
\beq
\tilde{U}_j\to \zeta_{12}\quad\mbox{in}\;H^1_{\rm{loc}}(\R)\;\mbox{as}\;j\to\infty,\label{Honegeo}
\eeq
with $W(\zeta_{12}(0))=W_0$ setting the particular translate of the heteroclinic $\zeta_{12}$. Again we refer to \cite{AFS} for details. (We recall here that we are assuming uniqueness of the three heteroclinic connections.) We then note that using \eqref{minseq2}, we can apply precisely the same argument along the left half-circle $\partial B_{\rho_j}^-$ to get $H^1_{\rm{loc}}$-convergence to $\zeta_{12}$ analogous to \eqref{Honegeo} there as well. In particular, in analogy with  \eqref{antipod} and \eqref{centered1}, we note that there exists an angle made with the negative $x_1$-axis, which we denote by $\theta_j^-$, that plays the same role as did $\theta_j^+$; namely, 
\beq
W\left(U\left(\rho_je^{i(\pi-\theta_j^-)}\right)\right)=W_0\quad\mbox{and}\quad 
\theta_j^-\to 0\;\mbox{as}\;j\to\infty.\label{antipod2}
\eeq

Also, referring back to  \eqref{unif1} and \eqref{unif2}, with, say $\tau=1/2$,  it follows that 
\beq
\abs{U\left(\rho_je^{i\theta}\right)-p_1}\to 0\;\mbox{for}\;\frac{\pi}{6}\leq \theta\leq \frac{5}{6}\pi\quad\mbox{and}\quad
\abs{U\left(\rho_je^{i\theta}\right)-p_2}\to 0\;\mbox{for}\;-\frac{5}{6}\pi\leq \theta\leq -\frac{\pi}{6}.\label{ponetwo}
\eeq
\vskip.1in\noindent
{\bf 3.} Letting $L_j$ denote the line passing through the two points $x_j^+=\rho_je^{i \theta_j^+}$ and $x_j^-:=\rho_je^{i(\pi-\theta_j^-)}$, we define the sequence $V_j:\R^2\to\R^2$ via
\beq
V_j(x):=\zeta_{12}\left({\rm{dist}}\,(x,L_j)\right).\label{zetaslanted}
\eeq
Our goal in this step is to interpolate between $U$ on $\partial B_{\rho_j}$ and $V_j$ on $\partial B_{\rho_j-1}$ so that the energetic cost in the annulus between these two circles is no greater than $2c_{12}+o(1)$. Again we will focus on the right half-annulus, with a similar calculation applying to the left half-annulus.

To this end, we first recall the exponential approach of $\zeta_{12}=\zeta_{12}(t)$ to $p_1$ for $t\gg1$ and to $p_2$ for $t\ll -1$, cf. \eqref{expdecay}. Fixing any $\eta>0$, it follows that we can find an interval, say  $[-a_\eta,a_\eta]$,  such that
\beq
E\left(\zeta_{12},[-a_\eta,a_\eta]\right)
\leq c_{12},
\label{w1}
\eeq
and such that
\beq\abs{\zeta_{12}(t)-p_1}<\eta\;\mbox{for}\;t>a_\eta,\quad\mbox{and}\quad \abs{\zeta_{12}(t)-p_2}<\eta\;\mbox{for}\;t<-a_\eta.
\label{w11}
\eeq

Then in view of the $H^1$-convergence of $\tilde{U}_j$ to $\zeta_{12}$ for $s\in [-a_\eta,a_\eta]$ guaranteed by \eqref{Honegeo}, we can assert that for $\rho_j$ large enough, one also has
\beq
    c_{12}-\eta\leq 
    E\left(\tilde{U}_j,[-a_\eta,a_\eta]\right)
    \leq c_{12}+\eta.
\label{w2}
\eeq
As consequence of \eqref{w1} and \eqref{w2}, along with \eqref{other}, \eqref{minseq1} and \eqref{minseq2}, it also follows that 
\beq
E\left(\zeta_{12},\R\setminus [-a_\eta,a_\eta]\right)
\leq \eta
\label{w3}
\eeq
and 
\beq
E\left(\tilde{U}_j,\R\setminus[-a_\eta,a_\eta]\right)
\leq \eta.
\label{w4}
\eeq
Of course, the analog of estimates \eqref{w2} and \eqref{w4} hold with the boundary values of $U$ along $\partial B_{\rho_j}^+$ replaced by those along $\partial B_{\rho_j}^-$ as well.

Now we define the linear interpolation in the annulus $\{x:\,\rho_j-1\leq \abs{x}\leq \rho_j\}$  with $\lm_j(r):=r-\rho_j+1$, via the formula
\beq
Z_j(x):=\lm_j(\abs{x})U\left(\rho_j\frac{x}{\abs{x}}\right)+\left(1-\lm_j(\abs{x})\right)V_j(x).
\label{bigZdefn}
\eeq

We will divide up the energy of $Z_j$ in the right half-annulus into two parts as follows:
\begin{eqnarray}
&&\int\int_{{\rm{right\; half-annulus}}}W(Z_j)+\frac{1}{2}\abs{\nabla Z_j}^2\,dx=\nonumber\\
&&\int_{\rho_j-1}^{\rho_j}\int_{\left\{x\in \partial B_r^+:\,   {\rm{dist}}\,(x,L_j) <a_\eta\right\}    }
\left\{
\quad\cdot\quad \right\}ds\,dr+ \int_{\rho_j-1}^{\rho_j}\int_{\left\{x\in \partial B_r^+:\,   {\rm{dist}}\,(x,L_j) \geq a_\eta\right\}    }
\left\{
\quad\cdot\quad \right\}ds\,dr\nonumber\\
&&=:I+II.\label{onetwo}
\end{eqnarray}
Regarding integral $I$, we note that for any $r$ such that  $\rho_j-1<r<\rho_j$ and any $x\in\partial B_r$ such that ${\rm{dist}}\,(x,L_j) <a_\eta$,
if we denote by $s(x)$ the arclength along $\partial B_r$ from $x$ to $\partial B_r\cap L_j$, then one easily checks that
\beq
0\leq s(x)-{\rm{dist}}\,(x,L_j)=O\left(\frac{1}{\rho_j}\right).\label{comp}
\eeq
Combining this with the uniform convergence of $\tilde{U}_j$ to $\zeta_{12}$ on $[-a_\eta,a_\eta]$ guaranteed by \eqref{Honegeo}, we obtain that 
\beq
\abs{U\left(\rho_j\frac{x}{\abs{x}}\right)-V_j(x)}\leq \abs{U\left(\rho_je^{i(\frac{s(x)}{\rho_j}+\theta_j^+)}\right)-\zeta_{12}\left(s(x)\right)}
+\abs{\zeta_{12}\left(s(x)\right)-\zeta_{12}\left({\rm{dist}}\,(x,L_j)\right)}=o(1),\label{UVj}
\eeq
which then also implies that
\beq
Z_j(x)=\zeta_{12}\left(s(x)\right)+o(1).\label{Zz}
\eeq
Now
\[
\abs{\nabla U\left(\rho_j\frac{x}{\abs{x}}\right)}^2=\abs{\tilde{U}_j\,'\left(s(x)\right)}^2,
\]
and for $x$ in the domain of integration of integral $I$, with an appeal to \eqref{comp}, we have
\beq
\abs{\nabla V_j(x)}^2=\abs{\zeta_{12}'\left({\rm{dist}}\,(x,L_j)\right)}^2=\abs{\zeta_{12}'(s)}^2+o(1).\label{Vz}
\eeq
Therefore, since
\beq
\nabla Z_j(x)=\lm_j(\abs{x})\nabla  U\left(\rho_j\frac{x}{\abs{x}}\right)+\left(1-\lm_j(\abs{x})\right)\nabla V_j(x)+\frac{x}{\abs{x}}\otimes 
\left(U\left(\rho_j\frac{x}{\abs{x}}\right)-V_j(x)\right),\label{Zgrad}
\eeq
we may compute that
\begin{eqnarray}
&&\abs{\nabla Z_j(x)}^2\leq
\abs{\lm_j(\abs{x})\nabla  U\left(\rho_j\frac{x}{\abs{x}}\right)+\left(1-\lm_j(\abs{x})\right)\nabla V_j(x)}^2\nonumber\\
&& +C\left\{\abs{U\left(\rho_j\frac{x}{\abs{x}}\right)-V_j(x)}^2  + \abs{U\left(\rho_j\frac{x}{\abs{x}}\right)-V_j(x)}\left(\abs{ \nabla  U\left(\rho_j\frac{x}{\abs{x}}\right)}+\abs{\nabla V_j(x)}\right)\right\}\nonumber\\
&&
\leq 
\lm_j(\abs{x})\abs{\nabla  U\left(\rho_j\frac{x}{\abs{x}}\right)}^2+\left(1-\lm_j(\abs{x})\right)\abs{\nabla V_j(x)}^2
+o(1),\label{string}
\end{eqnarray}
where the last inequality follows from the convexity of $\abs{\,\cdot\,}^2$, along with the use of \eqref{comp} and \eqref{UVj}, after noting that both $\abs{\nabla U}$ and $\abs{\nabla V_j}$ are uniformly bounded.

As a consequence of \eqref{Zz}, \eqref{string} and another appeal to
 \eqref{Honegeo} we find that 
 \begin{eqnarray*}
&& \int_{\rho_j-1}^{\rho_j}\int_{\left\{x\in \partial B_r^+:\,   {\rm{dist}}\,(x,L_j) <a_\eta\right\}    }
W(Z_j(x))+\frac{1}{2}\abs{\nabla Z_j(x)}^2\,ds\,dr\leq\\
&&  \int_{\rho_j-1}^{\rho_j}\int_{-a_\eta}^{a_\eta} W(\zeta_{12}(s))+\frac{1}{2}\left\{
\lm_j(r)\abs{\tilde{U}_j\,'(s)}^2+\left(1-\lm_j(r)\right)\abs{ \zeta_{12}'(s)}^2\right\}\,ds\,dr+o(1)\\
&&= \int_{\rho_j-1}^{\rho_j}\int_{-a_\eta}^{a_\eta}W(\zeta_{12}(s))+\frac{1}{2}
\abs{ \zeta_{12}'(s)}^2\,ds\,dr+o(1).
\end{eqnarray*}
Hence, it follows from \eqref{w1} that
\beq
I\leq c_{12}+\eta,\label{ione}
\eeq
with a corresponding inequality holding for the energy of $\{Z_j\}$ over the region in the left portion of the annulus given by
\[
\left\{x=(x_1,x_2):\;\rho_j-1<\abs{x}<\rho_j,\;{\rm{dist}}\,(x,L_j)<\eta,\;x_1<0\right\}.
\]

It remains to estimate integral $II$ in \eqref{onetwo}. We will argue that this integral is $o(1)$ by relying on \eqref{w3} and \eqref{w4}. We first claim that 
\beq
\int_{\rho_j-1}^{\rho_j}\int_{\left\{x\in \partial B_r^+:\,   {\rm{dist}}\,(x,L_j) \geq a_\eta\right\}    }
W(Z_j(x) )ds\,dr=O(\eta).\label{clam}
\eeq

With an eye towards appealing to \eqref{convexngd}, we observe that necessarily for any $x\in\partial B_{\rho_j}$ such that $ {\rm{dist}}\,(x,L_j) \geq a_\eta$, one has either
$\abs{U(x)-p_1}< \beta$ or  $\abs{U(x)-p_2}< \beta$. Otherwise, the energetic cost incurred by transitioning along $\partial B_{\rho_j}$ from $\abs{U(x)-p_\ell}= \beta$ to 
$ U(x)\approx p_\ell$ would be $O(1),$ violating \eqref{w4}, a transition that must occur in light of  \eqref{ponetwo}.

Since 
 \eqref{w11} implies that $V_j(x)$ also takes values in a region of convexity of $W$ for $x\in\partial B_{\rho_j}$ such that $ {\rm{dist}}\,(x,L_j) \geq a_\eta$, we have that
\beq
W(Z_j(x))\leq \lm_j(\abs{x})W\left( U\left(\rho_j\frac{x}{\abs{x}}\right)\right)+\left(1-\lm_j(\abs{x})\right)W(V_j(x))\quad\mbox{for such an}\;x.
\eeq
 Hence, with an appeal to
 \eqref{w3} and \eqref{w4}, we obtain claim \eqref{clam}.

To show that integral $II$ in \eqref{onetwo} is small, we still must estimate the integral of $\abs{\nabla Z_j}^2.$ Here we note from \eqref{Zgrad} that
\begin{eqnarray}
&&\abs{\nabla Z_j}^2\leq C\left\{\left[\lm_j(\abs{x})\nabla  U\left(\rho_j\frac{x}{\abs{x}}\right)+\left(1-\lm_j(\abs{x})\right)\nabla V_j(x) \right]^2 + \abs{    U\left(\rho_j\frac{x}{\abs{x}}\right)-V_j(x)  }^2\right\}\nonumber\\
&&
\leq C\left\{  \lm_j(\abs{x})\abs{\nabla  U\left(\rho_j\frac{x}{\abs{x}}\right) }^2+\left(1-\lm_j(\abs{x}\right)\abs{\nabla V_j(x)}^2+    \abs{    U\left(\rho_j\frac{x}{\abs{x}}\right)-V_j(x)  }^2                  \right\}.\label{Ztri}
\end{eqnarray}
Integrating this expression over the set 
\[
\{x=(x_1,x_2):\,\rho_j-1<\abs{x}<\rho_j,\; {\rm{dist}}\,(x,L_j) \geq a_\eta,x_1>0\},
\]
we can use \eqref{w3} and \eqref{w4} once again to show that the first two terms in \eqref{Ztri} integrate to $O(\eta)$. 

The third term can be handled in the same manner as was done for the integral of $W(Z_j)$. That is, we split the integral into the set where $U\left(\rho_j\frac{x}{\abs{x}}\right)$ is far from both $p_1$ and $p_2$ and where it is close to one of these wells. We know from \eqref{w11} that $V_j(x)$ is near $p_1$ or $p_2$ for this domain of integration and therefore the measure of the set where $U\left(\rho_j\frac{x}{\abs{x}}\right)$ is far from the wells must be small in order not to violate \eqref{w4}. Then, on any set where it is near  to $p_1$ or $p_2$, we have
\[
\abs{    U\left(\rho_j\frac{x}{\abs{x}}\right)-V_j(x)  }^2\leq 2\left(\abs{    U\left(\rho_j\frac{x}{\abs{x}}\right)-p_\ell }^2+
\abs{    V_j(x)-p_\ell  }^2\right)\quad\mbox{for either}\;\ell=1\;\mbox{or}\;2,
\]
and so the quantity $\abs{    U\left(\rho_j\frac{x}{\abs{x}}\right)-V_j(x)  }^2 $ is controlled by the sum of the integrals of $W\left(U\left(\rho_j\frac{x}{\abs{x}}\right)\right)$ and $W(V_j(x))$. Hence, by \eqref{w3} and \eqref{w4} it must also integrate to $O(\eta)$.

Since the analysis of $I$ leading to \eqref{ione} holds for any $\eta>0$ provided $\rho_j$ is sufficiently large, as does this just completed analysis of integral $II$, we finally conclude that the interpolating sequence $\{Z_j\}$ satisfies the bound
\[
\int_{\{x:\,\rho_j-1<\abs{x}<\rho_j,\;x_1>0\}}W(Z_j)+\frac{1}{2}\abs{\nabla Z_j}^2\,dx\leq c_{12}+o(1).
\]
The argument leading to the same estimate for the energy of $Z_j$ taken over the set
$\{x:\,\rho_j-1<\abs{x}<\rho_j,\;x_1<0\}$ is identical, and so we arrive at the estimate
\beq
E\left(Z_j,B_{\rho_j}\setminus B_{\rho_j - 1}\right)
\leq 2c_{12}+o(1).\label{annulusest}
\eeq
\vskip.1in\noindent
{\bf 4.} Having interpolated between the boundary values of $U$ on $\partial B_{\rho_j}$ and those of $V_j$ on $\partial B_{\rho_j-1}$ with a cost bounded as in \eqref{annulusest}, we can now appeal to the local minimality of $U$ to assert that
\[
E\left(U,B_{\rho_j}\right)\leq 2c_{12}+
E\left(V_j,B_{\rho_j - 1}\right)
+o(1),
\]
where we recall that $V_j$ is defined through \eqref{zetaslanted}.
If we now consider a coordinate system $(z_1,z_2)$ with $z_1$-axis coinciding with the line $L_j$,  $z_2$-axis orthogonal to it, and with origin at the midpoint of the line segment $L_j\cap B_{\rho_j}$, then it follows immediately from the definition of $\zeta_{12}$ and \eqref{zetaslanted} that
\beq
E\left(U,B_{\rho_j}\right) \leq 2c_{12}+\int_{-\frac{1}{2}\mH^1(L_j\cap B_{\rho_j-1})}^{\frac{1}{2}\mH^1(L_j\cap B_{\rho_j-1})} E\left(\zeta_{12},\R\right)\,dz_1 + o(1) = c_{12}\,\mH^1\left(L_j\cap B_{\rho_j}\right)+o(1).
\label{upbd}
\eeq
Here we are using the fact that \[
\mH^1\left(L_j\cap\{x:\;\rho_j-1<\abs{x}<\rho_j\}\right)\leq 2+o(1),
\]
since we recall that the line $L_j$ meets $\partial B_{\rho_j}$ at the points $x_J^+=\rho_je^{i \theta_j^+}$ and $x_j^-=\rho_je^{i(\pi-\theta_j^-)}$ with $\theta_j^+$ and $\theta_j^-$ both approaching zero by \eqref{antipod} and \eqref{antipod2}. See Figure \ref{heteroball}.
\begin{figure}[H]
	\centering
	\includegraphics[width = 0.6\textwidth]{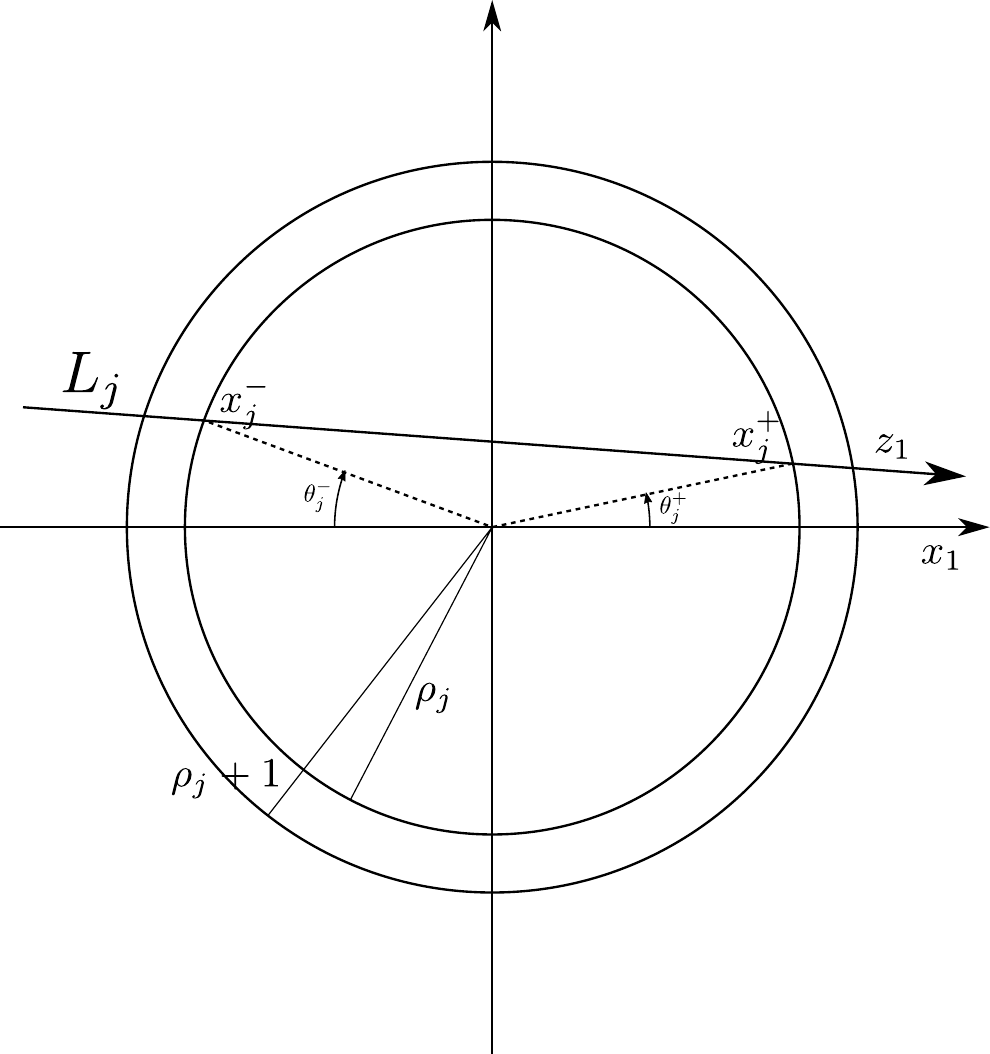}
	\caption{The coordinate system based on the line $L_j$ passing through the points $x_j^+=\rho_je^{i\theta_j^+}$ and $x_j^-=\rho_je^{i(\pi-\theta_j^-)}$ defined in \eqref{antipod} and \eqref{antipod2}.}
	\label{heteroball}
\end{figure}
 \vskip.1in\noindent
 {\bf 5.} We conclude the proof of Proposition \ref{third} with a lower bound for the energy of $U$ on $B_{\rho_j}$ that will contradict the upper bound \eqref{upbd}, thus eliminating the possibility that \eqref{concon} can occur. To this end, we introduce an interpolation, say $\tilde{Z}_j$, on the annulus $\{x:\,\rho_j\leq \abs{x}\leq \rho_j+1\}$ between the values of $U$ on $\partial B_{\rho_j}$ and those of $V_j$ given by \eqref{zetaslanted} on $\partial B_{\rho_j+1}.$
The formula for the sequence $\{\tilde{Z}_j\}$ is identical to that of $\{Z_j\}$ given in \eqref{bigZdefn}, with the exception that $\lm_j(r)$ is now replaced by  $\tilde{\lm}_j(r):=\rho_j+1-r.$ Then the entire argument that led to the energy bound \eqref{annulusest} applies equally well to $\{\tilde{Z}_j\}$, to establish that
\beq
E\left(\tilde{Z}_j,B_{\rho_j+1}\setminus B_{\rho_j}\right)
\leq 2c_{12}+o(1).\label{annulusest2}
\eeq

As a consequence of \eqref{annulusest2}, if we now define the sequence $\{\mathcal{U}_j\}$ on $B_{\rho_j+1}$ via 
\[
\mathcal{U}_j(x):=\left\{\begin{matrix} U(x)&\;\mbox{for}\;x\in B_{\rho_j},\\
\tilde{Z}_j(x)&\;\mbox{for}\;x\in B_{\rho_j+1}\setminus B_{\rho_j},
\end{matrix}\right.
\]
then we have the lower energy bound
\beq
E\left(U,B_{\rho_j}\right)\geq E\left(\mathcal{U}_j,B_{\rho_j+1}\right)
-2c_{12}-o(1).\label{lba}
\eeq
Next we will compute a lower bound for the integral on the right using a coordinate system $(z_1,z_2)$ very similar to the one introduced above \eqref{upbd}, where again $z_1$ is the arc length on the line $L_j$ but now with $z_1 = 0$, $z_2 = 0$ corresponding to the midpoint of $L_j\cap B_{\rho_j+1}$. For any $z_1\in \R$ the set $\{z_2\mid (z_1,z_2)\in B_{\rho_j+1}\}$ is an interval that we denote $\left(a_j(z_1),b_j(z_1)\right)$.

The line $L_j$ is at a distance $\delta_j = \rho_j\abs{\sin\bigg(\frac{\theta_j^- + \theta_j^+}{2}\bigg)}$ from the origin. 
Note that $\delta_j\ll \rho_j$ since $\abs{\theta_j^+}+\abs{\theta_j^-}\to 0.$. It follows that $B_{\rho_j+1}$ contains the set of points with coordinates $(z_1,z_2)$ such that 
\beq\label{zstrip} - \ell_j < z_1 < \ell_j, \quad -h_j(z_1) < z_2 < h_j(z_1),\eeq
where 
\begin{align}\label{zbounds}&\ell_j = \frac12 \mH^1\left(L_j\cap B_{\rho_j+1}\right) = \left((\rho_j+1)^2 - {\delta_j}^2\right)^{1/2}\nonumber\\
\mbox{and}\quad &h_j(z_1) =  \left((\rho_j+1)^2 - {z_1}^2\right)^{1/2} - \delta_j.\end{align}
In particular, 
\beq \label{segbound} h_j(z_1)\le \min\left(\abs{a_j(z_1)},b_j(z_1)\right).\eeq

Then we can write
\beq\label{slicing}E\left(\mathcal{U}_j, B_{\rho_j+1}\right) \geq \int_{B_{\rho_j+1}}\left|\frac{\partial\mathcal U_j}{\partial z}\right|^2+ \int_{-\ell_j}^{\ell_j} I(z_1) \,dz_1,\eeq
where $$I(z_1) = E\left(\mathcal{U}_j(z_1,\cdot),[a_j(z_1),b_j(z_1)]\right).$$
From the construction of $\mathcal{U}_j$ using $\tilde{Z_j}$, we have  that $\mathcal{U}_j(z_1,z_2) = \zeta_{12}(z_2)$ if $(z_1,z_2)\in\partial B_{\rho_j+1}$, that is if $z_2 = a_j(z_1)$ or $z_2 = b_j(z_1)$. Thus we may extend $\mathcal{U}_j(z_1,\cdot)$ continuously by setting $\mathcal{U}_j(z_1,z_2) = \zeta_{12}(z_2)$ if $z_2\notin \left(a_j(z_1),b_j(z_1)\right)$. 

Using the minimizing property of $\zeta_{12}$ with respect to its boundary conditions on any interval $[a,b]$ and its exponential decay as in \eqref{expdecay}, we have for any $z_1\in (-\ell_j,\ell_j)$ that 
\begin{eqnarray} 
I(z_1) &=& E\left(\mathcal{U}_j(z_1,\cdot),\R\right) - E\left(\mathcal{U}_j(z_1,\cdot),\R\setminus[a_j(z_1),b_j(z_1)]
\right)\nonumber\\
&=& E\left(\mathcal{U}_j(z_1,\cdot),\R\right) - E\left(\zeta_{12},\R\setminus[a_j(z_1),b_j(z_1)]\right)\nonumber\\
&\ge& E\left(\zeta_{12},\R\right) - E\left(\zeta_{12},\R\setminus[-h_j(z_1),h_j(z_1)]\right)\ge c_{12} - Ce^{-ch_j(z_1)},\nonumber\\
&\label{lowslice1}
\end{eqnarray}
where $c,C>0$ depend only on $W$.

We integrate \eqref{lowslice1} over $z_1\in (-\ell_j,\ell_j)$. In view of \eqref{slicing} we find that
$$E(\mathcal{U}_j, B_{\rho_j+1}) \ge \int_{B_{\rho_j+1}}\left|\frac{\partial\mathcal U_j}{\partial z_1}\right|^2+ 2 c_{12} \ell_j - C\int_{-\ell_j}^{\ell_j} e^{- c h_j(z_1)}\,dz_1.$$
It is straightforward to show, using \eqref{zbounds} and the fact that $\delta_j\ll \rho_j$ as $\rho_j\to +\infty$, that the last integral above is $o(1)$ and then, since 
$$2\ell_j = \mH^1\left(L_j\cap B_{\rho_j+1}\right) = \mH^1\left(L_j\cap B_{\rho_j}\right)+2+o(1), $$
we conclude that 
$$E(\mathcal{U}_j, B_{\rho_j+1}) \ge \int_{B_{\rho_j+1}}\left|\frac{\partial\mathcal U_j}{\partial z_1}\right|^2+c_{12}\,\mH^1\left(L_j\cap B_{\rho_j}\right)+2c_{12}+ - o(1),$$
In view of \eqref{lba} and the upper bound \eqref{upbd}, we deduce that 
\beq\label{noz}\int_{B_{\rho_j+1}}\left|\frac{\partial\mathcal U_j}{\partial z_1}\right|^2 = o(1).\eeq
 \vskip.1in\noindent
 {\bf 6.} We may now conclude. Going back to the original coordinates $(x_1,x_2)$ we have that $U = \mathcal U_j\circ \varphi_j$ on $B_{\rho_j}$, where $\varphi_j$ denotes the $(x_1,x_2)\to (z_1,z_2)$ map. But, as $j\to +\infty$, the rotational component of $\varphi_j$ converges to the identity since $\theta_j^+$ and $\theta_j^-$ both tend to $0$. Passing to the limit in \eqref{noz} we thus deduce that $U(x_1,x_2)$ does not depend on $x_1$. 
 
 Then, as a function of $x_2$ only, $U$ is a minimizing solution on $\R$, which converges to $p_1$ as $z_2\to -\infty$ and to $p_2$ as $x_2\to +\infty$. Thus there exists $\Delta\in\R$ such that $U(x_1,x_2) = \zeta_{12}(x_2+\Delta).$ The proof of Theorem~\ref{halfplanes} is complete.

 \end{proof}

 \section{Two partitioning problems: Regularity and comparison}\label{lastpf}

Our proof of the key estimate \eqref{keyest}
 in Proposition \ref{equipartition} relies upon the regularity theory for two partitioning problems, one fairly standard and the other perhaps not so standard. In this section we state the regularity theory for these problems, as established in \cite{MNov}. Then we state and prove a result relating the infima of these two problems.

For the convenience of the reader, we restate these two partitioning problems here:
\vskip.1in
\noindent
{\bf Problem 1.} Fix a function $h\in BV(\partial B;P)$. 
For any disjoint sets $S_1,S_2$ and $S_3$  of finite perimeter in $B$ such that
$\abs{B\setminus \cup S_\ell}=0$ define
\[
m_0:=\inf \left\{E_0(S_1,S_2,S_3):\;\cup S_\ell=B,\;\partial S_\ell\cap \partial B=h^{-1}(p_\ell)\;\mbox{for}\;\ell=1,2,3\right\},
\]
where $E_0$ is given by
\[
E_0(S_1,S_2,S_3)=t_1\mH^1(\partial^* S_1\cap B)+t_2\mH^1(\partial^* S_2\cap B)+t_3\mH^1(\partial^* S_3\cap B).
\]
\vskip.1in
\noindent
{\bf Problem 2.} Fix a number $\delta>0.$ Then for $h$ and $E_0$ as above, and disjoint sets $S_1,S_2$ and $S_3$ of finite perimeter in $B$ define
\[
m_0^\delta:=\inf \left\{E_0(S_1,S_2,S_3):\;\abs{B\setminus\cup S_\ell}\leq \delta,\;\partial S_\ell\cap \partial B=h^{-1}(p_\ell)\;\mbox{for}\;\ell=1,2,3\right\}.
\]

Regarding Problem 1, the regularity theory of minimizing planar partitions subject to volume constraints on each phase is developed, for instance, in \cite{Maggi,FM}. For our purposes, however, we require a version valid without volume constraints but subject to a Dirichlet condition, and which has additional properties specific to minimization within a ball. For this we quote the recent work in \cite{MNov}.

\bthm\label{Novack} (\cite{MNov}, Thm. 1.6)
If $(S_1^0,S_2^0,S_3^0)$ be a minimizer of Problem 1, then every connected component of $\partial S_\ell^0 \cap \partial S_m^0 \cap B$ is a line segment terminating at an interior triple junction $x\in \partial S_1^0 \cap \partial S_2^0 \cap \partial S_3^0 \cap B$, at $x\in \partial S_{\ell}^0 \cap \partial S_m^0 \cap \partial B$ for $\ell\neq m$ which is a point of discontinuity of $h$, or at a boundary triple junction $x\in \partial S_{1}^0 \cap \partial S_2^0 \cap S_3^0\cap \partial B$ which is a point of discontinuity of $h$. Moreover, there exists angles $\alpha_\ell$, $\ell=1,2,3$, satisfying \eqref{tj}
such that if $x\in B$ is an interior triple junction, for some 
$r_x>0$, $S_\ell^0\cap B_{r_x}$ for $\ell=1,2,3$ are circular sectors determined by $\alpha_\ell$.
Finally, every connected component $C$ of $S_\ell^0$ is convex and meets $\partial B$ along one or more arcs of $h^{-1}(p_\ell)$.
\ethm
 The last property, namely that every connected component $C$ of $S_\ell^0$ is convex and meets $\partial B$ along one or more arcs of $h^{-1}(p_\ell)$ is not stated in \cite{MNov}, Thm. 1.6, but is immediate since any island of phase could be filled in with a different phase, thereby lowering the total perimeter without disrupting the boundary condition.

Regarding Problem 1, we will also need the following corollary, which follows easily from Theorem \ref{Novack}.
\begin{cor}\label{tripbound}  Suppose $h\in BV(\partial B;P)$ has $k$ jump discontinuities for some non-negative integer $k$. Then there exists an integer $N(k)$ such that the total number of triple junctions appearing in any minimizer of Problem 1 cannot exceed $N(k)$.
\end{cor}
\begin{proof}
 Denote any minimizer of Problem 1 by $u_0$. Since any connected component of $\{u_0=p_\ell\}$ must meet $\partial B$ along one or more connected components of the set of boundary arcs $h^{-1}(p_\ell)$, and since the number of these arcs is necessarily bounded by $k$, we can conclude that for $\ell=1,2$ and $3$, the number of connected components of $\{u_0=p_\ell\}$ cannot exceed $k$ as well.
 Next we note that in light of the convexity of every component of $\{u_0=p_\ell\}$, any triple of components, one each from $\{u_0=p_\ell\}$ for $\ell=1,2,3$, can only meet at a triple junction at most once. Counting up all the possible triples, it follows that the number of triple junctions of a minimizer $u_0$ cannot exceed $k^3$.
\end{proof}

The regularity theory for Problem 2 is more subtle since minimizers will typically develop cusps to replace the triple junctions appearing in the solution of Problem 1, a phenomenon referred to in some literature as a ``wetting" of the singularities, see e.g. \cite{KBFM}. See Figure \ref{drywet}. Here we quote the following result: 
\bthm\label{Novack2} (\cite{MNov}, Thm. 1.4) 
Let $(S_1^\delta,S_2^\delta,S_3^\delta)$ be a minimizer of Problem 2, and denote by $G^\delta:=B\setminus \cup_{\ell=1}^3 S_\ell^\delta$. Then every connected component of $\partial S_\ell^\delta \cap \partial S_j^\delta \cap B$ is a line segment terminating either on $\partial B$ at a point of discontinuity of $h$ between $p_\ell$ and $p_j$ or at a point in $\partial S_\ell^\delta \cap \partial S_j^\delta \cap \partial G^\delta \cap  \overline{B}$. Referring to those points in $\partial S_\ell^\delta \cap \partial S_m^\delta \cap \partial G^\delta \cap  B$ and $\partial S_\ell^\delta \cap \partial S_m^\delta \cap \partial G^\delta \cap  \partial B$ as cusp and corner points, respectively, there exist positive $\kappa_\ell^\delta$ for $\ell=1,2,3$ such that
\begin{align}\label{curvature condition}
    t_1 \kappa_1^\delta=t_2 \kappa_2^\delta=t_3 \kappa_3^\delta 
\end{align}
and, for $\ell=1,2,3$, $\partial S_\ell^\delta \cap \partial G^\delta$ consists of a union of circular arcs of curvature $\kappa_\ell^\delta$, each of whose two endpoints are either a cusp point in $B$ or a corner point in $\partial B$ at a point of discontinuity of $h$. Furthermore, at cusp points, $\partial S_\ell^\delta\cap \partial G^\delta $ and $\partial S_m^\delta\cap \partial G^\delta $ meet $\partial S_\ell^\delta \cap \partial S_m^\delta$ tangentially. Finally, any connected component $C$ of $S_\ell^\delta$ is convex.
\ethm
 Since any admissible partition of $B$ for Problem 1 is also  admissible for Problem 2, it is obvious that $m_0^\delta\leq m_0.$ However, an  inequality in the reverse direction also holds.
\bthm
    \label{almostpartitions}
For any positive integer $k$, let  $h$ be any function in $ BV(\partial B;P)$ having no more than $k$ discontinuities. Then for any $\delta>0$, the infimum $m_0$ for Problem 1 and the infimum $m_0^\delta$ for Problem 2 are related via
\beq
m_0^\delta\geq m_0-\gamma(k)\,\delta^{1/2},
\label{keylb}
\eeq
for some constant $\gamma(k)$. 
\ethm
\begin{figure}[H]
	\centering
	\includegraphics[width = 0.6\textwidth]{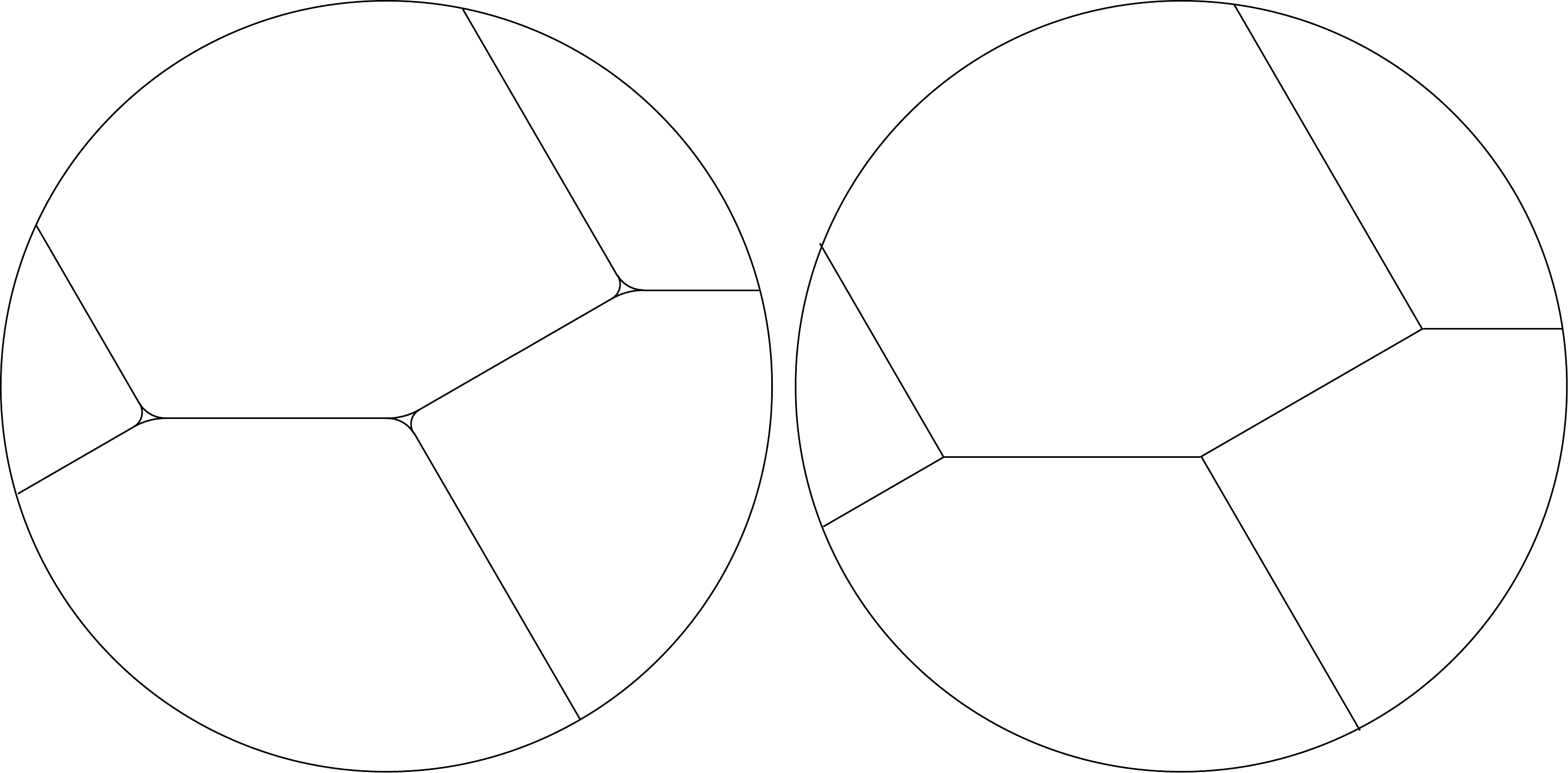}
	\caption{Minimizers of $E_0$ subject to the same Dirichlet condition for Problem 1 (right) and Problem 2 (left).}
	\label{drywet}
\end{figure}

\noindent
{\bf Proof of Theorem \ref{almostpartitions}}. We will view minimization of $E_0$ subject to the given Dirichlet condition on $\partial B$ and subject to the constraint \eqref{deltapart} as a problem of coloring most of $B$ with three colors, where for a minimizer
$\{S_1^\delta,S_2^\delta,S_3^\delta\}$ we rename $S_1^\delta$ as the yellow set $Y^\delta$,  $S_2^\delta$ as the red set $R^\delta$ and $S_3^\delta$ as the blue set $B^\delta$. 
Then the region in $B$ not covered by these sets, namely $G^\delta$, will be referred to as the gray set. Referring back to the formulation \eqref{tform} of the partitioning energy $E_0$, let us now write it as
\beq
E_0(Y^\delta,R^\delta,B^\delta)=c_Y\mH^1(\partial Y^\delta\cap B)+c_R\mH^1(\partial R^\delta\cap B)+c_B\mH^1(\partial B^\delta\cap B),\label{tforma}
\eeq
where we have changed notation to let $c_Y=t_1=$ the cost of `yellow boundary', $c_R=t_2$ and $c_B=t_3$.

Having fixed the boundary data $h\in BV(\partial B;P)$, we have $\partial Y^\delta\cap\partial B=h^{-1}(p_1),\,\partial R^\delta\cap\partial B=h^{-1}(p_2)$ and
$\partial B^\delta\cap\partial B=h^{-1}(p_3)$, and so $\partial B$ is partitioned into a finite number circular arcs, some yellow, some red and some blue, though we make no assumption that necessarily all three colors are present in the boundary data. We recall that we are assuming the total number of these arcs does not exceed $k$. It also follows from the Dirichlet condition that $\partial G^\delta$ meets $\partial B$ only at at most $k$ isolated points, if at all.

The main step in the proof consists of arguing that one can bound the number of components of $G^\delta$ by a constant depending only on $k$. Once this is established, the bound \eqref{keylb} will follow rather easily.

At this point, we will assume, with no loss of generality, that 
\beq
c_Y\leq \min\{c_R,c_B\}.\label{yellow}
\eeq

We now proceed with the proof in four steps. 
\vskip.1in\noindent
{\bf 1.} We first claim that with no loss of generality we may assume every component of $R^\delta$ and every component of $B^\delta$ must meet $\partial B$. That is, there are no islands of red or blue in the interior of $B$. 
This follows since any such island could be changed to yellow, either resulting in a new minimizer in the case of equality in \eqref{yellow}, or else contradicting the minimality of $\{Y^\delta,R^\delta,B^\delta\}$ in the case of strict inequality in \eqref{yellow}. As a consequence, the number of connected components of $R^\delta$ and of $B^\delta$ cannot exceed the number of red and blue boundary components dictated by $h$. In particular, both numbers are bounded by $k$. For the remainder of the argument, we will denote these components via
\beq
R^\delta=R^\delta_1\cup R^\delta_2\cup\ldots\cup R^\delta_{k_1}\quad\mbox{and}\quad
B^\delta=B^\delta_1\cup B^\delta_2\cup\ldots\cup B^\delta_{k_2}\label{redblue}
\eeq
for some integers $k_1=k_1(\delta)$ and $k_2=k_2(\delta)$ such that $k_1+k_2\leq k$.
\vskip.1in\noindent
{\bf 2.} Next we claim that we may assume every component of $G^\delta$ is simply connected. This follows since by Step 1, any non-simply connected component of $G^\delta$ would have one or more components of $Y^\delta$ consisting of full disks lying in its interior. We observe as a consequence of Proposition \ref{Novack2} that the outer boundary component of any component of $G^\delta$  
consists of a union of circular arcs of curvature $\kappa^\delta_Y, \kappa^\delta_R$ or $\kappa^\delta_B$ bowing into $G^\delta$, all meeting tangentially at cusp points, where we have renamed $\kappa^\delta_1$ as $\kappa^\delta_Y$, etc. Then we may shift any interior yellow disk until it touches this outer boundary component at two points without changing the total value of $E_0$, that is, creating a new minimizer. The only obstruction to sliding such an interior yellow disk over to the boundary would be that it first hits another yellow disk, but clearly two yellow disks, tangent at a point, is a non-minimizing configuration and so could not occur. In this manner, any minimizing configuration possessing a non-simply connected component of $G^\delta$ could be replaced by another minimizer having more components of $G^\delta$ than the original, but for which every component of the new $G^\delta$ is simply connected.

\vskip.1in\noindent
{\bf 3.} Our next goal is to bound the number of components of $Y^\delta$ which touch $G^\delta$, in the sense that their boundaries have nonempty intersection. It suffices to bound the number of those components which do not touch the boundary $\partial B$ since there are at most $k$ components which touch $\partial B$.

 Let then $Y^\delta_\ell$ be a yellow component which touches $G^\delta$. As described in Theorem \ref{Novack2}, the boundary of $Y^\delta_\ell$ is $C^1$ and consists of circular arcs and segments separated by points which are cusp singularities of the partition. Moreover, we may assume that  there are at least two such cusp points, for otherwise $\partial Y^\delta_\ell$ minus at most a point would be in a gray component and by sliding $Y^\delta_\ell$ in this gray component, we would obtain a minimizing partition where $Y^\delta_\ell$ has at least two cusp points on its boundary.
 
Let then $p_1$ and $p_2$ be cusp points on $\partial Y_\ell^\delta$, separated by a circular arc $\gamma\subset\partial Y^\delta_\ell\cap G^\delta$ of radius $r = 1/\kappa^\delta_Y$. These points also belong to the boundaries of red or blue components drawn from the collection \eqref{redblue}; call these two components $A_1$ and $A_2$. We claim that given components $A_1$ and $A_2$, each either red or blue, there can be at most two yellow components separated from the gray area by a boundary arc $\gamma$ whose endpoints belong to  $\partial A_1$ and $\partial A_2$, respectively.

We will argue this by first noting that the completion of any such circular arc $\gamma$ into a full circle yields a circle of radius $r = 1/\kappa^\delta_Y$. By Theorem \ref{Novack2}, this circle necessarily meets both $\partial A_1$ and $\partial A_2$ tangentially. We will thus rule out the possibility of there existing three or more such arcs $\gamma$ by showing that there can never exist three balls of the same radius, all exterior to $A_1$ and $A_2$, with all three meeting both $\partial A_1$ and $\partial A_2$ tangentially, unless all three balls have collinear centers. This is an elementary property of convex sets, but not being aware of a reference, we provide a proof. 

We note that necessarily the center of any such ball must be equidistant from $\partial A_1$ and $\partial A_2$. Therefore, for each $t>0$, we consider the possible intersections of the curves $\Gamma_1^t:=\{x:{\rm dist}(x,\partial A_1)=t\}$
and $\Gamma_2^t\:=\{x:{\rm dist}(x,\partial A_2)=t\}$. Since $A_1$
 and $A_2$ are two disjoint, convex sets that have $C^1$ boundaries within $B$, it follows that for all $t>0$, $\Gamma_1^t$ and $\Gamma_2^t$ are convex, closed curves that are also $C^1$ within $B$, and furthermore, $\Gamma_1^t\cap\Gamma_2^t$ are disjoint for $t$ small.

 Now consider the first time $t_1>0$ when $\Gamma_1^{t_1}$ meets $\Gamma_2^{t_1}$. This could happen along a line segment, since we recall that neither curve is necessarily strictly convex. However, for $\Gamma_1^{t_1}\cap\Gamma_2^{t_1}$ to consist of a line segment would mean that $\partial A_1$ and $\partial A_2$ must have boundary components that are parallel line segments. In this case, of course there exists a one-parameter family of circles with this tangency property, but necessarily, their centers all lie on the line segment $\Gamma_1^{t_1}\cap\Gamma_2^{t_1}$; that is, they are collinear.
 
 The other possibility is that at time $t_1$, the intersection $\Gamma_1^{t_1}\cap\Gamma_2^{t_1}$ consists of one point. Then for $0<t-t_1\ll 1$, the convexity of both curves means that the intersection will consist of two points, which in the context of our yellow components, allows for the possibility that two curves representing boundary components of two distinct elements of $Y^\delta$, say, $\partial Y^\delta_{\ell_1}\cap G^\delta$ and $\partial Y^\delta_{\ell_2}\cap G^\delta$, both meet $\partial A_1$ and $\partial A_2.$ This could certainly happen. See Figure \ref{twoyellow}.

 \begin{figure}[H]
	\centering
	\includegraphics[width = 0.6\textwidth]{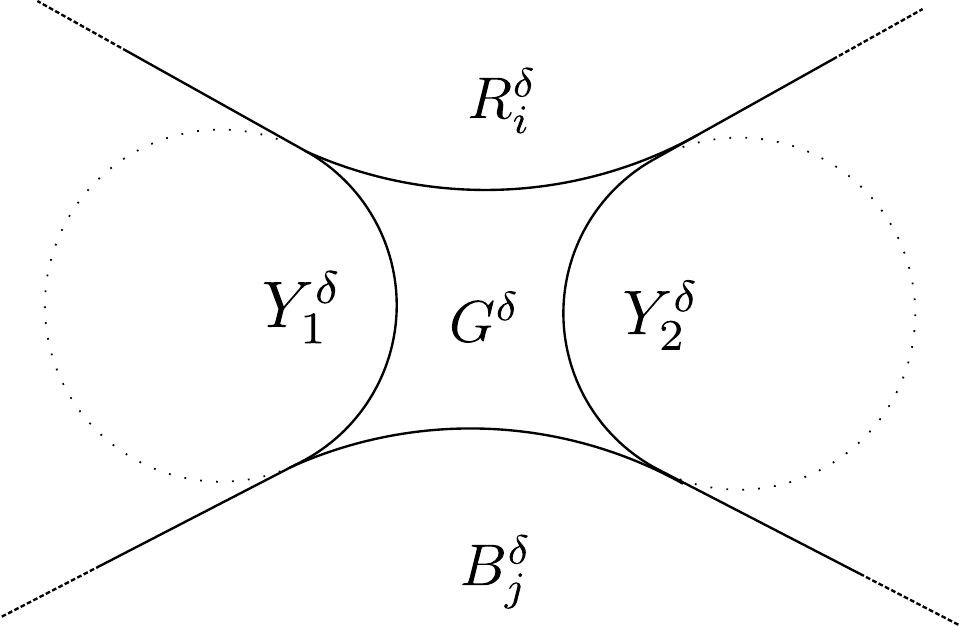}
	\caption{A configuration with two yellow components, both having a circular boundary arc whose endpoints meet the same pair of components from the collection in \eqref{redblue}, resulting in 4 cusp points.}
	\label{twoyellow}
\end{figure}

 However, we claim that as $t$ increases there cannot be more than two intersections of the two curves. We argue by contradiction and suppose that there exists a first time $t_2>t_1$ where $\Gamma_1^{t_2}\cap\Gamma_2^{t_2}$ consists of three points. Two of these points represent the continuous evolution of the original two points that emerged as $t$ passed through $t=t_1$ but at the third point, say $x\in \Gamma_1^{t_2}\cap\Gamma_2^{t_2}$, it must be the case that $\Gamma_1^{t_2}$ and $\Gamma_2^{t_2}$ meet tangentially, this being the first time a third point of intersection emerges. Denoting by $\mathcal{L}$ the line of tangency, it follows from convexity that either both $\{x:{\rm dist}(x,\partial A_1)<t_2\}$ and $\{x:{\rm dist}(x,\partial A_2)<t_2\}$ lie on the same side of $\mathcal{L}$ or they lie on opposite sides. But if they both lie on the same side then tracing back from $x$ a distance $t_2$ along the common inner normal to $\Gamma_1^{t_2}$ and $\Gamma_2^{t_2}$, one would arrive at a point in common to $\partial A_1$ and $\partial A_2$, which is impossible given that they are disjoint. If instead one supposes the two sets lie on opposite sides of $\mathcal{L}$, then that all earlier times, it must have been that
 $\Gamma_1^{t}\cap\Gamma_2^{t}$ was empty, contradicting the fact that the two curves met at the earlier time $t_1$.

Returning to the possibility of boundary components of $\partial A_1$ and $\partial A_2$ consisting of two parallel line segments, we note that any line segment on the boundary of a component corresponds to one and only one common boundary with a component of a different color, so in this context, it would correspond to only one yellow component meeting $A_1$, $A_2$ and $G^\delta$. 
This proves the claim that given components $A_1$ and $A_2$, each either red or blue, there can be at most two yellow components separated from the gray area by a boundary arc $\gamma$ whose endpoints belong to  $\partial A_1$ and $\partial A_2$, respectively.
 This in turn proves that the number of yellow components touching $G^\delta$ but not touching $\partial B$ is at most twice the number of pairs of components of red or blue, chosen from \eqref{redblue}, which is bounded by $2{k\choose 2}$.

\vskip.1in\noindent
{\bf 4.}  Now we turn to the task of bounding the number of components of $G^\delta$.  We will accomplish this by bounding the total number of cusp points. Any component of $G^\delta$ has boundary 
consisting of a union of circular arcs of curvature $\kappa^\delta_Y, \kappa^\delta_R$ or $\kappa^\delta_B$ bowing into $G^\delta$, all meeting tangentially at cusp points. 
 Each of these  circular boundary arcs must be a portion of boundary drawn from of the collection of sets
 \beq
 R^\delta_i\;\mbox{for}\; i\in\{1,2,\ldots,k_1\},\quad  B^\delta_j\;\mbox{for}\;j\in\{1,2,\ldots,k_2\},\;\mbox{or}\;  Y^\delta_\ell\;\mbox{for}\; \ell\in\{1,2,\ldots,k_3\},\label{allsets}
 \eeq 
 with $k_1+k_2+k_3\leq k+2{k\choose 2}$ in light of (1) and (3) above.
 
  Now consider any yellow/red, yellow/blue or red/blue pair taken from this collection, say for instance $R^\delta_1$ and $B^\delta_1$, and suppose $\partial R^\delta_1$ and $\partial B^\delta_1$ each have circular arcs bordering the same component of $G^\delta$ such that these two arcs meet at a particular cusp point. By the convexity of red and blue components, $\partial R^\delta_1\cap \partial B^\delta_1$ must consist solely of a line segment one of whose endpoints is this cusp point. It follows that $\partial R^\delta_1$ and $\partial B^\delta_1$ can both meet the boundary of some other gray component at at most one other cusp point, namely at a cusp point sitting at the other endpoint of their one common boundary segment. The same argument could be made between any pairing of a $Y^\delta_\ell$ with any $R^\delta_i$ or $B^\delta_j$, provided $Y^\delta_\ell$ is not a full disk. On the other hand, if $Y^\delta_\ell$ is a full disk then its intersection with any $R^\delta_i$ or $B^\delta_j$ results in only one cusp point due to the convexity of both sets involved. Estimating crudely,  the total number of yellow/red, yellow/blue or red/blue pairs drawn from the collection \eqref{allsets} is bounded by
 \[{k+2{k\choose 2}\choose 2}.
 \]
  Hence, as just argued, the total number of cusp points in a minimizing configuration $\{Y^\delta,R^\delta,B^\delta\}$ cannot exceed twice this number. But since a closed curve comprised of concave circular arcs requires at least three such arcs, it follows that any component of $\partial G^\delta$ must have at least three cusp points. Thus we can bound the total number of gray boundary components and hence, the total number of gray components, by 
  \beq
  C(k):=\frac{2}{3}{k+2{k\choose 2}\choose 2}.\label{cusptotal}
  \eeq
\vskip.1in\noindent
{\bf 5.} Finally, we are ready to establish inequality \eqref{keylb}. To this end, we now build out of $\{Y^\delta,R^\delta,B^\delta\}$ a competitor in the problem, denoted by Problem 1 at the outset of this section, of minimizing $E_0$ among full partitions of the disk $B$, subject to the Dirichlet condition $h$, by defining
$\tilde{Y}^\delta:=Y^\delta\cup G^\delta$. Then $\{ \tilde{Y}^\delta,R^\delta,B^\delta\}$ competes with the minimizer of this problem, denoted in the statement of Theorem \ref{almostpartitions} by $\{S^0_1,S^0_2,S^0_3\}$, and so we have
\beq
E_0(\tilde{Y}^\delta,R^\delta,B^\delta)\geq E_0(S^0_1,S^0_2,S^0_3),\label{a1}
\eeq
as well as
\beq
 E_0(\tilde{Y}^\delta,R^\delta,B^\delta) \leq E_0(Y^\delta,R^\delta,B^\delta)+c_Y\mH^1(\partial G^\delta).
 \label{a2}
\eeq
Now in light of \eqref{curvature condition} and our bound on the total number of cusp points, any boundary of a component of $G^\delta$ consists of at most $2\,{k+2{k\choose 2}\choose 2}$ circular arcs of radius at most
\[
\max{\left\{\frac{1}{\kappa_R},\frac{1}{\kappa_B}\right\}}=\frac{\max{\{c_R,c_B\}}}{c_Y}\frac{1}{\kappa^\delta_Y}.
\]
Consequently, bounding the length of any arc by the perimeter of the corresponding full circle, we can assert that
\beq
\mH^1(\partial G^\delta)\leq 2\pi C(k)\frac{\max{\{c_R,c_B\}}}{c_Y}\frac{1}{\kappa^\delta_Y}.
\label{a3}
\eeq
Then turning from bounding perimeter to bounding area, we note that the area enclosed by the boundary of any gray component in this minimizing configuration is bounded from below by the area of the smallest possible (simply connected) gray component, namely the one formed by just three arcs arising from the tangential contact of one yellow, one red and one blue arc. This number could of course be computed precisely but for our purposes, it suffices to observe through another appeal to \eqref{curvature condition} that it is 
given by $\alpha \frac{1}{(\kappa^\delta_Y)^2}$ for some positive constant $\alpha=\alpha(c_R,c_B)$, where we have expressed this minimal area in terms of $\kappa^\delta_Y$ though of course, we could have expressed it in terms of either of the other two curvatures as well. Hence, assuming there exists at least one component of $G^\delta$, we have the following estimate on the curvature $\kappa^\delta_Y$:
\beq
\alpha\, \frac{1}{(\kappa^\delta_Y)^2}\leq\abs{G^\delta}\leq \delta.\label{a4}
\eeq
Combining \eqref{a1}-\eqref{a4}, we conclude that
\[
E_0(Y^\delta,R^\delta,B^\delta)\geq E_0(S^0_1,S^0_2,S^0_3)-\gamma(k)\,\delta^{1/2}\quad
\mbox{with}\quad\gamma(k):=\frac{2\pi C(k)}{\sqrt{\alpha}}\max\{c_R,c_B\}.
\]
\qed
\vskip.2in\noindent
{\it Acknowledgments.} P.S. would like to thank Mihai Ciucu, Francesco Maggi and Frank Morgan for helpful suggestions. The research of P.S. was supported by a Simons Collaboration grant 585520, an NSF grant DMS 2106516 and through a visit to Universit\'e Paris-Est Cr\'eteil made possible through the Bezout Chair program. He is grateful for the hospitality shown during this visit.

\bibliographystyle{acm}
\bibliography{EP}

\begin{thebibliography}{10}

\bibitem{ABG}
{\sc Alama, S., Bronsard, L., and Gui, C.}
\newblock Stationary layered solutions in {${\bf R}^2$} for an {A}llen-{C}ahn
  system with multiple well potential.
\newblock {\em Calc. Var. Partial Differential Equations 5}, 4 (1997),
  359--390.

\bibitem{Alikexp}
{\sc Alikakos, N.~D.}
\newblock On the structure of phase transition maps for three or more
  coexisting phases.
\newblock In {\em Geometric partial differential equations}, vol.~15 of {\em
  CRM Series}. Ed. Norm., Pisa, 2013, pp.~1--31.

\bibitem{AF1}
{\sc Alikakos, N.~D., and Fusco, G.}
\newblock On the connection problem for potentials with several global minima.
\newblock {\em Indiana Univ. Math. J. 57}, 4 (2008), 1871--1906.

\bibitem{AF2}
{\sc Alikakos, N.~D., and Fusco, G.}
\newblock Entire solutions to equivariant elliptic systems with variational
  structure.
\newblock {\em Arch. Ration. Mech. Anal. 202}, 2 (2011), 567--597.

\bibitem{AFS}
{\sc Alikakos, N.~D., Fusco, G., and Smyrnelis, P.}
\newblock {\em Elliptic systems of phase transition type}, vol.~91 of {\em
  Progress in Nonlinear Differential Equations and their Applications}.
\newblock Birkh\"{a}user/Springer, Cham, 2018.

\bibitem{AZ2}
{\sc Alikakos, N.~D., and Geng, Z.}
\newblock On the triple junction problem on general surface tension
  coefficients.
\newblock {\em arXiv 2304.13106\/} (2023).

\bibitem{AZ}
{\sc Alikakos, N.~D., and Geng, Z.}
\newblock On the triple junction problem on the plane without symmetry
  hypothesis.
\newblock {\em arXiv 2210.17134\/} (2023).

\bibitem{baldo}
{\sc Baldo, S.}
\newblock Minimal interface criterion for phase transitions in mixtures of
  {C}ahn-{H}illiard fluids.
\newblock {\em Ann. Inst. H. Poincar\'{e} Anal. Non Lin\'{e}aire 7}, 2 (1990),
  67--90.

\bibitem{BFS2}
{\sc Bates, P., Fusco, G., and Smyrnelis, P.}
\newblock Entire solutions with six-fold junctions to elliptic gradient systems
  with triangle symmetry.
\newblock {\em Adv. Nonlinear Stud. 13}, 1 (2013), 1--11.

\bibitem{BFS1}
{\sc Bates, P.~W., Fusco, G., and Smyrnelis, P.}
\newblock Multiphase solutions to the vector {A}llen-{C}ahn equation:
  crystalline and other complex symmetric structures.
\newblock {\em Arch. Ration. Mech. Anal. 225}, 2 (2017), 685--715.

\bibitem{Bethuel}
{\sc Bethuel, F.}
\newblock Asymptotics for two-dimensional vectorial {A}llen-{C}ahn systems.
\newblock {\em {\rm{to appear}}, Acta Math.\/} (2023).

\bibitem{KBFM}
{\sc Brakke, K., and Morgan, F.}
\newblock Instability of the wet $ x$ soap film.
\newblock {\em J. Geom. Anal. 8}, 5 (1998), 749--767.

\bibitem{BGS}
{\sc Bronsard, L., Gui, C., and Schatzman, M.}
\newblock A three-layered minimizer in {${\bf R}^2$} for a variational problem
  with a symmetric three-well potential.
\newblock {\em Comm. Pure Appl. Math. 49}, 7 (1996), 677--715.

\bibitem{FT}
{\sc Fonseca, I., and Tartar, L.}
\newblock The gradient theory of phase transitions for systems with two
  potential wells.
\newblock {\em Proc. Roy. Soc. Edinburgh Sect. A 111}, 1-2 (1989), 89--102.

\bibitem{Fusco}
{\sc Fusco, G.}
\newblock Minimizing under relaxed symmetry constraints: Triple and
  {N}-junctions.
\newblock {\em Annali Scuola Normale Superiore, classe di Scienze\/} (2022).

\bibitem{Gaz}
{\sc Gazoulis, D.}
\newblock On the $\gamma$-convergence of the {A}llen-{C}ahn functional with
  boundary conditions.
\newblock {\em arXiv 2301:07458\/} (2023).

\bibitem{giusti}
{\sc Giusti, E.}
\newblock {\em Minimal surfaces and functions of bounded variation}, vol.~80 of
  {\em Monographs in Mathematics}.
\newblock Birkh\"{a}user Verlag, Basel, 1984.

\bibitem{GS}
{\sc Gui, C., and Schatzman, M.}
\newblock Symmetric quadruple phase transitions.
\newblock {\em Indiana Univ. Math. J. 57}, 2 (2008), 781--836.

\bibitem{KS}
{\sc Kohn, R.~V., and Sternberg, P.}
\newblock Local minimisers and singular perturbations.
\newblock {\em Proc. Roy. Soc. Edinburgh Sect. A 111}, 1-2 (1989), 69--84.

\bibitem{Maggi}
{\sc Maggi, F.}
\newblock {\em Sets of finite perimeter and geometric variational problems},
  vol.~135 of {\em Cambridge Studies in Advanced Mathematics}.
\newblock Cambridge University Press, Cambridge, 2012.
\newblock An introduction to geometric measure theory.

\bibitem{MS}
{\sc Monteil, A., and Santambrogio, F.}
\newblock Metric methods for heteroclinic connections in infinite-dimensional
  spaces.
\newblock {\em Indiana Univ. Math. J. 69}, 4 (2020), 1445--1503.

\bibitem{FM}
{\sc Morgan, F.}
\newblock Immiscible fluid clusters in {${\bf R}^2$} and {${\bf R}^3$}.
\newblock {\em Michigan Math. J. 45}, 3 (1998), 441--450.

\bibitem{Novack}
{\sc Novack, M.~R.}
\newblock Dimension reduction for the {L}andau--de {G}ennes model: the
  vanishing nematic correlation length limit.
\newblock {\em SIAM J. Math. Anal. 50}, 6 (2018), 6007--6048.

\bibitem{MNov}
{\sc Novack, M.~R.}
\newblock Regularity for minimizers of a planar partitioning problem with
  cusps.
\newblock {\em arXiv 2305.11865\/} (2023).

\bibitem{ORS}
{\sc Owen, N.~C., Rubinstein, J., and Sternberg, P.}
\newblock Minimizers and gradient flows for singularly perturbed bi-stable
  potentials with a {D}irichlet condition.
\newblock {\em Proc. Roy. Soc. London Ser. A 429}, 1877 (1990), 505--532.

\bibitem{PSARMA}
{\sc Sternberg, P.}
\newblock The effect of a singular perturbation on nonconvex variational
  problems.
\newblock {\em Arch. Rational Mech. Anal. 101}, 3 (1988), 209--260.

\bibitem{PSRocky}
{\sc Sternberg, P.}
\newblock Vector-valued local minimizers of nonconvex variational problems.
\newblock {\em Rocky Mountain J. Math. 21}, 2 (1991), 799--807.
\newblock Current directions in nonlinear partial differential equations
  (Provo, UT, 1987).

\bibitem{SZi}
{\sc Sternberg, P., and Zeimer, W.~P.}
\newblock Local minimisers of a three-phase partition problem with triple
  junctions.
\newblock {\em Proc. Roy. Soc. Edinburgh Sect. A 124}, 6 (1994), 1059--1073.

\bibitem{SZun}
{\sc Zuniga, A., and Sternberg, P.}
\newblock On the heteroclinic connection problem for multi-well gradient
  systems.
\newblock {\em J. Differential Equations 261}, 7 (2016), 3987--4007.

\end{thebibliography}

\end{document}